\newcommand {\gr}[1]{#1} 
\newcommand {\sgp}[1]{\mathcal{#1}} 
\newcommand {\vn}[1]{\mathcal{#1}} 
\newcommand {\cstar}[1]{\mathcal{#1}} 
\newcommand {\corr}[1]{#1} 
\newcommand {\hilb}[1]{#1} 
\newcommand {\sps}[1]{#1} 
\newcommand {\alp}[1]{\mathcal{#1}} 
\newcommand{\bdd}[1]{\mathcal{B}(\hilb{#1)}} 
\newcommand{\adj}[1]{\mathcal{L}(#1)} 
\newcommand{\set}[1]{\{#1\}}
\newcommand{\setind}[3]{\set{{#1}_{#2}}_{#2\in \sgp{#3}}} 
\newcommand{\dotimes}{\otimes\dots\otimes}
\newcommand{\comm}[1]{}
\newcommand{\norm}[1]{\lVert#1\rVert}
\newcommand{\restr}[2]{{
  \left.\kern-\nulldelimiterspace 
  #1 
  \vphantom{\big|} 
  \right|_{#2} 
  }}
\def \tild {\widetilde}
\newtheorem*{claim*}{Claim}
\newtheorem{secLemma}{Lemma}[section]
\newtheorem{subsecLemma}{Lemma}[subsection]
\newtheorem*{Lemma*}{Lemma}
\newtheorem{secTheorem}[secLemma]{Theorem}
\newtheorem{subsecTheorem}[subsecLemma]{Theorem}
\newtheorem*{theorem*}{Theorem}
\newtheorem{secProp}[secLemma]{Proposition}
\newtheorem{subsecProp}[subsecLemma]{Proposition}
\newtheorem*{Prop*}{Proposition}
\newtheorem{secCor}[secLemma]{Corollary}
\newtheorem{subsecCor}[subsecLemma]{Corollary}
\newtheorem*{Cor*}{Corollary}
\newtheorem*{question*}{Question}
\theoremstyle{definition}
\newtheorem{secDef}[secLemma]{Definition}
\newtheorem{subsecDef}[subsecLemma]{Definition}
\newtheorem*{Def*}{Definition}
\newtheorem{secObs}[secLemma]{Observation}
\newtheorem*{Obs*}{Observation}
\newtheorem*{Fact*}{Fact}
\newtheorem{subsecRemark}[subsecLemma]{Remark}
\newtheorem*{Remark*}{Remark}
\newtheorem*{Notation*}{Notation}
\numberwithin{equation}{section}
\begin{document}
\title{Dilations of CP-maps Commuting According to a Graph}
\author{Alexander Vernik}
\date{November 5, 2014}
\address[Alexander Vernik]{Department of Mathematics, Faculty of Natural Sciences, Ben-Gurion University of the Negev}
\email{vernika@math.bgu.ac.il}
\maketitle
\begin{abstract}
	We study dilations of finite tuples of normal, completely positive and completely contractive maps (which we call CP-maps) acting on a von Neumann algebra, and commuting according to a graph $\gr{G}$. We show that if $\gr{G}$ is acyclic, then a tuple commuting according to it has a simultaneous *-endomorphic dilation, which also commutes according to $\gr{G}$. Conversely, if $\gr{G}$ has a cycle, we exhibit an example of a tuple of CP-maps commuting according to $\gr{G}$, which does not have an *-endomorphic dilation commuting according to $\gr{G}$. To achieve these results we use dilation theory of representations of subproduct systems, as introduced and studied by Shalit and Solel. In the course of our investigations we also prove some results about those kinds of subproduct systems which arise from CP-maps commuting according to to a graph.
\end{abstract}
\maketitle
\section{Introduction} \label{introduction}
	\subsection{Background}
	
		In the classic dilation theory of contractions on Hilbert spaces, the most basic theorem is Sz.-Nagy's dilation theorem \cite{Nagy1953}, which asserts that every contraction has a unitary dilation. Specifically, if $\hilb{H}$ is a Hilbert space, and $T\in\bdd{H}$ a contraction, then there is a Hilbert space $\hilb{K}\supseteq{H}$ and a unitary $U\in\bdd{K}$, such that for any $n\in\mathbb{N}$, 
		\[
		P_{\hilb{H}}\restr{U^n}{\hilb{H}} = T^n.
		\]
		In \cite{Ando1963}, Ando showed that one can obtain this result simultaneously for two commuting contractions. That is, if $S,T\in\bdd{H}$ commuting contractions, then there is a Hilbert space $\hilb{K}\supseteq{H}$ and unitaries $V,U\in\bdd{K}$, such that for all $n,m\in\mathbb{N}$,
		\[
		P_{\hilb{H}}\restr{V^nU^m}{\hilb{H}} = S^nT^m.
		\]
		For three contractions, the famous example of Parrott \cite{parrott1970} shows that this may not hold. Parrott exhibited three commuting contractions which do not have a commuting unitary dilation.
		
		A natural question is, given a specific set of commutation relations, will a tuple of contractions which satisfy these relations necessarily have a unitary dilation, which will also satisfy the same relations? To properly state this, we encode commutation relations in a (finite) graph in the following way.
		
		Let $\gr{G}=(V,E)$ be a finite graph. Let $\mathbf{F} = \set{f_v:v\in V}$ be a tuple of maps $f_v:X\rightarrow X$ on some set $X$. We say $\mathbf{F}$ \emph{commutes according to $\gr{G}$} if whenever $\set{u,v}\in E$, $f_u\circ f_v = f_v\circ f_u$ holds.
		
		It is also worthwhile to define precisely what we mean by a dilation of a tuple of contractions in this situation.
		For a tuple $\mathbf{T} = \set{T_v:v\in V}\subseteq\bdd{H}$ of contractions, we say that $\mathbf{U} = \set{U_v:v\in V}\subseteq\bdd{K}$ is a dilation of $\mathbf{T}$ if $\hilb{H}\subseteq\hilb{K}$, and for any choice $v_1\dots v_n\in V$, we have that
		\[
		P_{\hilb{H}} U_{v_1}\dots \restr{U_{v_n}}{\hilb{H}} = T_{v_1}\dots T_{v_n}.
		\]
		
		So the question can be formulated thusly: 
		\begin{question*}
			What conditions on $\gr{G}$ ensure that every tuple of contractions commuting according to it will have a unitary dilation, which will also commute according to $\gr{G}$? 
		\end{question*}
			Clearly, Sz.-Nagy's and Ando's theorems indicate that this is ensured when $\gr{G}$ is either one point, or two points connected by an edge, and Parrott's example states that this is not true for a clique of size 3. In \cite{Opvela2006} Opela answers this with the following theorem:
		\begin{subsecTheorem}[Opela]
			Let $\gr{G}$ be a finite graph. If $\gr{G}$ is acyclic, then every tuple of contractions $\mathbf{T}\subseteq\bdd{H}$ which commutes according to $\gr{G}$ has a unitary dilation, which commutes according to $\gr{G}$. Conversely, if $\gr{G}$ has a cycle, then there is a Hilbert space $\hilb{H}$ and a tuple of contractions on it, which does not have a unitary dilation commuting according to $\gr{G}$.
		\end{subsecTheorem}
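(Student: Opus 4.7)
The plan is to prove the two implications separately: the acyclic direction by induction on $|V|$, and the converse by constructing an explicit counterexample.

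For the acyclic direction, the base case $|V|=1$ is exactly Sz.-Nagy's theorem. For the inductive step, observe that any finite acyclic graph contains a leaf, i.e.\ a vertex $v$ of degree at most one. Let $w$ be its unique neighbor when it exists, and set $G' = G\setminus\{v\}$; this is again finite and acyclic with one fewer vertex. By the induction hypothesis the tuple $\{T_u\}_{u\neq v}$ admits a unitary dilation $\{U_u\}_{u\neq v}$ on some $K \supseteq H$ that commutes according to $G'$. I then need to produce a unitary $U_v$ on some $K' \supseteq K$ that dilates $T_v$, commutes with the extension of $U_w$, and to extend the other $U_u$ to $K'$ without disturbing the already-established relations. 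The latter is automatic: extending each $U_u$ on $K' \ominus K$ by any unitary (for instance the identity) preserves both the dilation property and all the pairwise commutations among the $U_u$. So the whole induction step reduces to a two-operator lifting lemma: given a fixed unitary $U_w$ on $K\supseteq H$ dilating $T_w$, and a contraction $T_v$ on $H$ with $T_vT_w=T_wT_v$, find $K'\supseteq K$ and a unitary $U_v$ on $K'$ that dilates $T_v$ and commutes with the extension of $U_w$. This is an Ando-type / commutant lifting statement and is the analytic heart of the argument.

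For the converse, assume $G$ contains a cycle $v_1,v_2,\ldots,v_n,v_1$ with $n\geq 3$. The construction is to place a Parrott-type family of operators on the cycle vertices and assign the identity (or a scalar) to every vertex outside the cycle, so that all commutation relations away from the cycle are trivially satisfied. For $n=3$ this is Parrott's original example of three mutually commuting contractions without a commuting unitary dilation. For general $n$ one would take an analogous finite-dimensional family commuting only along the cycle, designed so that the only nontrivial commutation constraints are the cyclic ones, and then rule out the existence of a joint unitary dilation commuting according to the cycle by following the putative dilations around the cycle and deriving a contradictory scalar or norm identity — an identity that the original contractions fail.

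The principal obstacle is the two-operator lifting lemma in the inductive step of the acyclic direction. Ando's theorem produces a \emph{joint} commuting unitary dilation of two commuting contractions, but here the dilation of $T_w$ is \emph{prescribed in advance} by the induction hypothesis, so one needs a more refined commutant lifting statement that takes $U_w$ as input and outputs a commuting unitary dilation of $T_v$ on a further enlargement. Once this key lemma is in hand, the remainder of the acyclic direction is a clean induction by peeling off leaves, and the converse reduces to writing down the right finite-dimensional obstruction generalizing Parrott's construction to cycles of arbitrary length.
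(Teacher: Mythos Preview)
The paper does not prove this theorem; it is quoted in the introduction as Opela's result \cite{Opvela2006} and used only as motivation for the CP-map analogue. So there is no ``paper's own proof'' to compare against. That said, your outline deserves comment on its own merits.

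Your strategy for the acyclic direction --- induction by peeling off a leaf, reducing to a two-operator lifting lemma in which one unitary is prescribed in advance --- is exactly the shape of Opela's argument, and you correctly identify the lifting lemma as the analytic core. You have not proved it, but you are candid about that; the lemma is indeed a consequence of the commutant lifting theorem combined with a passage from isometric co-extensions to unitary dilations, and the paper itself proves a structurally identical lifting lemma (Lemma~\ref{liftingLemma}) in the product-system setting.

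The converse direction has a genuine gap. Your phrase ``place a Parrott-type family on the cycle vertices'' is too vague to be a proof, and the most natural reading of it fails. If $n\geq 4$ and you put Parrott's three commuting contractions $A,B,C$ on three consecutive cycle vertices $v_1,v_2,v_3$ and the identity elsewhere, the resulting tuple \emph{does} admit a unitary dilation commuting according to the $n$-cycle: taking $U_4=\cdots=U_n=I$ reduces the problem to dilating $(A,B,C)$ with only the relations $U_1U_2=U_2U_1$ and $U_2U_3=U_3U_2$ imposed --- but that is commutation according to the path $v_1\!-\!v_2\!-\!v_3$, which is acyclic, so such a dilation exists by the first half of the theorem. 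Thus Parrott's example cannot be transplanted naively onto longer cycles. Opela's actual construction for $n\geq 4$ is genuinely different and uses operators tailored to the cycle length; your proposal gives no indication of what these operators should be or how the contradiction ``following the putative dilations around the cycle'' is derived. This is the missing idea.
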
		
			We next turn our attention to dilation theory of CP-maps, which is what this paper is concerned with. A CP-map is a completely contractive, completely positive, normal map $\varphi:\vn{M}\rightarrow\vn{M}$, where $\vn{M}$ is a von Neumann algebra. We will always consider our von Neumann algebras as concrete subalgebras of $\bdd{H}$. We concern ourselves with *-endomorphic dilations of such maps, which are called E-dilations.
			
			Results analogous to Sz.-Nagy's and Ando's theorem are known: For a single CP-map, it was shown that an E-dilation always exists. For $\vn{M} = \bdd{H}$, this was first shown by Bhat in \cite{Bhat1996}
			and then by SeLegue in his PHD thesis \cite{Selegue1997}. For a general $\vn{M}$, this was shown by Bhat and Skeide \cite{BhatSkeide2000}, and by Muhly and Solel \cite{Muhly2002}. 
			It is worth noting that these results were actually given in these papers for the continuous one parameter case, that is, it was shown that there exists an E-dilation for every CP-semigroup indexed by $\mathbb{R}_+$.
			
			For two commuting CP-maps, the existence of E-dilations was shown again by Bhat \cite{Bhat1998} for the case of $\vn{M} = \bdd{H}$, and later by Solel \cite{Solel2006} for arbitrary $\vn{M}$.
			An analogue of Parrott's example was also described by Shalit and Solel in \cite{Shalit2009} (an example with unital maps was given by Shalit and Skeide in \cite{ShalitSkeide2011}). In the continuous case, dilation theory of two parameter CP-semigroups was studied by Shalit in \cite{Shalit2008} and \cite{Shalit2011}.
			
			It is natural, then, to ask whether Opela's result has an analogue in this theory. That is the main motivation of this work, and is answered in the affirmitive in Section \ref{cp maps section}. To show this, we use the connections between the theory of CP-semigroups and subproduct systems and their representations, introduced by Shalit and Solel in \cite{Shalit2009}.
			
			A \emph{CP-semigroup} is a collection $\mathbf{\varphi} = \set{\varphi_{s}: s\in\sgp{S}}$ of CP-maps, where $\sgp{S}$ is a semigroup, and $\mathbf{\varphi}$ satisfies $\varphi_{st} = \varphi_s\circ\varphi_t$, for all $s,t\in\sgp{S}$. We say $\mathbf{\varphi}$ is indexed by $\sgp{S}$. In the non discrete case, such as $\sgp{S} = \mathbb{R}_+$, additional continuity conditions may be imposed. However, throughout this work we shall only consider discrete cases.
			
			Given a tuple of $k$ commuting CP-maps, it is easy to construct from it a CP-semigroup indexed by $\mathbb{N}^k$. Given a tuple which commutes according to some graph $\gr{G}$, one must first construct the indexing semigroup $\sgp{S}(\gr{G})$. $\sgp{S}(\gr{G})$ will be the free semigroup with generators $V(\gr{G})$ (that is, words over the alphabet $V(\gr{G})$), modulo the commutation relations encoded by $\gr{G}$. The precise construction is given in the next section, but the gist of it is that elements of $\sgp{S}(\gr{G})$ have different presentations as finite strings of "letters" from $V(\gr{G})$, which are governed by the commutation relations $\gr{G}$ imposes. For example, if $\gr{G}$ is the graph $a-b-c$, then a typical word in $\sgp{S}(\gr{G})$ will be $s = abcb = bacb = babc = bbac = \dots$.
			
			An E-dilation of a CP-semigroup $\varphi$, is a CP-semigroup $\theta$, consisting of $*$- endomorphisms, acting on a von Neumann algebra $\vn{R}\supseteq\vn{M}$, such that $\vn{M} = p\vn{R}p$ for a projection $p\in\vn{R}$. It is required to satisfy 
			\[
			p\theta_s(a)p = \varphi_s(pap)
			\]
			for all $a\in\vn{R}$ and $s\in\sgp{S}$. The results we have listed before, analoguous to Sz.-Nagy's and Ando's theorem, are thus stating in this notation that CP-semigroups indexed by $\mathbb{N},\mathbb{N}^2$ have E-dilations. This paper is mainly concerned with proving the same for $\sgp{S}(\gr{G})$, for an acyclic graph $\gr{G}$.
			
	\subsection{Subproduct systems}
	
		Subproduct systems were introduced and studied by Shalit and Solel \cite{Shalit2009}. A subproduct system indexed by a semigroup $\sgp{S}$, is a collection of $C^*$-correspondences $\set{\sps{X}_s: s\in\sgp{S}}$, such that $\sps{X}_{st}$ embeds in $\sps{X}_s\otimes \sps{X}_t$, for all $s,t$, and these embeddings satisfy an associativity condition. Specifically, there are co-isometric bimodule maps, called the product maps, $U_{s,t}:\sps{X}_s\otimes \sps{X}_t \rightarrow \sps{X}_{st}$, which are associative in the sense that $U_{st,r}\circ U_{s,t}\otimes I = U_{s,tr}\circ I\otimes U_{t,r}$ for all $s,t,r$. The fibers $\sps{X}_s$ are $C^*$-correspondences (or $W^*$-correspondences, as would be the case in this paper) over $\sps{X}_e$ (with $e$ the neutral element in $\sgp{S}$), which is required to be a $C^*$ (or von Neumann) algebra, and the product maps $U_{s,e},U_{e,s}$ are taken to be the (left and right) action of $\sps{X}_e$.

		If $U_s$ are unitary, then this is a product system. Product systems (of Hilbert spaces) were first introduced by Arveson in his study and classification of E-semigroups (indexed by $\mathbb{R}_+$) on $\bdd{H}$ (See Arveson's monograph \cite{Arveson2003} and Skeide's survey paper \cite{Skeide2008}). Product systems were used by Bhat and Skeide \cite{BhatSkeide2000} and by Muhly and Solel \cite{Muhly2002} to prove that CP-semigroups indexed by $\mathbb{R}_+$ (and by $\mathbb{N}$) have E-dilations. They were also used by Solel \cite{Solel2006} to construct E-dilations for CP-semigroups indexed by $\mathbb{N}^2$.
			
		In \cite{Shalit2009}, Shalit and Solel associated to each CP-semigroup a subproduct system, coupled with a representation, which they called the Arveson-Stinespring subproduct system, with the identity representation. From this construction it is possible to retrieve the original CP-semigroup, and furthermore, isometric dilations of the identity representation correspond to E-dilations of the original CP-semigroup. This correspondence allows one to translate questions about CP-semigroups into questions about subproduct systems and their representation.
			
		It is worth noting that subproduct system were introduced independently, under the name \emph{inclusion systems}, in the paper by Bhat and Mukherjee \cite{BhatMukherjee2010}, simultaneously to Shalit and Solel's work.		
	\subsection{Overview of the paper}
		Section \ref{embedding section} is concerned with embedding subproduct systems in product systems. Specifically, we deal with subproduct systems indexed by a semigroup $\sgp{S}(\gr{G})$, which is associated to a graph $\gr{G}$. If $\gr{G}$ is without 3-cycles, we prove that any such subproduct system may be embedded in a product system. In particular this is true for acyclic graphs, which are our main concern.
		
		The construction itself is straightforward. The main difficulty is in showing that the product maps we define are associative. This turns out to be true because of combinatorial reasons, and the proof is given in full detail in the appendix.
		
		In Section \ref{dilation section}, we deal with product systems and their representations. We define the isometric dilation property. A semigroup $\sgp{S}$ has the isometric dilation property if any representation of a product system indexed by it has an isometric dilation. It is known that $\mathbb{N}$ has this property (this follows from a result in \cite{Muhly2002}), and in \cite{Solel2006} it is also shown for $\mathbb{N}^2$. We use these results to prove that if $\gr{G}$ is acyclic, then $\sgp{S}(\gr{G})$ has the isometric dilation property.
		
		Section \ref{cp maps section} ties these two results together into a result about CP-semigroups. Namely, we show that any CP-semigroup indexed by $\sgp{S}(\gr{G})$, where $\gr{G}$ is acyclic, has and E-dilation. Roughly, one takes a CP-semigroup indexed by $\sgp{S}(\gr{G})$ and constructs the Arveson-Stinespring subproduct system $\sps{X}$ and identity representation $T$, associated to $\varphi$. $\sps{X}$ is then embedded into a product system $\sps{Y}$ via a co-isometric morphism $V:\sps{Y}\rightarrow \sps{X}$. It then follows that $T\circ V$ is in fact a dilation of $T$. Using the result of Section \ref{dilation section}, one is able to further dilate it to an isometric representation, which in turn yields an E-dilation of $\varphi$.
		
		In the converse direction, we construct an example, for any graph $\gr{G}$ with a cycle, of a CP-semigroup indexed by $\sgp{S}(\gr{G})$ which does not have an E-dilation. We follow one of the ways Shalit and Solel constructed an example of three commuting CP-maps with no simultaneous E-dilations in \cite{Shalit2009}. They used Parrott's example to construct a product system (indexed by $\mathbb{N}^3$) and representation which cannot be isometrically dilated, and concluded, by results that they have shown in the same paper, that the CP-semigroup associated to it cannot have an E-dilation. We do the same, only instead of Parrott's example, we use the example Opela constructed of a tuple of contractions commuting according to $\gr{G}$ (which we assume to have a cycle), which does not have a unitary dilation commuting according to $\gr{G}$. Actually, we need such a tuple to not have an isometric co-extension, but we show that the existence of such a tuple of contractions follows from Opela's example.
\section{Preliminaries} \label{preliminaries}
	\subsection{The free semigroup with relations in $\gr{G}$}
	
		Let $\gr{G}$ be a graph with vertices $V(\gr{G})$ and edges $E(\gr{G})$. Then we define $\sgp{R}$ to be the free unital semigroup over $V(\gr{G})$. That is, $\sgp{R}$ is the set of finite sequences of elements in $V(\gr{G})$ (words), with $e$ being the empty word, and the binary associative action given by concatenation.
		
		We define an equivalence relation on $\sgp{R}$ in the following way. First, for two words $r,s\in \sgp{R}$, we will say that $r\sim s$ if $r=r_0v_1v_2r_1$ and $s=r_0v_2v_1r_1$, where $\set{v_1,v_2}\in E(\gr{G})$, and $r_0,r_1$ are some words. That is, if we think of the edges in $\gr{G}$ as commutation relations between the vertices, then we are saying that $r\sim s$ if $r$ can be rewritten into $s$ by applying a commutation relation once. We extend the definition of $\sim$ by taking its reflexive transitive closure. What we get overall is that $r\sim s$ iff $r$ can be rewritten into $s$ by applying a finite sequence of commutation relations. This is easily seen to be an equivalence relation on $\sgp{R}$.
		\begin{subsecDef}
		$\sgp{S}(\gr{G}):= \sgp{R}/\sim$ is called \emph{the free semigroup with relations in $\gr{G}$}.
		\end{subsecDef}
		For convenience, we will also call elements of $\sgp{S}(\gr{G})$ words.
		Observe that $\sgp{S}(\gr{G})$ has a semigroup structure, since concatenation of two equivalence classes via the concatenation of their representatives is well defined. Also, the equivalence class of the empty word of $\sgp{S}(\gr{G})$ is a neutral element for $\sgp{S}(\gr{G})$. 
		
		We call $V(\gr{G})$ the \emph{alphabet} of $\sgp{S}(\gr{G})$, and its elements \emph{letters}. We will drop the distinction between letters and their equivalence classes with respect to $\sim$, as the equivalence class of a letter is a singleton. We say $s_1s_2\dots s_n$ is a \emph{presentation} of $s\in \sgp{S}$ if $s=s_1s_2\dots s_n$ and $s_1,s_2,\dots,s_n$ are in the alphabet of $\sgp{S}(\gr{G})$. We observe that every presentation of a word in $\sgp{S}(\gr{G})$ has the same number of generators, and thus there is a well defined notion of length in $\sgp{S}(\gr{G})$.
	
	\subsection{$C^*$ and $W^*$ correspondences}
	
		\label{CWcorr}
		Let $\cstar{A}$ be a $C^*$-algebra.
		\begin{subsecDef}
		A $C^*$\emph{-correspondence} $\corr{E}$ over $\cstar{A}$ is a right Hilbert $C^*$-module over $A$, equipped with an additional adjointable left action of $\cstar{A}$. That is, a $*$-homomorphism $\varphi_E:\cstar{A} \rightarrow \adj{E}$. If additionally, $\cstar{A}$ is a von Neumann Algebra, $\varphi_E$ is normal and $\corr{E}$ is self dual, we call $\corr{E}$ a \emph{$W^*$-correspondence} over $\cstar{A}$. $\corr{E}$ is called \emph{essential} if $\varphi_{\corr{E}}(\vn{A})\corr{E}$ is dense in $\corr{E}$.
		\end{subsecDef}
		More generally, one defines a $C^*$-correspondence from $\cstar{A}$ to $\cstar{B}$ to be a right Hilbert $C^*$-module over $\cstar{B}$ with a left adjointable action of $\cstar{A}$, but we will not need this notion.
		We will omit $\varphi_E$ when writing the left action on a correspondence, and simply write $\varphi_E(a)x$ as $ax$.
		
		We note that for $W^*$-correspondences, it is known (Proposition 3.8 in \cite{paschke1973inner}) that they are always conjugate spaces, and we call the weak$*$ topology this induces on them the \emph{$\sigma$-weak topology}.
		
		The following notion of a covariant representation of a $C^*$-correspondence was studied by Muhly and Solel in \cite{Muhly1998}.
		\begin{subsecDef}
		Let $\corr{E}$ be a $C^*$-correspondence over $\cstar{A}$, and $\hilb{H}$ be a Hilbert space. A pair $(\sigma,T)$ is called a \emph{covariant representation} if:
		\begin{enumerate}
		\item $T:\corr{E}\rightarrow \bdd{H}$ is a completely bounded linear map.
		\item $\sigma:\cstar{A}\rightarrow \bdd{H}$ is a non-degenerate $*$-homomorphism.
		\item $T(ax) = \sigma(a)T(x)$ and $T(xa) = T(x)\sigma(a)$ for all $a\in\cstar{A},x\in\corr{E}$.
		\end{enumerate}
		If $\cstar{A}$ is a von Neumann algebra and $\corr{E}$ is a $W^*$-correspondence, we also require $\sigma$ to be normal.
		$(\sigma,T)$ is called a \emph{completely contractive covariant representation} if $T$ is completely contractive. It is called \emph{isometic} if $T(x)^*T(y) = \sigma(\langle x,y\rangle )$ for all $x,y\in\corr{E}$.
		\end{subsecDef}
		We will denote completely contractive covariant representation as c.c. representations, following \cite{Shalit2009}. The exact meaning of the adverb \emph{completely} here involves endowing $\corr{E}$ with an operator space structure, and is explained when defined in \cite{Muhly1998}.
		
		Given a c.c representation $(\sigma,T)$ of $\corr{E}$ on $\hilb{H}$, an important construction is the Hilbert space $\corr{E}\otimes_{\sigma}\hilb{H}$, which is defined to be the Hausdorff completion of $\corr{E}\otimes_{alg}\hilb{H}$, with respect to the pre-inner product defined by
		\[[x\otimes h,y\otimes g]:= \langle h,\sigma( \langle x,y \rangle )g \rangle .\]
		On $\corr{E}\otimes_{\sigma} \hilb{H}$ one defines $\tild{T}:\corr{E}\otimes_{\sigma} \hilb{H}\rightarrow \hilb{H}$ by 
		\[\tild{T}(x\otimes h):= T(x)h.\]
		In \cite[Lemma 3.5]{Muhly1998} it is shown that $(\sigma,T)$ is completely bounded (isometric/completely contractive) iff $\tild{T}$ is bounded (isometric/contractive).
		\begin{subsecLemma} \cite[Lemma 2.16]{Muhly2002} \label{bijection lemma}
		Let $\corr{E}$ be a $W^*$-correspondence over $\vn{M}$, with a normal representation $\sigma:\vn{M}\rightarrow\bdd{H}$.
		Then there is a bijection between c.c representation $(\sigma,T)$ of $\corr{E}$ and contractions  $\tilde{T}:\corr{E}\otimes_\sigma\hilb{H}\rightarrow\hilb{H}$ satisfying $\tilde{T}(ax\otimes h) = \sigma(a)\tilde{R}(x\otimes h)$ for all $a\in\vn{M},x\in\corr{E},h\in\hilb{H}$. This bijection is given by $T\mapsto\tilde{T}$.
		\end{subsecLemma}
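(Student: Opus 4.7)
The plan is to establish the two directions of the bijection separately and to leverage the previously cited \cite[Lemma 3.5]{Muhly1998} to convert between contractivity of $\tilde{T}$ and the complete contractivity required of $(\sigma,T)$.

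For the forward map $T\mapsto\tilde{T}$, I would define $\tilde{T}$ on elementary tensors by $\tilde{T}(x\otimes h):=T(x)h$ and extend linearly to $\corr{E}\otimes_{\mathrm{alg}}\hilb{H}$. That this formula descends to a bounded (in fact contractive) map on the Hausdorff completion $\corr{E}\otimes_\sigma\hilb{H}$ is precisely the content of \cite[Lemma 3.5]{Muhly1998} applied to the c.c.\ representation $(\sigma,T)$. The covariance condition is then a direct consequence of the left-module property $T(ax)=\sigma(a)T(x)$, since $\tilde{T}(ax\otimes h)=T(ax)h=\sigma(a)T(x)h=\sigma(a)\tilde{T}(x\otimes h)$.

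For the reverse direction, given a contraction $\tilde{T}:\corr{E}\otimes_\sigma\hilb{H}\to\hilb{H}$ satisfying the covariance relation, I would define $T(x)\in\bdd{H}$ by $T(x)h:=\tilde{T}(x\otimes h)$. Linearity in $x$ is immediate, and the estimate $\|x\otimes h\|^2=\langle h,\sigma(\langle x,x\rangle)h\rangle\leq\|x\|^2\|h\|^2$ combined with $\|\tilde{T}\|\leq 1$ gives $\|T(x)\|\leq\|x\|$, so $T$ is a bounded linear map into $\bdd{H}$. The identity $T(ax)=\sigma(a)T(x)$ is the given covariance rewritten. The identity $T(xa)=T(x)\sigma(a)$ follows from the internal tensor-product relation $xa\otimes h=x\otimes\sigma(a)h$ in $\corr{E}\otimes_\sigma\hilb{H}$, which yields $T(xa)h=\tilde{T}(xa\otimes h)=\tilde{T}(x\otimes\sigma(a)h)=T(x)\sigma(a)h$. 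Normality of $\sigma$ is used here to make the $W^*$-theoretic tensor product and its defining relations valid.

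The two assignments are manifestly inverse to each other, since both are determined on elementary tensors by the single formula $\tilde{T}(x\otimes h)=T(x)h$. The main obstacle is that the definition of a c.c.\ representation requires $T$ to be completely contractive, not merely contractive as a map of Banach spaces; this is exactly the point where \cite[Lemma 3.5]{Muhly1998} is invoked in the opposite direction, translating contractivity of $\tilde{T}$ into complete contractivity of $T$ at each matrix amplification level. Modulo that cited equivalence, every remaining verification is routine, so the entire proof reduces to assembling the two explicit formulas and citing the equivalence.
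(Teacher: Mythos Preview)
The paper does not supply a proof of this lemma; it merely quotes the statement and attributes it to \cite[Lemma~2.16]{Muhly2002}. Your argument is correct and is exactly the standard proof one finds in that reference: define $\tilde{T}$ and $T$ by the single formula $\tilde{T}(x\otimes h)=T(x)h$, verify the bimodule identities from the balancing relation $xa\otimes h=x\otimes\sigma(a)h$ and from the covariance hypothesis, and invoke \cite[Lemma~3.5]{Muhly1998} in both directions to pass between contractivity of $\tilde{T}$ and complete contractivity of $T$. There is nothing to add.
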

		\comm{
		\begin{subsecRemark}
		\label{continuityRemark}
		We note that a c.c. representation $(\sigma,T)$ of a $W^*$-correspondence $\corr{E}$ (over $\vn{M}$) on a Hilbert space $\hilb{H}$ is continuous, w.r.t the $\sigma$-weak topology on $\corr{E}$, and the WOT on $\bdd{H}$. This follows easily from the fact that $\tild{T}$ is bounded.
		\end{subsecRemark} 
		In fact it is true that $T$ is $\sigma$-weak to ultraweak continuous, but we will not need this.
		}
		
		Given $C^*$-correspondences $\corr{E}$ and $\corr{F}$ over the same $C^*$-algebra $\cstar{A}$, we define $\corr{E}\otimes\corr{F}$ to be the interior tensor product (as in Chapter 4 in Lance's book \cite{Lance1995}), with inner product given by $ \langle x_0\otimes y_0,x_1\otimes y_1 \rangle  =  \langle y_0, \langle x_0,x_1 \rangle y_1 \rangle $. We give it a left adjointable action by $a\cdot x\otimes y:= (ax)\otimes y$, and this makes it a $C^*$-correspondence. If $\corr{E},\corr{F}$ are $W^*$-correspondences, we take the self-dual extension the interior tensor product instead (see section 3 in \cite{paschke1973inner}, specifically Theorem 3.2). The left adjointable action is defined in the same way, noting that by Theorem 3.6 in \cite{paschke1973inner}, it really is enough to define the action on pure tensors. We note that the construction of $\corr{E}\otimes_{\sigma}\hilb{H}$, where $(\sigma,T)$ was a representation of a correspondence $\corr{E}$, is in fact this interior tensor product, when $\hilb{H}$ is thought of as a correspondence (from $\vn{A}$ to $\mathbb{C}$), with the left action given by $\sigma$.
		\begin{subsecRemark}\label{pure tensor remark}
		In \cite[Proposition 3.6]{paschke1973inner}, Paschke showed that if $X,Y$ are inner product $\vn{M}$-modules, then any bounded module map $T:X\rightarrow Y$ has a unique extension to the self-dual completion. This implies that in the case of two $W^*$-correspondences $\corr{E}$ and $\corr{F}$, any bounded module map from $\corr{E}\otimes_{alg} \corr{F}$ can be extended uniquely to a map from $\corr{E}\otimes\corr{F}$, and any module map from $\corr{E}\otimes \corr{F}$ is determined by its action on the pure tensors.
		\end{subsecRemark}
		
		\begin{subsecProp}\label{tensorProp}
		Let $\corr{E},\corr{E'},\corr{F},\corr{F'}$ be $W^*$-correspondences over the same von Neumann algebra $\vn{M}$, and let $\alpha\in\adj{\corr{F},\corr{F'}}$,  $\beta\in\adj{\corr{E},\corr{E'}}$ be contractive bimodule maps. Then there exist unique, contractive, adjointable bimodule maps $I\otimes\alpha$,$\beta\otimes I$, such that for pure tensors $e\otimes f$, they satisfy: 
		\begin{equation}
		\label{itensoralpha}
		I\otimes\alpha(e\otimes f) = e\otimes\alpha f.
		\end{equation}
		\begin{equation}
		\label{betatensori}
		\beta\otimes I(e\otimes f) = \beta e\otimes f.
		\end{equation}
		\end{subsecProp}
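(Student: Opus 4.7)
The plan is to define both $I\otimes\alpha$ and $\beta\otimes I$ on the algebraic tensor product $\corr{E}\otimes_{alg}\corr{F}$ by their prescribed action on pure tensors, extended linearly; verify that the resulting maps are contractive bimodule maps there; and then invoke Remark \ref{pure tensor remark} to extend uniquely to $\corr{E}\otimes\corr{F}$ and to obtain uniqueness. Since the extensions are bounded bimodule maps between self-dual $W^*$-correspondences, they are automatically adjointable by Paschke's theory.

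The bimodule property on pure tensors is immediate from the defining formulas, so the real work is contractivity on the algebraic tensor product. Here I would use that any contractive bimodule map between $W^*$-correspondences is adjointable, so $\alpha^*\alpha\leq I$ and $\beta^*\beta\leq I$ as operators on $\corr{F}$ and $\corr{E}$ respectively. The key tool is the standard positivity of Gram matrices in Hilbert $C^*$-modules: whenever $T\geq 0$ acts on a correspondence and $x_1,\dots,x_n$ are elements, the matrix $(\langle x_i,Tx_j\rangle)_{ij}\in M_n(\vn{M})$ is positive, as one sees by writing $T=S^*S$ and recognising $(\langle Sx_i,Sx_j\rangle)_{ij}$ as a Gram matrix.

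For $I\otimes\alpha$, take $\xi=\sum_{i=1}^n e_i\otimes f_i$ and factor the positive matrix $(\langle e_i,e_j\rangle)_{ij}=B^*B$ with $B=(b_{ki})\in M_n(\vn{M})$. Setting $g_k:=\sum_i b_{ki}f_i$, and using that $\alpha$ is a left module map,
\[
\bigl\langle (I\otimes\alpha)\xi,(I\otimes\alpha)\xi\bigr\rangle=\sum_{i,j,k}\langle b_{ki}\alpha f_i,b_{kj}\alpha f_j\rangle=\sum_k\langle\alpha g_k,\alpha g_k\rangle\leq\sum_k\langle g_k,g_k\rangle=\langle\xi,\xi\rangle,
\]
where the inequality uses $\alpha^*\alpha\leq I$. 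For $\beta\otimes I$, compute directly
\[
\bigl\langle(\beta\otimes I)\xi,(\beta\otimes I)\xi\bigr\rangle=\sum_{i,j}\bigl\langle f_i,\langle e_i,\beta^*\beta e_j\rangle f_j\bigr\rangle,
\]
and apply the Gram-matrix positivity to $T=I-\beta^*\beta\geq 0$ to conclude $(\langle e_i,\beta^*\beta e_j\rangle)_{ij}\leq(\langle e_i,e_j\rangle)_{ij}$ in $M_n(\vn{M})$; sandwiching this inequality between the row and column formed from the $f_j$'s yields contractivity.

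The main obstacle is precisely the Gram-matrix estimate for $\beta\otimes I$: one cannot simply pull the scalar contractivity of $\beta$ through the inner product, since the relevant quantity $\langle\beta e_i,\beta e_j\rangle$ lives in the noncommutative algebra $\vn{M}$ and must be controlled as a matrix, not merely in norm. Once this positivity argument is in place, the rest of the argument — module-map identities, passage to the self-dual completion via Remark \ref{pure tensor remark}, and automatic adjointability — is routine.
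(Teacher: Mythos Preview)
Your proposal is correct. The paper actually omits this proof entirely, writing only ``We omit the proof, as it is standard,'' so there is nothing to compare against; your argument supplies precisely the standard details.

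Two minor remarks. First, you need not invoke automatic adjointability from Paschke's theory: since $\alpha$ and $\beta$ are already assumed adjointable and their adjoints are again bimodule maps, the same construction applied to $\alpha^*$ and $\beta^*$ produces $I\otimes\alpha^*$ and $\beta^*\otimes I$, and a one-line check on pure tensors shows these are the adjoints of $I\otimes\alpha$ and $\beta\otimes I$. Second, your sentence ``any contractive bimodule map between $W^*$-correspondences is adjointable'' is unnecessary here, since $\alpha\in\adj{\corr{F},\corr{F'}}$ and $\beta\in\adj{\corr{E},\corr{E'}}$ are given as adjointable by hypothesis; the inequality $\alpha^*\alpha\leq I$ then follows directly from $\|\alpha\|\leq 1$. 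Neither point affects the validity of your argument.
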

		We omit the proof, as it is standard.
		
		As a conclusion, we see that by composing $(\beta\otimes I)\circ(I\otimes\alpha)$, we get a well defined adjointable bimodule map $\beta\otimes\alpha$ which on pure tensors satisfies $(\beta\otimes\alpha)(e\otimes f) = \beta e\otimes\alpha f$.
	
	\subsection{CP-maps, CP-semigroups and their dilations}
		
		Let $\vn{M}$ be a von Neumann algebra. We call $\varphi:\cstar{M}\rightarrow\cstar{M}$ a \emph{CP-map} if it is a completely positive, completely contractive and normal. Given a unital semigroup $\sgp{S}$, we say that $\Theta=\setind{\Theta}{s}{S}$ is a \emph{CP-semigroup} over $\vn{M}$ if for all $s,t\in \sgp{S}$, $\Theta_s:\vn{M}\rightarrow\vn{M}$ is CP, $\Theta_s\circ\Theta_t = \Theta_{st}$ and $\Theta_e = Id_{\vn{M}}$. We then say that $\Theta$ is \emph{indexed} by $\sgp{S}$. A CP-semigroup is called an E-semigroup if it consists of $*$-endomorphisms.
		
		We note that a single CP-map $\Theta$ generates a CP-semigroup indexed $\mathbb{N}$ by setting $\Theta_n:=\Theta^n$. Two commuting CP-maps $\varphi,\psi$ will generate a CP-semigroup indexed by $\mathbb{N}^2$ by setting $\Theta_{(n,m)}:=\varphi^n\psi^m$. More generally, suppose $\sgp{S}$ is the free semigroup with relations in a graph $\gr{G}$, and for each letter $t$ in the alphabet of $\sgp{S}$ we associate a CP-map $\Theta_t$, such that they commute according to $\gr{G}$. We may then define the CP-semigroup indexed by $\sgp{S}$ by setting $\Theta_s:=\Theta_{t_1}\dots \Theta_{t_k}$, where $s=t_1\dots t_k$ is some presentation of $s\in\sgp{S}$. This is well defined and a CP-semigroup exactly because we chose the CP-maps to commute according to $G$.
		
		We proceed to define a notion of dilation for CP-maps and CP-semigroups:
		\begin{subsecDef}
		A triple $(p,\vn{R},\alpha)$ is called an \emph{E-dilation} of a CP-semigroup $\Theta = \setind{\Theta}{s}{S}$ if $\vn{R}$ is a von Neumann algebra containing $\vn{M}$, $p$ a projection in $\vn{R}$ such that $\vn{M} = p\vn{R}p$, and $\alpha = \setind{\alpha}{s}{S}$ an E-semigroup over $\vn{R}$ such that 
		\[
		\Theta_s(pap) = p\alpha_s(a)p
		\]
		for all $s\in\sgp{S}$,$a\in\vn{R}$.
		\end{subsecDef}
		\begin{subsecDef}
		Let $\varphi:\vn{M}\rightarrow\vn{M}$ be CP-map. We say a triple $(p,\vn{R},\psi)$ is an \emph{E-dilation} of $\varphi$ if $(p,\vn{R},\Psi)$ is an $e$-dliation of $\Phi$, where $\Psi,\Phi$ are the CP-semigroups generated by $\psi,\varphi$ (indexed by $\mathbb{N}$).
		\end{subsecDef}
		We note that this notion of dilation is different from the Stinespring dilation \cite{Stinespring1955} of a CP-map.
		
		We will say that two commuting CP-maps have a \emph{simultaneous} E-dilation if the CP-semigroup they generate (indexed by $\mathbb{N}^2$) has an E-dilation. More generally, for a tuple of CP-maps commuting according to a graph $\gr{G}$, we will say they have a simultaneous E-dilation (w.r.t $\gr{G}$) if the CP-semigroup they generate, indexed by $\sgp{S}(\gr{G})$ as above, has an E-dilation.
		
		It is known that for a single CP-map, an E-dilation always exists. This was shown by Bhat in \cite{Bhat1996} for $\vn{M}=\bdd{H}$ (albeit for a notion of dilation which is slightly weaker than the one used here), and later by Muhly and Solel in \cite[Theorem 2.12]{Muhly2002} in its full generality.
		For two commuting CP-maps, it is also known that there is always a simultaneous E-dilation. This was shown by Bhat \cite{Bhat1998} for $\bdd{H}$ (again, for a weaker notion of dilation than the one we use), and later by Solel \cite{Solel2006}. For three commuting CP-maps there is a counter example, akin to Parrott's example in the classical dilation theory of contractions on Hilbert spaces, given by Shalit and Solel in \cite{Shalit2009}. In that paper, Shalit and Solel defined the notion of a subproduct system, and drew connections between their representation theory and the theory of CP-semigroups. We therefore proceed to define these notions.
	
	\subsection{Subproduct systems, product systems and their representations}
	
		Let $\vn{M}$ be a von Neumann algebra and $\sgp{S}$ a semigroup with a neutral element $e$.
		\begin{subsecDef}
		A \emph{subproduct system} over $\vn{M}$, indexed by $\sgp{S}$, is a collection $\sps{X}=\setind{X}{s}{S}$ of $W^*$-correspondences over $\vn{M}$ (which we will sometimes call \emph{fibers}), and \emph{product maps} $U_{s,t}$ that satisfy the following conditions:
		\begin{enumerate}
		\item $X_e = \vn{M}$.
		\item For every $s,t\in \sgp{S}$, $U_{s,t}:\sps{X}_s\otimes \sps{X}_t\rightarrow\sps{X}_{st}$ is a co-isometric $\vn{M}$-correspondence map.
		\item The maps $U_{e,s},U_{s,e}$ are given by the right and left actions of $\vn{M}$.
		\item The product maps compose associatively, in the sense that 
		\[U_{rs,t}(U_{r,s}\otimes Id_{\sps{X}_t}) = U_{r,st}(Id_{\sps{X}_r}\otimes U_{s,t}).\]
		\end{enumerate}
		If additionally, the product maps are isometric, we call $\sps{X}$ a \emph{product system}.
		\end{subsecDef}
		\begin{subsecRemark}
		It is an easy calculation to check that $U_{e,s},U_{s,e}$, if given by the left and right action of $\vn{M}$, are automatically isometries, and $U_{s,e}$ is also a unitary. Requiring $U_{e,s}$ to be a co-isometry is then simply requiring it to be onto, and that is equivalent to saying that $\sps{X_s}$ is essential. So in fact, with the way we defined a subproduct system (and consequently, a product system), we have implicitly required all the $W^*$-correspondences in it to be essential.
		\end{subsecRemark}
		\begin{subsecRemark}
		\label{uniqueremark}
		We note that that for any $s=s_1\dots s_n$, there exists a co-isometric map 
		\[
		U_{s_1,\dots,s_n}:\sps{X}_{s_1}\otimes\dots\otimes\sps{X}_{s_n}\rightarrow \sps{X}_s
		\] 
		given by composing product maps tensored with the identity, in some order. The associativity condition assures us that this map is unique, meaning that the order of compositions does not matter. We will continue using the notation $U_{s_1,\dots,s_n}$ for this unique map, and we will also sometimes drop the subscript completely from both this map, and the product maps, when there is no ambiguity about what they should be. That is, we will denote maps of the form $U_{s,t}$ or $U_{s_1,\dots s_n}$ simply as $U$ (or as $U^X$ when there is another subproduct system at play).
		\end{subsecRemark}

		We proceed to define representations and morphisms of subproduct systems.
		\begin{subsecDef}
		Let $\sps{X} = \setind{\sps{X}}{s}{S}$ be a subproduct system, and $\hilb{H}$ a Hilbert space. A \emph{representation} $T = \setind{T}{s}{S}$ of $\sps{X}$ on $\hilb{H}$ is a collection of maps that satisfy:
		\begin{enumerate}
		\item $(T_e,T_s)$ is a c.c. representation of $\sps{X}_s$ on $\hilb{H}$ for all $s\in\sgp{S}$.
		\item $T_{st}(U_{s,t}(x\otimes y)) = T_s(x)T_t(y)$ for all $s,t\in\sgp{S},x\in \sps{X}_s,y\in\sps{X}_t$.
		\end{enumerate}
		We usually denote $T_e=\sigma$. When more convenient, we will treat $T$ as a function $T:\sps{X}\rightarrow \bdd{H}$. We say $T$ is \emph{isometric} if $(\sigma,T_s)$ is isometric for all $s\in\sgp{S}$, and that $T$ is \emph{fully co-isometric} if $(\sigma,T_s)$ is fully co-isometric for all $s\in\sgp{S}$.
		\end{subsecDef}
		
		\begin{subsecDef}
		Let $X,Y$ be subproduct systems indexed by the same semigroup $\sgp{S}$, with product maps $\set{U^X_{s,t}}$ and $\set{U^Y_{s,t}}$, respectively. Assume also that $\vn{M}=\sps{X}_e=\sps{Y}_e$. A \emph{morphism} from $Y$ to $X$ is a collection $V=\setind{V}{s}{S}$ of co-isometric $\vn{M}$-correspondence maps $V_s:\sps{Y}_s\rightarrow \sps{X}_s$, where $V_e$ is the identity, and they satisfy:
		\[U^X_{s,t}\circ (V_s\otimes V_t) = V_{st}\circ U^Y_{s,t}\]
		for all $s,t\in\sgp{S}$.
		We call $V$ an \emph{isomorphism} if $V_s$ are isometric for all $s\in\sgp{S}$.
		\end{subsecDef}
		More generally, we may allow $\sps{X}_e$ and $\sps{Y}_e$ to be different, but isomorphic, algebras. In this case we require $V_e$ to be a $*$-isomorphism, and the correspondence maps to be bimodule maps with respect to $V_e$.
		
		If $\sps{X}_s$ is a closed subspace of $\sps{Y}_s$ for all $s\neq e$ (and $\sps{X}_e=\sps{Y}_e$), and the orthogonal projections $p_s:\sps{Y}_s\rightarrow \sps{X}_s$ constitute a morphism, we will say that $\sps{X}$ is a \emph{subproduct subsystem} of $\sps{Y}$. We remark that it follows easily that there is a morphism from $\sps{Y}$ to $\sps{X}$ if and only if $\sps{X}$ is isomorphic to a subsystem of $\sps{Y}$.
			
	\subsection{CP-semigroups and representations of subproduct systems}
		
		As said earlier, Shalit and Solel introduced the notion of subproduct systems as a tool in studying CP-semigroups and their dilation theory. To this end, in \cite{Shalit2009} they associated to each CP-semigroup a subproduct system coupled with a representation, which is in some sense unique. Although they stated this correspondence only for subsemigroups of $\mathbb{R}^k_+$, their construction, along with the theorems we will use in this paper, work for arbitrary semigroups (with a unit), with the proofs unchanged. We will briefly describe the construction and results, without going into full details.
		
		From now on we fix a von Neumann algebra $\vn{M}\subseteq\bdd{H}$, always thinking about $\vn{M}$ represented faithfully on $\hilb{H}$ with the identity representation.
		
		Given a CP-map $\Theta$ on $\vn{M}$, we may define the Hilbert space $\vn{M}\otimes_{\Theta} \hilb{H}$ in a similar way to what we did before, to be the Hausdorff completion of $\vn{M}\otimes_{alg} \hilb{H}$ with respect to the positive semi-definite sesquilinear form
		\[
		[x\otimes h,y\otimes g]:= \langle h,\Theta( \langle x,y \rangle )g \rangle.
		\]
		We define a representation $\pi_{\Theta}$ of $\vn{M}$ on $\vn{M}\otimes_{\Theta} \hilb{H}$ by
		\[
		\pi_{\Theta}(S)(T\otimes h) = ST\otimes h.
		\]
		This is in fact nothing but the Stinespring dilation of $\Theta$.
		We define a contractive linear map $W_{\Theta}:\hilb{H}\rightarrow \vn{M}\otimes_{\Theta} \hilb{H}$ by
		\[W_{\Theta}(h) = I\otimes h.\]
		We then define the $W^*$-correspondence
		\[
		X_{\Theta} = \mathcal{L}_{\vn{M}}(\hilb{H},\vn{M}\otimes_{\Theta} \hilb{H})
		\]
		where $\mathcal{L}_{\vn{M}}(\hilb{H},\vn{M}\otimes_{\Theta} \hilb{H})$ is the space of bounded operators $S\in B(H,\vn{M}\otimes_{\Theta} \hilb{H})$ which intertwine the identity representation and $\pi_{\Theta}$, i.e. $ST = \pi_{\Theta}(T)S$ for all $T\in\vn{M}$.
		$\sps{X}_{\Theta}$ is a $W^*$-correspondence over $\vn{M}'$, the commutant of $\vn{M}$ in $\bdd{H}$. The actions are given by
		\[S\cdot X:=(I\otimes S)\circ X,\text{  }X\cdot S := X\circ S,\]
		and the $\vn{M'}$ valued inner product is given by
		\[
		\langle X,Y\rangle := X^*\circ Y
		\]
		$\sps{X}_{\Theta}$ is called the \emph{Arveson-Stinespring correspondence associated with $\Theta$}. We also get a c.c. representation by taking $\sigma = Id_{\vn{M}'}$ and $T_{\Theta}(X) = W_{\Theta}^*X$. $(\sigma,T_{\Theta})$ is then called the \emph{identity representation} associated with $\Theta$.
		The full details of this construction are detailed in \cite{Muhly2002}, and as Shalit and Solel note when citing this in \cite{Shalit2009}, the relevant results in \cite{Muhly2002} hold for non unital CP-maps as well, with the proofs unchanged.
		
		We continue following Chapter 2 in \cite{Shalit2009} and define, given a CP-semigroup $\setind{\Theta}{s}{S}$, $\sps{X}_e = \vn{M}$ and $\sps{X}_s:=\sps{X}_{\Theta_s}$ for $s\neq e$. These will be the fibers of our subproduct system $\sps{X}$. Giving the definitions of the product maps would only serve to obfuscate this brief survey, so it suffices to say that their definitions and the reason they work are worked out in full detail in \cite{Shalit2009}.
		
		We may also define $T_s:= T_{\Theta_s}$ and $T_e = Id_{\vn{M}}$ and get a representation 
		$T$ of $\sps{X}$. This construction is called the \emph{Arveson-Stinespring subproduct system} associated with $\Theta$, and $T$ is the \emph{identity representation of $\Theta$}. The pair $(X,T)$ is denoted by $\Xi(\Theta)$. It satisfies the neat property
		\[
		\Theta_s(a) = \tild{T_s}(I_{\sps{X}_s}\otimes a)\tild{T_s}^*
		\]
		for all $a\in\vn{M}$. That is, one can recover $\Theta$ from $\Xi(\Theta)$. 
		\begin{subsecRemark}
		\label{remark1}
		It is also worth noting that given any subproduct system representation $T$, the RHS in the equality above will define a CP-semigroup, and that CP-semigroup is denoted by $\Sigma(\sps{X},T)$. Furthermore, if $T$ is isometric, then it is an E-semigroup, and conversely, if $(\sps{X},T)$ was obtained as the Arveson-Stinespring subproduct system of an E-semigroup, then $T$ is isometric \emph{and} $\sps{X}$ is a product system.
		
		Another useful fact, as shown in \cite[Theorem 2.6]{Shalit2009}, is the following: If $\Xi\circ\Sigma(\sps{X},T) = (\sps{Y},R)$, and $T_s$ is injective for all $s$, then $\sps{X}$ and $\sps{Y}$ are isomorphic, and $R$ is given via $T$ composed with this isomorphism.
		\end{subsecRemark}
		It is also true that any subproduct system arises as the Arveson-Stinespring subproduct system of some CP-semigroup, as one can always define an injective representation of $\sps{X}$ on the Fock space $\bigoplus_{s\in\sgp{S}}\sps{X}_s$ \cite[Section 2.3]{Shalit2009} by 
		\[
		T_s{x}(y):=U_{s,t}(x\otimes y),
		\]
		when $y\in \sps{X}_t$.
		
		We can finally define dilations of representations of subproduct systems, and discuss their connection to dilations of CP-semigroups.
		\begin{subsecDef}
		Let $\sps{X}$ and $\sps{Y}$ be subproduct systems over $\vn{M}$ indexed by the same semigroup $\sgp{S}$, and let $T,R$ be representations of $\sps{X},\sps{Y}$ on Hilbert spaces $\hilb{H},\hilb{K}$, respectively. $(\sps{Y},R,\hilb{K})$ is called a \emph{dilation} of $(\sps{X},T,\hilb{H})$ if
		\begin{enumerate}
		\item $\sps{X}$ is a subsystem of $\sps{Y}$,
		\item \hilb{H} is a subspace of $\hilb{K}$,
		\item For all $s\in\sgp{S}$, $\tild{R}^*_s\hilb{H}\subseteq \sps{X}_s\otimes \hilb{H}$ and $\restr{\tild{R}^*_s}{\hilb{H}} = \tild{T}^*_s$.
		\end{enumerate}
		$R$ is said to be an \emph{isometric} dilation if $R$ is an isometric representation.
		\end{subsecDef}
		The third item can be replaced with the following three conditions:
		\begin{enumerate}[(a)]
		{\setlength\itemindent{0.5cm}
		\item $\restr{R_e(\cdot)}{\hilb{H}} = P_{\hilb{H}}\restr{R_e(\cdot)}{\hilb{H}} = T_e(\cdot)$,
		\item $P_{\hilb{H}}\restr{\tild{R}_s}{\sps{X}_s\otimes\hilb{H}} = \tild{T}_s$ for all $s\in\sgp{S}$,
		\item $P_{\hilb{H}}\restr{\tild{R}_s}{\sps{Y}_s\otimes\hilb{K}\ominus\sps{X}_s\otimes\hilb{H}} = 0$ for all $s\in\sgp{S}$.
		}
		\end{enumerate}
		If we further restrict $\sps{X}=\sps{Y}$ to be product systems, then the definition coincides with the definition of a dilation of a product system given in \cite{Muhly2002}.
		The following lemma will be useful to us when we wish to verify a certain representation is a dilation of another, when the subproduct system is fixed:
		\begin{subsecLemma}
		\label{coexLemma}
		Let $T,R$ be two representations of the same subproduct system $\sps{X}$ on $\hilb{H},\hilb{K}$, respectively, with $\hilb{H}\subseteq\hilb{K}$. Then $(\sps{X},R,\hilb{K})$ is a dilation of $(\sps{X},T,\hilb{H})$ if and only if $R(x)$ is a co-extension of $T(x)$, for all $x\in\sps{X}$.
		\end{subsecLemma}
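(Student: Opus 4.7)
The plan is to translate both conditions into concrete block-matrix statements with respect to the decomposition $\hilb{K}=\hilb{H}\oplus\hilb{H}^\perp$, and then verify that they coincide fiber by fiber.

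First I unpack what ``$R(x)$ is a co-extension of $T(x)$'' means: with respect to $\hilb{K}=\hilb{H}\oplus\hilb{H}^\perp$ one has
\[
R(x)=\begin{pmatrix} T(x) & 0 \\ * & * \end{pmatrix},
\]
that is, $P_{\hilb{H}}R(x)|_{\hilb{H}}=T(x)$ and $R(x)\hilb{H}^\perp\subseteq\hilb{H}^\perp$. I will work with the three conditions (a), (b), (c) listed in the excerpt just above the lemma as my characterization of ``dilation''.

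\medskip
\emph{Forward direction.} Assume (a), (b), (c) hold. For $s=e$, (a) gives $R_e(a)|_{\hilb{H}}=T_e(a)$; since $R_e$ is a $*$-homomorphism, applying (a) also to $a^*$ shows that $\hilb{H}$ is invariant under $R_e$, and hence so is $\hilb{H}^\perp$. So each $R_e(a)$ is indeed a co-extension of $T_e(a)$. For $s\neq e$ and $x\in\sps{X}_s$, evaluating (b) on the pure tensor $x\otimes h$ with $h\in\hilb{H}$ yields $P_{\hilb{H}}R_s(x)h=T_s(x)h$, i.e.\ the top-left block identity. For the vanishing of the top-right block I use that $R_e$-invariance of $\hilb{H}$ and $\hilb{H}^\perp$ implies the orthogonal decomposition
\[
\sps{X}_s\otimes_{R_e}\hilb{K}\;=\;\bigl(\sps{X}_s\otimes_{T_e}\hilb{H}\bigr)\oplus\bigl(\sps{X}_s\otimes_{R_e|_{\hilb{H}^\perp}}\hilb{H}^\perp\bigr),
\]
because for $h\in\hilb{H}$, $k\in\hilb{H}^\perp$ and $x,y\in\sps{X}_s$ one has $\langle x\otimes h,y\otimes k\rangle=\langle h,R_e(\langle x,y\rangle)k\rangle=0$. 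Therefore $\sps{X}_s\otimes\hilb{K}\ominus\sps{X}_s\otimes\hilb{H}=\sps{X}_s\otimes\hilb{H}^\perp$, and (c) applied to $x\otimes k$ gives $P_{\hilb{H}}R_s(x)k=0$, i.e.\ $R_s(x)\hilb{H}^\perp\subseteq\hilb{H}^\perp$. Thus $R_s(x)$ is a co-extension of $T_s(x)$.

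\medskip
\emph{Converse direction.} Assume $R(x)$ is a co-extension of $T(x)$ for every $x\in\sps{X}$. Taking $x=a\in\sps{X}_e=\vn{M}$ gives $R_e(a)|_{\hilb{H}}=T_e(a)$ directly, which is (a). For $s\neq e$ and $x\in\sps{X}_s$, the top-left block identity reads $P_{\hilb{H}}\tilde{R}_s(x\otimes h)=\tilde{T}_s(x\otimes h)$ for $h\in\hilb{H}$, which by Remark~\ref{pure tensor remark} (uniqueness of bounded module maps on pure tensors) and linearity/continuity extends to all of $\sps{X}_s\otimes\hilb{H}$, giving (b). The vanishing of the top-right block gives $P_{\hilb{H}}\tilde{R}_s(x\otimes k)=0$ for $k\in\hilb{H}^\perp$, and extending this to $\sps{X}_s\otimes\hilb{H}^\perp=\sps{X}_s\otimes\hilb{K}\ominus\sps{X}_s\otimes\hilb{H}$ (using the same orthogonal decomposition as above, now valid because we have already proved (a)) yields (c).

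\medskip
The only mildly delicate point, and therefore the main obstacle, is justifying the orthogonal decomposition of $\sps{X}_s\otimes\hilb{K}$ in the self-dual $W^*$-tensor product: it needs $R_e$-invariance of $\hilb{H}$, which in the converse direction must be extracted from the $s=e$ case of the co-extension hypothesis before it can be used in the $s\neq e$ case. Once this bookkeeping is handled, both directions reduce to an immediate comparison of block entries.
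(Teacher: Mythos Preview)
Your proof is correct and follows essentially the same approach as the paper's: both translate the dilation conditions (a), (b), (c) into the two block-matrix identities characterizing a co-extension, and check equivalence by evaluating $\tilde{R}_s$ on pure tensors $x\otimes h$ and $x\otimes k$. You are somewhat more explicit than the paper about the orthogonal splitting $\sps{X}_s\otimes_{R_e}\hilb{K}=(\sps{X}_s\otimes\hilb{H})\oplus(\sps{X}_s\otimes\hilb{H}^\perp)$ and its reliance on the $R_e$-invariance of $\hilb{H}$, which the paper leaves implicit when it plugs $x\otimes k$ into condition (c). One small wording quibble: in the converse direction, the co-extension hypothesis for a single $a\in\vn{M}$ only gives $P_{\hilb{H}}R_e(a)|_{\hilb{H}}=T_e(a)$, not $R_e(a)|_{\hilb{H}}=T_e(a)$; the missing invariance $R_e(a)\hilb{H}\subseteq\hilb{H}$ comes from applying the co-extension hypothesis to $a^*$ together with $R_e(a)=R_e(a^*)^*$, exactly as the paper does via its inner-product computation---you clearly have this in mind in your final paragraph, so just make it explicit where you currently say ``directly''.
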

		\begin{proof}
		We begin by recalling that $R(x)$ is a co-extension of $T(x)$ if and only if 
		\begin{equation}
		\label{inCoexLemma}
		\begin{split}
		&P_{\hilb{H}}\restr{R(x)}{\hilb{H}} = T(x)\\
		&P_{\hilb{H}}\restr{R(x)}{\hilb{H^\perp}} = 0.
		\end{split}
		\end{equation}
		If $R$ is a dilation of $T$, then for all $x\in \sps{X}_s$ and $h,g\in\hilb{H}$, we have by (b): 
		\[
		 \langle R_s(x)h,g \rangle  =  \langle \tild{R}_s(x\otimes h),g \rangle  =  \langle \tild{T}_s(x\otimes h),g \rangle  =  \langle T_s(x)h,g \rangle 
		\]
		and by (c), now with $k\in \hilb{H}^\perp$
		\[
		 \langle R_s(x)k,g \rangle  =  \langle \tild{R}_s(x\otimes k),g \rangle  =  \langle 0,g \rangle  = 0
		\]
		In the other direction, if we assume (\ref{inCoexLemma}), we get (b) and (c) for pure tensors with the same argument going backwards. As for (a), we use the fact that $R_e,T_e$ are $*$-homomorphisms to get for all $a\in X_e,h\in\hilb{H},k\in\hilb{K}$:
		\[
		 \langle R_e(a)h,k \rangle  =  \langle h,R_e(a^*)k \rangle  =  \langle h,P_{\hilb{H}}R_e(a^*)k \rangle .
		\]
		But by \ref{inCoexLemma}, $P_{\hilb{H}}R_e(a^*)k = T_e(a^*)P_{\hilb{H}}k$, and we have:
		\[
		 \langle h,T_e(a^*)P_{\hilb{H}}k \rangle  =  \langle T_e(a)h,P_{\hilb{H}}k \rangle  =  \langle T_e(a)h,k \rangle 
		\]
		and we are done.
		\end{proof}
		The key observation we will need in this paper is Proposition 5.8 in \cite{Shalit2009}, which we repeat here:
		\begin{subsecProp} \label{repDilCpDil}
		Suppose $(\sps{Y},R,\hilb{K})$ is a dilation of $(\sps{X},T,\hilb{H})$. Then the CP-semigroup given by
		\[
		\Theta_s(a) = \tild{R}_s(I_{\sps{X}_s}\otimes a)\tild{R}_s^*
		\]
		is a dilation of the CP-semigroup given by
		\[
		\Psi_s(a) = \tild{T}_s(I_{\sps{X}_s}\otimes a)\tild{T}_s^*
		\]
		\end{subsecProp}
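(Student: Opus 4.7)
The plan is to unpack the definition of CP-semigroup dilation, which requires two structural facts: that $T_e(\vn{M})' = p\, R_e(\vn{M})'\, p$ where $p := P_{\hilb{H}}$, and that the compression identity
\[
p\,\Theta_s(a)\,p = \Psi_s(pap)
\]
holds for every $a \in R_e(\vn{M})'$ and every $s \in \sgp{S}$. That $\Theta$ and $\Psi$ are themselves CP-semigroups (on $R_e(\vn{M})'$ and $T_e(\vn{M})'$ respectively) is the content of Remark \ref{remark1}, so only these two structural properties require verification. (I read $I_{\sps{X}_s}$ in the displayed formula for $\Theta_s$ as $I_{\sps{Y}_s}$, since $\tild{R}_s$ is defined on $\sps{Y}_s\otimes \hilb{K}$.)

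For the algebraic identification I would first invoke dilation condition (a) to see that $R_e(m)|_{\hilb{H}} = T_e(m)$ for every $m \in \vn{M}$; self-adjointness of $R_e(\vn{M})$ then forces $\hilb{H}$ to reduce $R_e(\vn{M})$, so $p \in R_e(\vn{M})'$. The inclusion $p\,R_e(\vn{M})'\,p \subseteq T_e(\vn{M})'$ is immediate, and the reverse inclusion follows by extending any $b \in T_e(\vn{M})'$ to $\hilb{K}$ by zero on $\hilb{H}^\perp$ and verifying that the extension commutes with each $R_e(m)$, using that $R_e(\vn{M})$ preserves both $\hilb{H}$ and $\hilb{H}^\perp$.

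For the compression identity I would fix $a \in R_e(\vn{M})'$ and work on a pure tensor $x\otimes k \in \sps{X}_s \otimes_{T_e} \hilb{H}$, viewed inside $\sps{Y}_s \otimes_{R_e}\hilb{K}$ via the canonical embedding (which is isometric because $\sps{X}$ is a subsystem of $\sps{Y}$ and $T_e = R_e|_{\hilb{H}}$). Applying $(I_{\sps{Y}_s}\otimes a)$ yields $x\otimes ak$; decomposing $ak = pak + (1-p)ak$, the first summand coincides with $(I_{\sps{X}_s}\otimes pap)(x\otimes k)$ since $pk = k$, while the second is orthogonal to every $y\otimes g \in \sps{X}_s\otimes \hilb{H}$ because
\[
\langle x\otimes (1-p)ak,\, y\otimes g\rangle = \langle (1-p)ak,\, R_e(\langle x,y\rangle)g\rangle = 0,
\]
using that $R_e(\langle x,y\rangle)g \in \hilb{H}$. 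Applying $P_{\hilb{H}}\tild{R}_s$, condition (b) converts the first piece into $\tild{T}_s(I_{\sps{X}_s}\otimes pap)(x\otimes k)$ and condition (c) annihilates the second, so that $P_{\hilb{H}}\tild{R}_s(I_{\sps{Y}_s}\otimes a)|_{\sps{X}_s\otimes \hilb{H}} = \tild{T}_s(I_{\sps{X}_s}\otimes pap)$.

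To conclude, I would combine this identity with dilation condition (3), which says that $\tild{R}_s^*|_{\hilb{H}} = \tild{T}_s^*$ takes values in $\sps{X}_s\otimes \hilb{H}$, to obtain
\[
\restr{p\,\Theta_s(a)\,p}{\hilb{H}} = \restr{P_{\hilb{H}}\tild{R}_s(I_{\sps{Y}_s}\otimes a)\tild{R}_s^*}{\hilb{H}} = \tild{T}_s(I_{\sps{X}_s}\otimes pap)\tild{T}_s^* = \Psi_s(pap),
\]
as required. The main delicacy I anticipate is the bookkeeping between the two distinct interior tensor products $\sps{X}_s \otimes_{T_e} \hilb{H}$ and $\sps{Y}_s \otimes_{R_e}\hilb{K}$, and showing precisely that $(I_{\sps{Y}_s}\otimes a)$ restricted to the smaller space acts as $(I_{\sps{X}_s}\otimes pap)$ modulo the orthogonal complement that (c) annihilates.
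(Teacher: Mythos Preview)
The paper does not actually prove this proposition: it is quoted verbatim as Proposition~5.8 from \cite{Shalit2009}, with no argument given. So there is no ``paper's own proof'' to compare against. Your argument is correct and is essentially the standard one: identify the corner $p\,R_e(\vn{M})'\,p$ with $T_e(\vn{M})'$ using that $\hilb{H}$ reduces $R_e(\vn{M})$, then verify the compression identity by splitting $(I_{\sps{Y}_s}\otimes a)(x\otimes k)$ into its $\sps{X}_s\otimes\hilb{H}$ component (handled by condition~(b)) and its orthogonal complement (annihilated by condition~(c)), and finish by invoking condition~(3) for $\tild{R}_s^*$. Your observation that $I_{\sps{X}_s}$ in the displayed formula for $\Theta_s$ should read $I_{\sps{Y}_s}$ is also correct; this is a typo in the statement as reproduced in the paper.
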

		This means that we are able to translate the problem of finding E-dilations of CP-semigroups, via $\Xi$, to a problem of finding isometric dilations of representations of subproduct systems. We also have the following proposition:
		\begin{subsecProp}
		If $\sps{X}$ is a subsystem of $\sps{Y}$ with projections $\setind{p}{s}{S}$, and $T$ a representation of $\sps{X}$, then $\set{T_s\circ p_s}_{s\in\sgp{S}}$ is a representation of $\sps{Y}$, and furthermore a dilation of $T$.
		\end{subsecProp}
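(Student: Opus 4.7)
The plan is to define $R_s := T_s \circ p_s$ for all $s \in \sgp{S}$, and then to check the two assertions separately: first that $R = \set{R_s}_{s\in\sgp{S}}$ is a representation of $\sps{Y}$, and second that $R$ is a dilation of $T$ (on the same underlying Hilbert space $\hilb{H}$).

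For the first assertion, I would verify the two axioms of a representation. Since $p_e = \mathrm{Id}_{\vn{M}}$ (because $\sps{X}_e = \sps{Y}_e$ under the definition of a subsystem), we have $R_e = T_e$, so $R_e$ is automatically a normal nondegenerate $*$-representation of $\vn{M}$. To see that $(R_e, R_s)$ is a c.c.\ covariant representation of $\sps{Y}_s$, use that $p_s$ is a bimodule map to get $R_s(ay b) = T_s(p_s(ayb)) = T_s(a\, p_s(y)\, b) = R_e(a) R_s(y) R_e(b)$, while complete contractivity of $R_s$ follows from the fact that $p_s$ is completely contractive (being an orthogonal projection of operator spaces) and that $T_s$ is c.c. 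For the multiplicativity axiom, use the morphism identity $U^X_{s,t} \circ (p_s \otimes p_t) = p_{st} \circ U^Y_{s,t}$ together with the multiplicativity of $T$:
\[
R_{st}\bigl(U^Y_{s,t}(y\otimes z)\bigr) = T_{st}\bigl(p_{st} U^Y_{s,t}(y\otimes z)\bigr) = T_{st}\bigl(U^X_{s,t}(p_s y\otimes p_t z)\bigr) = T_s(p_s y) T_t(p_t z) = R_s(y) R_t(z).
\]

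For the second assertion, I would take $\hilb{K} = \hilb{H}$ and verify the three reformulated conditions (a), (b), (c) of the definition of dilation. Condition (a) is immediate since $R_e = T_e$. For (b), note that for $x \in \sps{X}_s$ we have $p_s(x) = x$, so $\tild{R}_s(x\otimes h) = T_s(p_s x) h = \tild{T}_s(x\otimes h)$. For (c), the key input is that $\sps{X}_s \subseteq \sps{Y}_s$ is a closed sub-$W^*$-correspondence with a projection $p_s$ that is a bimodule map, so that the Hilbert module inner product satisfies $\langle x, z\rangle = 0$ whenever $x \in \sps{X}_s$ and $z \in \ker p_s$; combined with the inner product formula for $\otimes_{T_e}$, this shows
\[
\sps{Y}_s \otimes_{T_e} \hilb{H} \;=\; (\sps{X}_s \otimes_{T_e} \hilb{H}) \oplus (\ker p_s \otimes_{T_e} \hilb{H}),
\]
and $\tild R_s$ annihilates the second summand because $p_s$ vanishes on $\ker p_s$.

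The main obstacle I anticipate is the bookkeeping in condition (c), namely, justifying cleanly that the orthogonal complement of $\sps{X}_s\otimes\hilb{H}$ inside $\sps{Y}_s\otimes\hilb{H}$ equals $\ker p_s \otimes \hilb{H}$; this is a routine consequence of $p_s$ being a self-adjoint bimodule projection and of Paschke's self-dual extension theorem (together with Remark~\ref{pure tensor remark} on pure-tensor determination), so no serious difficulty is expected. The remaining verifications are essentially diagram chases using that $\set{p_s}$ is a morphism of subproduct systems.
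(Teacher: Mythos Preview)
Your proposal is correct and is precisely the routine verification the paper has in mind when it writes ``This follows immediately from the definitions.'' The paper gives no further argument, so your detailed check of the representation axioms (via the morphism identity $U^X_{s,t}\circ(p_s\otimes p_t)=p_{st}\circ U^Y_{s,t}$) and of the dilation conditions (a)--(c) is exactly the content being asserted; if anything, for condition~3 you could alternatively note directly that $\tild{R}_s = \tild{T}_s\circ(p_s\otimes I_{\hilb{H}})$, hence $\tild{R}_s^* = (p_s^*\otimes I_{\hilb{H}})\tild{T}_s^*$ with $p_s^*$ the inclusion $\sps{X}_s\hookrightarrow\sps{Y}_s$, which gives $\tild{R}_s^*\hilb{H}\subseteq \sps{X}_s\otimes\hilb{H}$ and $\tild{R}_s^*|_{\hilb{H}}=\tild{T}_s^*$ in one line and avoids the decomposition bookkeeping you flagged.
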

		This follows immediately from the definitions. 
		This proposition explains our course of action in the next sections. Let $\sgp{S}(\gr{G})$ be the free semigroup with relations in an acyclic graph $\gr{G}$. In Section \ref{embedding section}, we will prove that any subproduct system indexed $\sgp{S}(\gr{G})$, embeds in a product system (actually, we will prove this only requiring $\gr{G}$ to be without 3-cycles, which is a stronger result). Then, in Section \ref{dilation section}, we will show that any representation of a product system indexed by $\sgp{S}$ has an isometric dilation. Using the two propositions above we will conclude that any CP-semigroup generated by a tuple of CP-maps commuting according to $\gr{G}$, has a simultaneous dilation.
		
		Lastly, we note that attempting to first embed our subproduct systems in product systems should not, in theory, make our job harder. This is due to the next fact:
		\begin{subsecRemark}
		If $\sps{X}$ has an isometric representation $T$, and $T_e$ is injective, then $\sps{X}$ must be a product system.
		\end{subsecRemark}
		The explanation for the remark is as follows: $T$ isometric with $T_e$ injective implies that $T$ is injective. Therefore, by Remark \ref{remark1} the E-semigroup defined by $\Theta_s(a) = \tild{T_s}(I_{\sps{X}_s}\otimes a)\tild{T_s}^*$ satisfies that $(\sps{X},T) \cong \Sigma(\Theta)$. But since this was an E-semigroup, $\sps{X}$ is a product system.
		
		Since our interests lie in dilating the identity representation of the Arveson-Stinespring subproduct system associated with a CP-semigroup, we will always start with a representation $T$ with $T_e$ injective (recall that in this case, $T_e$ was defined to actually be the identity). For any dilation $(\sps{Y},S,\sps{K})$ of $(\sps{X},T,\hilb{H})$, $S_e$ will remain injective, as by the definitions, $S_e(a)$ will always have $T_e(a)$ as a direct summand. If we assume further that $S$ is isometric, we must conclude that $S(x)^*S(x) = S_e(\langle x,x\rangle)$, which will mean that $S(x)=0$  if and only if $x=0$, thus $S$ must be injective. Therefore, in order that $(\sps{Y},S,\hilb{K})$ be an isometric dilation of an Arveson-Stinespring subproduct system of a CP-semigroup, $\sps{Y}$ must be a product system.
		
\section{Embedding $\sps{X}$ in a product system} \label{embedding section}
	\label{chapEmbed}
	Throughout this section, $\sgp{S} := \sgp{S}(\gr{G})$, where $\gr{G}$ is a graph which contains no 3-cycles, and $\vn{M}$ some von Neumann algebra. We denote by $\alp{A}$ the alphabet of $\sgp{S}$. $\sps{X}$ will be some subproduct system of $\vn{M}$-correspondences, indexed by $\sgp{S}$, with product maps $\set{U^\sps{X}_{s,t}}$. Our goal is to show that $\sps{X}$ embeds into a product system $\sps{Y}$, that is, that there is a morphism of subproduct systems $V:\sps{Y}\rightarrow \sps{X}$. The idea for this construction is mainly inspired by the constructions Solel used in \cite[Lemma 5.10]{Solel2006} to show the existence of a simultaneous dilation of two commuting CP-maps.
	
	We begin by constructing the fibers of $\sps{Y}$. We of course take $\sps{Y}_e:=\sps{X}_e = \vn{M}$. For any letter of the alphabet $s\in\alp{A}$, we define
	\[
	\sps{Y}_s:=\bigoplus_{t\in\alp{A}}\sps{X}_t.
	\]
	We also denote by $p_t$ the orthogonal projection of $\sps{Y}_t$ onto $\sps{X}_t$.
	For any word $s\in\sgp{S}$, we fix an arbitrary presentation $s=t_1\dots t_n$, and define
	\[
	\sps{Y}_s:= \sps{Y}_{t_1}\otimes\dots\otimes\sps{Y}_{t_n}.
	\]
	
	Before we proceed to define the product maps, we need to introduce the following notion:
	\begin{secDef}
	Let $\gr{G}$ be a graph,  with vertices $\alp{A}:=V(\gr{G})$.
	For a collection $\set{\corr{E}_s}_{s\in\alp{A}}$ of $\vn{M}$-correspondences, we call a collection $\set{f_{s,t}:s,t\in\alp{A},\set{s,t}\in E(\gr{G})}$ a \emph{unitary flip system} if $f_{s,t}:\corr{E}_s\otimes\corr{E}_t\rightarrow\corr{E}_t\otimes\corr{E}_s$ is a unitary bimodule map satisfying $f_{s,t}^*=f_{t,s}$ for all $s,t\in\alp{A}$. We will refer to the maps $f_{s,t}$ as \emph{flips}.
	\end{secDef}
	We wish now to define a unitary flip system on $\set{\sps{Y}_s}_{s\in\alp{A}}$, as a precursor to defining the product maps. We do this by first noting the following decompositions, for $s,t\in\alp{A}$:
	\begin{equation}
	\label{decomp}
	\begin{split}
	&\sps{Y}_s\otimes \sps{Y}_t = (\sps{X}_s\otimes \sps{X}_t)\oplus(\sps{X}_t\otimes \sps{X}_s)\oplus(\dots)\\
	&\sps{Y}_t\otimes \sps{Y}_s = (\sps{X}_t\otimes \sps{X}_s)\oplus(\sps{X}_s\otimes \sps{X}_t)\oplus(\dots),
	\end{split}
	\end{equation}
	where the dots are the same in both cases (recall that $\sps{Y}_s = \sps{Y}_t$). We can now define $f_{s,t}$, matricially, according to these decompositions:
	\[
	f_{s,t} = 
	\begin{pmatrix} 
		(U^\sps{X}_{t,s})^*U^\sps{X}_{s,t}	& 1-(U^\sps{X}_{t,s})^*U^\sps{X}_{t,s}	& 0\\ 
		1-(U^\sps{X}_{s,t})^*U^\sps{X}_{s,t}& (U^\sps{X}_{s,t})^*U^\sps{X}_{t,s} 	& 0\\
		0						& 0							& 1
	\end{pmatrix}
	\]
	Verifying that these are unitary bimodule maps is an easy calculation. So we see that $\set{f_{s,t}}_{s,t\in\alp{A}}$ constitute a unitary flip system.
	
	Given two presentations of a word $s=a_1\dots a_n = b_1\dots b_n$ we may define a map $f: \sps{Y}_{a_1}\otimes\dots\otimes\sps{Y}_{a_n}\rightarrow\sps{Y}_{b_1}\otimes\dots\otimes\sps{Y}_{b_n}$ by composing maps of the form $I\otimes f_{r,t}\otimes I$ in some order.
	\begin{secLemma}
	\label{combilemma}
	Let $\gr{G}$ be a graph without 3-cycles, $\sgp{S}$ the free semigroup with relations in $\gr{G}$, and $\alp{A}$ its alphabet. Then given any unitary flip system $\set{f_{s,t}:s,t\in\alp{A}}$ for $\set{\sps{Y}}_{s\in\alp{A}}$, the map $f$ defined as in the above paragraph is unique.
	\end{secLemma}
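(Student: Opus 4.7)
The plan is to recast the statement as a coherence problem on the graph of presentations of a fixed word and then reduce uniqueness to the vanishing of a short list of local 2-cells, showing that the only potentially nontrivial cell is excluded by the no-3-cycle hypothesis.

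Fix $s\in\sgp{S}$ of length $n$, and let $\Gamma_s$ be the graph whose vertices are length-$n$ sequences in $\alp{A}$ representing $s$, and whose edges correspond to single adjacent commutations $v_1v_2\leftrightarrow v_2v_1$ with $\set{v_1,v_2}\in E(\gr{G})$. A directed path from $a_1\dots a_n$ to $b_1\dots b_n$ in $\Gamma_s$ determines a composition of maps of the form $I\otimes f_{r,t}\otimes I$, and this composition is the candidate map $f$ in the statement. Uniqueness of $f$ is exactly the assertion that this composition depends only on the endpoints, i.e., that the composition around every closed loop in $\Gamma_s$ is the identity.

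It is enough to verify this for a set of generating 2-cells of $\Gamma_s$. These fall into three types. First, backtracking cells $\cdots vw\cdots\to\cdots wv\cdots\to\cdots vw\cdots$ contribute $f_{w,v}\circ f_{v,w}$, which is the identity because each $f_{v,w}$ is unitary with $f_{v,w}^*=f_{w,v}$. Second, square cells corresponding to two commutations applied at disjoint pairs of positions, performed in either order; these give the identity by the interchange law for tensor products, since disjoint maps $(\cdots\otimes f_{v,w}\otimes\cdots)$ and $(\cdots\otimes f_{x,y}\otimes\cdots)$ commute. Third, hexagonal cells of the form $abc\to bac\to bca\to cba\to cab\to acb\to abc$, which arise only when three consecutive letters $a,b,c$ pairwise commute and would require a Yang--Baxter-type identity on the flips. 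But such a cell exists only if $\set{a,b},\set{a,c},\set{b,c}\in E(\gr{G})$, i.e., a 3-cycle; the hypothesis on $\gr{G}$ rules this out, so this case is vacuous.

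The main obstacle, and the reason the detailed argument is deferred to the appendix, is the reduction in the previous paragraph, namely the claim that every closed loop in $\Gamma_s$ is generated by the three elementary types above. This is a coherence statement about the partially commutative monoid $\sgp{S}(\gr{G})$. I would prove it by induction on path length, analyzing how two consecutive elementary commutations interact: either they act at disjoint positions (type two), or they overlap in a cancelling way (type one), or they overlap on a single letter in a way that forces the existence of a third edge among the three involved letters (the triangle configuration leading to type three). The no-3-cycle hypothesis makes the last subcase disappear, so the induction goes through using only the two trivial cell types, yielding path-independence and hence uniqueness of $f$.
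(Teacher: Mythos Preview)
Your coherence framework is sound and is a genuinely different route from the paper's: recast uniqueness as the statement that every closed loop in the presentation graph $\Gamma_s$ is filled by elementary 2-cells, classify those as backtracks, disjoint squares, and hexagons, and observe that the no-3-cycle hypothesis eliminates the hexagons. All of this is correct, and once the coherence claim (that these three types generate all loops) is established, the lemma follows immediately.

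The gap is in your sketched proof of that coherence claim. Your trichotomy for two consecutive commutations is wrong: you assert that overlap on a single position ``forces the existence of a third edge among the three involved letters,'' but the sequence $\dots xyz\dots\to\dots yxz\dots\to\dots yzx\dots$ requires only $\{x,y\},\{x,z\}\in E(\gr{G})$, not $\{y,z\}$. So two consecutive edges in $\Gamma_s$ can overlap at one position without lying in any 2-cell; no local move applies there, and ``induction on path length'' stalls (square moves do not shorten a loop, they only rearrange it). You correctly flagged this reduction as the main obstacle, but the argument you propose does not carry it; what is needed is a more global mechanism than pairwise inspection of adjacent edges.

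The paper supplies exactly such a mechanism via a double induction on the word length $n$ and the cycle length $m$. One tracks the last letter $a$ of the basepoint: if $a$ never moves along the cycle, the whole cycle factors as (cycle in a shorter word)~$\otimes~I$ and the inner induction on $n$ applies. If $a$ does move, one marks the first edge where $a$ leaves the last slot (swapping with some $b$) and the last edge where it returns (swapping with some $b'$); the no-3-cycle hypothesis forces $b=b'$, since otherwise $a,b,b'$ would form a triangle in $\gr{G}$. Geodesic shortcuts that fix the last two letters then decompose the original cycle into two strictly shorter cycles (handled by induction on $m$) together with one rectangle whose associated map is $(f_q^{-1}\circ f_q)\otimes(f_{b,a}\circ f_{a,b})=I$ by the interchange law. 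This letter-tracking device is precisely the global input your local analysis is missing, and it is also where the no-3-cycle hypothesis actually does its work in the paper's argument.
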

	The proof of the lemma is combinatorial in nature, and is quite long and tedious. We therefore move it to the appendix, so as to not break the flow.
	
	We are now ready to define the product maps in $\sps{Y}$. Recall that in the definition of the fibers of $\sps{Y}$, for each word in $\sgp{S}$ we have fixed some presentation.
	For $s,t\in\sgp{S}$, let us write the presentations we fixed as $s=a_1\dots a_n, t=b_1\dots b_k,st=c_1\dots c_{n+k}$. We then define $U_{s,t}^{\sps{Y}}$ to be the unique map $(\sps{Y}_{a_1}\otimes\dots\otimes\sps{Y}_{a_n})\otimes (\sps{Y}_{a_1}\otimes\dots\otimes\sps{Y}_{a_n})\rightarrow \sps{Y}_{c_1}\otimes\dots\otimes\sps{Y}_{c_{n+k}}$, as in Lemma \ref{combilemma}. If $s$ or $t$ were the empty word, we define $U_{s,t}^{\sps{Y}}$ to be the left or right action of $\sps{Y}_e$, accordingly. This definition obviously yields a unitary map, as the composition of unitaries. The associativity condition follows immediately from the uniqueness in Lemma \ref{combilemma}: $U_{rs,t}^{\sps{Y}}(U_{r,s}^{\sps{Y}}\otimes I_{\sps{Y}_t})$ and $U_{r,st}^{\sps{Y}}(I_{\sps{Y}_r}\otimes U_{s,t}^{\sps{Y}})$ are both maps as in the lemma, with the same range and domain, therefore they are equal.
	
	To show that $\sps{X}$ embeds in $\sps{Y}$, we proceed to define the morphism $V:\sps{Y}\rightarrow \sps{X}$. We obviously take $V_e$ to be the identity. For $s=a_1\dots a_n$, the fixed presentation we chose earlier, we define 
	\[
	V_s := U_{a_1,\dots,a_n}^{\sps{X}}\circ(p_{a_1}\otimes\dots\otimes p_{a_n}),
	\]
	where we recall that $U_{a_1,\dots,a_n}^{\sps{X}}$ is the unique map $\sps{X}_{a_1}\otimes\dots\otimes \sps{X}_{a_n}\rightarrow \sps{X}_s$ as in Remark \ref{uniqueremark}. This map is indeed co-isometric, as it is a composition of two co-isometries.
	We must now prove the identity:
	\begin{equation}
	\label{eq1}
	V_{st}\circ U^{\sps{Y}}_{s,t} = U^{\sps{X}}_{s,t}\circ (V_s\otimes V_t)
	\end{equation}
	For all $s,t\in\sgp{S}$. This will follow from the following proposition, which will clear up what is happening here.
	\begin{secProp}
	\label{diagramProp}
	For two presentations $s = a_1\dots a_k = b_1\dots b_k$ and the map $f:Y_{a_1}\otimes \dots \otimes Y_{a_k}\rightarrow Y_{b_1}\otimes \dots \otimes Y_{b_k}$ from Lemma \ref{combilemma}, we have that the following diagram commutes:
	\begin{align}
	\label{diagram1}
	\xymatrix{
	\sps{Y}_{a_1}\otimes \dots \otimes \sps{Y}_{a_k} \ar[d]^{p_{a_1}\otimes\dots\otimes p_{a_k}} \ar[rr]^{f} && \sps{Y}_{b_1}\otimes \dots \otimes \sps{Y}_{b_k} \ar[d]^{p_{b_1}\otimes\dots\otimes p_{b_k}}
	\\
	\sps{X}_{a_1}\otimes \dots \otimes \sps{X}_{a_k} \ar[dr]^{U^{\sps{X}}} && \sps{X}_{b_1}\otimes \dots \otimes \sps{X}_{b_k} \ar[dl]_{U^{\sps{X}}}
	\\
	&\sps{X}_s
	}
	\end{align}
	\end{secProp}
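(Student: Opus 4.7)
The plan is to reduce to the two-letter case by induction on the number of elementary flips composing $f$. Since by construction $f$ is a composition of maps of the form $I \otimes f_{r,t} \otimes I$, and Lemma \ref{combilemma} guarantees uniqueness of $f$ independent of the decomposition chosen, we may fix a convenient one. For the base case of a single elementary flip acting at slots $i, i+1$ (swapping commuting letters $a_i, a_{i+1}$ to produce the presentation $b_1 \cdots b_k$), the associativity in Remark \ref{uniqueremark} lets us factor $U^{\sps{X}}_{a_1,\ldots,a_k}$ through $I \otimes U^{\sps{X}}_{a_i,a_{i+1}} \otimes I$ and $U^{\sps{X}}_{b_1,\ldots,b_k}$ through $I \otimes U^{\sps{X}}_{a_{i+1},a_i} \otimes I$, both composed with the \emph{same} outer product map (since $a_i a_{i+1} = a_{i+1} a_i$ in $\sgp{S}$). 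Since the projections on unchanged slots pass through the identity factors, the claim reduces to the two-letter identity
\begin{equation*}
U^{\sps{X}}_{s,t} \circ (p_s \otimes p_t) \;=\; U^{\sps{X}}_{t,s} \circ (p_t \otimes p_s) \circ f_{s,t}
\end{equation*}
on $\sps{Y}_s \otimes \sps{Y}_t$ for edges $\{s,t\} \in E(\gr{G})$.

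This two-letter identity is then verified summand-by-summand using the decomposition \eqref{decomp} together with the matrix form of $f_{s,t}$. On $\sps{X}_s \otimes \sps{X}_t$, the left side equals $U^{\sps{X}}_{s,t}$ directly, while the right side simplifies to $U^{\sps{X}}_{t,s} (U^{\sps{X}}_{t,s})^* U^{\sps{X}}_{s,t} = U^{\sps{X}}_{s,t}$ using that $U^{\sps{X}}_{t,s}$ is co-isometric. On $\sps{X}_t \otimes \sps{X}_s$, the left side vanishes because $p_s \otimes p_t$ annihilates this summand, and the right side vanishes because $U^{\sps{X}}_{t,s} \bigl(1 - (U^{\sps{X}}_{t,s})^* U^{\sps{X}}_{t,s}\bigr) = 0$, again by co-isometry. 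On the remaining summand both sides are zero by the matrix definition of $f_{s,t}$ and the orthogonality of the projections.

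The inductive step is routine: writing $f = g \circ h$ with $h$ a single elementary flip producing an intermediate presentation $(c_i)$ and $g$ a composition of fewer flips reaching $(b_i)$, the base case applied to $h$ replaces $U^{\sps{X}}_{a_1,\ldots,a_k} \circ (p_{a_1} \otimes \cdots \otimes p_{a_k})$ by $U^{\sps{X}}_{c_1,\ldots,c_k} \circ (p_{c_1} \otimes \cdots \otimes p_{c_k}) \circ h$, and the inductive hypothesis applied to $g$ completes the chain. I do not anticipate a serious obstacle; the computation is essentially bookkeeping, and the conceptually important point is that the matrix form of $f_{s,t}$ has been engineered so that its $(1,1)$ entry $(U^{\sps{X}}_{t,s})^* U^{\sps{X}}_{s,t}$ collapses to $U^{\sps{X}}_{s,t}$ after post-composition with the co-isometry $U^{\sps{X}}_{t,s}$, with the other entries designed so that the unwanted summands contribute zero after applying $U^{\sps{X}}_{t,s}$.
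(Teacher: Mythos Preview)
Your proposal is correct and follows essentially the same approach as the paper: reduce by induction to the case of a single elementary flip, strip away the identity factors on either side, and verify the resulting two-letter diagram by direct computation using the block-matrix form of $f_{s,t}$ and the co-isometry property of the $U^{\sps{X}}$'s. The paper carries out the two-letter check on pure tensors rather than summand-by-summand, but the content is identical---your summand formulation is arguably a bit cleaner notationally.
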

	\begin{proof}
	It is clear that it suffices to show this when the presentations differ only by one rewriting according to a commutation relation, and the general case will follow by induction. Thus the map 
	$\xymatrix{
	\sps{Y}_{a_1}\otimes \dots \otimes \sps{Y}_{a_k} \ar[rr]^{f}&& \sps{Y}_{b_1}\otimes \dots \otimes \sps{Y}_{b_k} 
	}$
	can be taken to be a flip, tensored with the identity. Following this reasoning, since the maps in question will act the same on all components that are not flipped, it really suffices to show that for commuting letters $r,t$, the following diagram commutes:
	\begin{align}
	\label{subdiagram1}
	\xymatrix{
	\sps{Y}_{r}\otimes \sps{Y}_{t} \ar[d]^{p_{r}\otimes p_{t}} \ar[rr]^{f_{r,t}} && \sps{Y}_{t}\otimes \sps{Y}_{r} \ar[d]^{p_{t}\otimes p_{r}}
	\\
	\sps{X}_{r}\otimes \sps{X}_{t} \ar[dr]^{U^{\sps{X}}_{r,t}} && \sps{X}_{t}\otimes \sps{X}_{r} \ar[dl]_{U^{\sps{X}}_{t,r}} 
	\\
	&X_s
	}
	\end{align}
	where $s=rt=tr$.
	This reduces to a calculation. It suffices to show commutativity of (\ref{subdiagram1}) when acting on pure tensors. Thus we take 
	$x=\begin{pmatrix}x_1\\x_2\\ \vdots\end{pmatrix}\in Y_r, z=\begin{pmatrix}z_1\\z_2\\ \vdots\end{pmatrix}\in Y_t$, where $x_1,z_1\in X_r$ and $x_2,z_2\in X_t$. On one hand, $(p_r\otimes p_t)(x\otimes z) = x_1\otimes z_2$, and this is mapped further to $U^X_{r,t}(x_1\otimes z_2)$.
	On the other hand, we recall (\ref{decomp}) and write, according to those decompositions, $x\otimes z=\begin{pmatrix}x_1\otimes z_2\\z_1\otimes x_2\\ *\end{pmatrix}$. we get:
	\begin{align*}
	f_{s,t}(x\otimes z) &= 
	\begin{pmatrix} 
	  (U^X_{t,r})^*U^X_{r,t}&1-(U^X_{t,r})^*U^X_{t,r}&0\\ 
	  1-(U^X_{r,t})^*U^X_{r,t}&(U^X_{r,t})^*U^X_{t,r}&0\\
	  0&0&1
	\end{pmatrix}
	\begin{pmatrix}x_1\otimes z_2\\z_1\otimes x_2\\ *\end{pmatrix}
	\\ &=\begin{pmatrix} (U^X_{t,r})^*U^X_{r,t}(x_1\otimes z_2)+ (1-(U^X_{t,r})^*U^X_{t,r})(z_1\otimes x_2)\\ *\\ *\end{pmatrix}
	\end{align*}
	The projection $p_t\otimes p_r$ now maps this to $(U^X_{t,r})^*U^X_{r,t}(x_1\otimes z_2)+ (1-(U^X_{t,r})^*U^X_{t,r})(z_1\otimes x_2)$. Applying $U^X_{t,r}$ we see:
	\begin{align*}
	U^X_{t,r}&((U^X_{t,r})^*U^X_{r,t}(x_1\otimes z_2)+ (1-(U^X_{t,r})^*U^X_{t,r})(z_1\otimes x_2))\\
	&=U^X_{t,r}(U^X_{t,r})^*U^X_{r,t}(x_1\otimes z_2)+ (U^X_{t,r}-U^X_{t,r}(U^X_{t,r})^*U^X_{t,r})(z_1\otimes x_2) \\ 
	&=U^X_{r,t}(x_1\otimes z_2) + (U^X_{t,r}-U^X_{t,r})(z_1\otimes x_2) \\
	&= U^X_{r,t}(x_1\otimes z_2)
	\end{align*}
	exactly as we wanted.
	\end{proof}
	
	We now notice that (\ref{eq1}) describes a diagram of the sort handled in Proposition \ref{diagramProp}: suppose the presentations we fixed are $s=a_1\dots a_k$, $t=b_1\dots b_l$, $st = c_1\dots c_{k+l}$, then (\ref{eq1}) corresponds to the diagram:
	\[
	\xymatrix{
	\sps{Y}_{a_1}\otimes \dots \otimes \sps{Y}_{a_k}\otimes \sps{Y}_{b_1}\otimes \dots \otimes \sps{Y}_{b_l} \ar[d]^{p_{a_1}\otimes\dots\otimes p_{a_{b_l}}} \ar[rrr]^{U^\sps{Y}} &&& \sps{Y}_{c_1}\otimes \dots \otimes \sps{Y}_{c_{k+l}} \ar[d]^{p_{c_1}\otimes\dots\otimes p_{c_{k+l}}}
	\\
	\sps{X}_{a_1}\otimes \dots \otimes \sps{X}_{a_k}\otimes \sps{X}_{b_1}\otimes \dots \otimes \sps{X}_{b_l} \ar[dr]^{U^\sps{X}} &&& \sps{X}_{c_1}\otimes \dots \otimes \sps{X}_{c_{k+l}} \ar[dll]_{U^\sps{X}}
	\\
	&X_{st}
	}
	\]
	which commutes according to the last proposition. To conclude, we rephrase what we have shown in the form of a theorem:
	\begin{secTheorem}
	\label{mainthm3}
	Let $\sps{X}$ be a subproduct system of $\vn{M}$-correspondences, indexed by $\sgp{S}(\gr{G})$, where $\gr{G}$ has no 3-cycles. Then there exists a product system $\sps{Y}$ (of $\vn{M}$-correspondences, indexed by $\sgp{S}(\gr{G})$) which $\sps{X}$ embeds in. That is, there is a morphism of subproduct systems $V:\sps{Y}\rightarrow\sps{X}$.
	\end{secTheorem}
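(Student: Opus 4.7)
The plan is to assemble the construction that has already been laid out throughout this section into a coherent proof. The fibers of $\sps{Y}$ are already specified: $\sps{Y}_e = \vn{M}$; for each letter $s\in\alp{A}$, $\sps{Y}_s = \bigoplus_{t\in\alp{A}}\sps{X}_t$; and for general $s\in\sgp{S}(\gr{G})$ with its fixed presentation $s = a_1\dots a_n$, $\sps{Y}_s := \sps{Y}_{a_1}\otimes\dots\otimes\sps{Y}_{a_n}$. First I would verify that each $f_{s,t}$, defined matricially with respect to the decomposition (\ref{decomp}), is a unitary $\vn{M}$-bimodule map and satisfies $f_{t,s} = f_{s,t}^*$; this is a direct computation that only uses the fact that $(U^{\sps{X}}_{s,t})^{*}U^{\sps{X}}_{s,t}$ and $(U^{\sps{X}}_{t,s})^{*}U^{\sps{X}}_{t,s}$ are projections.

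Next, for $s,t\in\sgp{S}(\gr{G})$ with fixed presentations $s=a_1\dots a_n$, $t=b_1\dots b_k$, $st=c_1\dots c_{n+k}$, I would define $U^{\sps{Y}}_{s,t}$ as any composition of maps of the form $I\otimes f_{r,u}\otimes I$ carrying the $(a_1,\dots,a_n,b_1,\dots,b_k)$-tensor to the $(c_1,\dots,c_{n+k})$-tensor. Such a composition exists because the two orderings of letters represent the same word in $\sgp{S}(\gr{G})$, and Lemma \ref{combilemma} guarantees that the result is independent of the chosen sequence of flips. As a composition of unitaries, $U^{\sps{Y}}_{s,t}$ is unitary. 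The associativity condition $U^{\sps{Y}}_{rs,t}(U^{\sps{Y}}_{r,s}\otimes I) = U^{\sps{Y}}_{r,st}(I\otimes U^{\sps{Y}}_{s,t})$ then falls out of another application of Lemma \ref{combilemma}: both sides are maps between the same tensor products obtained from flips, so by uniqueness they agree.

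Finally, I would define $V_s := U^{\sps{X}}_{a_1,\dots,a_n}\circ(p_{a_1}\otimes\dots\otimes p_{a_n})$, which is co-isometric as a composition of co-isometries, and $V_e$ is the identity. To obtain the morphism identity $V_{st}\circ U^{\sps{Y}}_{s,t} = U^{\sps{X}}_{s,t}\circ(V_s\otimes V_t)$, I would recognize that this is an instance of Proposition \ref{diagramProp}: the top arrow $U^{\sps{Y}}_{s,t}$ rearranges a tensor along the $(a_1,\dots,a_n,b_1,\dots,b_k)$-presentation into a tensor along the $(c_1,\dots,c_{n+k})$-presentation of $st$, the vertical arrows are the coordinate projections onto the respective $\sps{X}$-summands, and the bottom diagonal arrows are the unique maps $U^{\sps{X}}$ into $\sps{X}_{st}$.

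The principal obstacle is Lemma \ref{combilemma} itself, which is where the hypothesis that $\gr{G}$ has no $3$-cycles is consumed: without it, there is no reason to expect that two different sequences of flip-rewritings between equivalent presentations yield the same overall unitary. This combinatorial claim is genuinely nontrivial, which is why it is relegated to the appendix; once it is in hand, everything else reduces to the direct verifications above and to the commutative diagram of Proposition \ref{diagramProp}.
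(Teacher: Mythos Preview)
Your proposal is correct and follows essentially the same approach as the paper: construct the fibers of $\sps{Y}$ as tensor products along fixed presentations, define the product maps via compositions of the flips $f_{s,t}$ (using Lemma~\ref{combilemma} for well-definedness and associativity), define $V_s$ as $U^{\sps{X}}\circ(p_{a_1}\otimes\dots\otimes p_{a_n})$, and then invoke Proposition~\ref{diagramProp} to verify the morphism identity. You have also correctly isolated Lemma~\ref{combilemma} as the one place where the no-$3$-cycle hypothesis enters.
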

\section{Isometric dilations of product system representations} \label{dilation section}
	Since in the last section we managed to embed the subproduct systems we are concerned with in product systems, we will restrict the discussion in this chapter to dilations of representations of product systems. We restrict the notion of dilation slightly: from now on we only consider a dilation of $(\sps{X},T,\hilb{H})$ to be of the form $(\sps{X},S,\hilb{K})$. That is, we don't enlarge the product system, only the space it is represented on. We will therefore usually just call $S$ a dilation of $T$, and not bother with the tuples $(\sps{X},T,\hilb{H})$,$(\sps{X},S,\hilb{K})$. This coincides with the definition in Solel's paper \cite{Solel2006}.
	\subsection{The isometric dilation property}
	
		To try to make the notation less cumbersome, we introduce the following definition:
		\begin{subsecDef}
		We say a unital semigroup $\sgp{S}$ has the \emph{isometric dilation property} (IDP) if any representation $T$ of any product system $\sps{X}$, indexed by $\sgp{S}$, has an isometric dilation.
		\end{subsecDef}
		So, with this notation, $\mathbb{N}$ and $\mathbb{N}^2$ have the IDP (For $\mathbb{N}$, this follows easily from Theorem and Definition 2.18 in \cite{Solel2006}, and for $\mathbb{N}^2$ this is shown in \cite[Theorem 4.4]{Solel2006}). However, $\mathbb{N}^3$ does not have it, which is a consequence of Parrott's example, as shown in \cite{Shalit2009}. Our goal in section is to prove that for an acyclic graph $\gr{G}$, $\sgp{S}(\gr{G})$ has the IDP.
		To do this, we begin laying some groundwork. In the following definition, $T$ and $S$ are representations of the same product system $\sps{X}$, on Hilbert spaces $\hilb{H},\hilb{K}$, respectively.
		\begin{subsecDef}
		$T$ and $S$ are said to be \emph{unitarily equivalent} if there is a unitary $U:\hilb{H}\rightarrow\hilb{K}$, such that for any $x\in\sps{X}$, $S(x) = UT(x)U^*$
		\end{subsecDef}
		We note that if $T$ is isometric and unitarily equivalent to $S$, then $S$ is isometric as well.
		\begin{subsecDef}
		We say that $T = T_1\oplus T_2$ if $T_1,T_2$ are representation of $\sps{X}$ on $\hilb{H_1},\hilb{H_2}$, such that $\hilb{H}=\hilb{H_1}\oplus\hilb{H_2}$, and for all $x\in\sps{X}$, $T(x) = T_1(x)\oplus T_2(x)$.
		\end{subsecDef}
		We remark that it is easy to see that $T$ decomposes as a direct sum $T_1\oplus T_2$ if and only if $\hilb{H}$ has a subspace $\hilb{H_1}$, which is reducing for all $T(x)$. Also, it is equally easy to see that if $T=T_1\oplus T_2$, then $T$ is a dilation of both $T_1$ and $T_2$.
		
	\subsection{Minimal dilations and a lifting lemma}
		\begin{subsecDef}
		Suppose $S$ is a dilation of $T$. Then it is called \emph{minimal} if 
		\[
		\hilb{K} = \bigvee_{x\in \sps{X}}S(x)\hilb{H}.
		\]
		\end{subsecDef}
		We proceed to prove a proposition, asserting the existence of a \emph{unique} minimal isometric dilation of any representation of a product system indexed by $\mathbb{N}$.
		\begin{subsecProp}
		\label{minimalProp}
		Let $\sps{X}=\setind{\sps{X}}{n}{\mathbb{N}}$ be a product system, with representation $T$ on a Hilbert space $\hilb{H}$. Then there is an isometric dilation $R$ of $T$, acting on a Hilbert space $\hilb{K}$, which is minimal. Furthermore, $R$ is unique, up to unitary equivalence.
		\end{subsecProp}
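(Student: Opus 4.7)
The plan follows the Sz.-Nagy template: produce an isometric dilation by an existing theorem, cut down to the cyclic subspace generated by $\hilb{H}$ to obtain minimality, and then deduce uniqueness by showing the inner products in any minimal dilation are determined by $T$ alone. Since $\sps{X}$ is a product system indexed by $\mathbb{N}$, every product map $U_{m,n}:\sps{X}_m\otimes\sps{X}_n\to\sps{X}_{m+n}$ is a \emph{unitary} $\vn{M}$-bimodule map, a fact used crucially below. For existence, the result cited just before the proposition (\cite[Theorem and Definition 2.18]{Solel2006}) provides some isometric dilation $S$ of $T$ on a Hilbert space $\hilb{K}\supseteq \hilb{H}$. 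Setting
\[
\hilb{K}_0 := \bigvee_{n\geq 0,\,x\in\sps{X}_n} S_n(x)\hilb{H},
\]
the product-representation identity $S_m(y)S_n(x)=S_{m+n}(U_{m,n}(y\otimes x))$ and the covariance identity $S_e(a)S_n(x)=S_n(ax)$ show $\hilb{K}_0$ is invariant under every $S_n(x)$; also $\hilb{H}\subseteq\hilb{K}_0$ because $\sigma(1)=I$. Restricting yields an isometric representation $R$ on $\hilb{K}_0$ that is still a co-extension of $T$, hence an isometric dilation by Lemma \ref{coexLemma}, and it is minimal by construction.

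For uniqueness, let $(R,\hilb{K})$ and $(R',\hilb{K}')$ be two minimal isometric dilations of $T$. I would define $U:\hilb{K}\to\hilb{K}'$ on the dense spanning set by $U(R_n(x)h):=R'_n(x)h$ and show it extends to a unitary satisfying $R'_n(x)=UR_n(x)U^*$ and $U|_{\hilb{H}}=I_{\hilb{H}}$. The sole content is the inner product identity
\[
\langle R_n(x)h, R_m(y)g\rangle = \langle R'_n(x)h, R'_m(y)g\rangle, \qquad x\in\sps{X}_n,\;y\in\sps{X}_m,\;h,g\in\hilb{H}.
\]
By symmetry assume $n\leq m$. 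Using unitarity of $U_{n,m-n}$, write $y=U_{n,m-n}\bigl(\sum_i x_i\otimes z_i\bigr)$; the representation property then gives $R_m(y)g=\sum_i R_n(x_i)R_{m-n}(z_i)g$. Applying the isometric identity $R_n(x_i)^*R_n(x)=\sigma(\langle x_i,x\rangle)$ reduces each summand to $\langle \sigma(\langle x_i,x\rangle)h,R_{m-n}(z_i)g\rangle$; since the left vector lies in $\hilb{H}$, the co-extension identity $P_{\hilb{H}}R_{m-n}(z_i)g = T_{m-n}(z_i)g$ from Lemma \ref{coexLemma} rewrites it as $\langle \sigma(\langle x_i,x\rangle)h,T_{m-n}(z_i)g\rangle$, an expression involving only $T$ and $\sps{X}$. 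The identical formula applies to $R'$, yielding the equality. Hence $U$ extends to an isometry, which is surjective by minimality of $R'$, so it is unitary; the intertwining $UR_n(x)=R'_n(x)U$ then follows from another application of the product rule on generators.

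The main obstacle is the cross-fiber case ($n\neq m$) of the inner product computation, which hinges on rewriting $R_m(y)$ as a genuine operator product $R_n(x_i)R_{m-n}(z_i)$. This step requires the product maps to be \emph{unitary} and not merely isometric, as they would be in an arbitrary subproduct system — without surjectivity of $U_{n,m-n}$ one cannot express an arbitrary $y\in\sps{X}_m$ through lower-fiber data, and the inductive reduction to $T$ collapses. This is precisely why Section \ref{embedding section} first embeds the given subproduct system into a product system before we turn to dilations here.
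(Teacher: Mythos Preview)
Your argument is correct and considerably more explicit than the paper's own treatment, which simply records that the proposition ``follows easily from the existence of a minimal isometric dilation of representations of $W^*$-correspondences, shown in \cite[Theorem and Definition 2.18]{Muhly2002}.'' That cited result already gives both existence and uniqueness of the minimal isometric dilation for a \emph{single} $W^*$-correspondence, and since a product system over $\mathbb{N}$ is determined (up to isomorphism) by its fiber $\sps{X}_1$, the passage to the full system is automatic. You instead rederive everything along Sz.-Nagy lines: take any isometric dilation, cut down to the cyclic subspace, and prove uniqueness by computing that the inner products $\langle R_n(x)h,R_m(y)g\rangle$ depend only on $T$. This buys you a self-contained proof and, usefully, makes transparent exactly where unitarity of the product maps enters --- your closing paragraph on that point is on the mark.

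One small technical caveat: when you write $y=U_{n,m-n}\bigl(\sum_i x_i\otimes z_i\bigr)$ you are tacitly treating an arbitrary element of the $W^*$-tensor product $\sps{X}_n\otimes\sps{X}_{m-n}$ as a finite sum of simple tensors, which need not hold after passage to the self-dual completion (cf.\ Remark~\ref{pure tensor remark}). The cleanest fix is to phrase the computation through $\tilde{R}$: from (\ref{tabeq}) and unitarity of $U_{n,m-n}$ one has $\tilde{R}_m=\tilde{R}_n\circ(I\otimes\tilde{R}_{m-n})\circ(U_{n,m-n}^*\otimes I_{\hilb{K}})$, whence
\[
\langle R_n(x)h,\,R_m(y)g\rangle
=\langle x\otimes h,\,(I\otimes\tilde{R}_{m-n})(U_{n,m-n}^*y\otimes g)\rangle
=\langle x\otimes\tilde{T}_{m-n}^*h,\,U_{n,m-n}^*y\otimes g\rangle,
\]
using first that $\tilde{R}_n$ is an isometry and then the dilation identity $\restr{\tilde{R}_{m-n}^*}{\hilb{H}}=\tilde{T}_{m-n}^*$. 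This reaches your conclusion without any appeal to density.
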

		This follows easily from the existence of a minimal isometric dilation of representations of $W^*$-correspondences, shown in \cite[Theorem and Definition 2.18]{Muhly2002}.
		\begin{subsecRemark}
		It is actually true that any isometric dilation of $T$ decomposes as a direct sum of isometric representations, with a copy of the minimal one being a summand. This can be verified  by noting, that if $S$ is a dilation of $T$, the subspace $\hilb{K}= \bigvee_{x\in \sps{X}}S(x)\hilb{H}$ will be reducing for all $S(x)$, and thus the restriction of $S$ to $\hilb{K}$ will be minimal. It is then easy to see that the remaining summand must also be isometric.
		So if we fix $R$ to be a minimal isometric dilation of $T$, we may assume without loss of generality that any isometric dilation $S$ of $T$ can be written as $S = R\oplus \alpha$ for some isometric representation $\alpha$.
		\end{subsecRemark}
		
		Next is a sort of lifting lemma for free semigroups with relations in a graph, which have the IDP. Given a product system indexed by a semigroup $\sgp{S}$, which has a subsemigroup $\sgp{R}$, we denote by $\restr{\sps{X}}{\sgp{R}}$ the product system $\setind{\sps{X}}{r}{R}$, with the product maps being the same as in $\sps{X}$. In the same way, for a representation $T$, we denote by $\restr{T}{\sgp{R}}$ its restriction to $\restr{\sps{X}}{\sgp{R}}$.
		\begin{subsecLemma}\label{liftingLemma}
		Let $\sgp{S}:=\sgp{S}(\gr{G})$ be the free semigroup with relations in $\gr{G}$, and assume further that $\sgp{S}$ has the IDP. Let $r$ be a letter in the alphabet of $\sgp{S}$ (that is, a vertex in $\gr{G}$), and let $\sgp{R}$ be the unital semigroup it generates (that is, $\sgp{R}$ is the free semigroup with relations in the graph of a single vertex, $r$, which is isomorphic to $\mathbb{N}$). Let $\sps{X}$ be a product system indexed by $\sgp{S}$, and $T$ a representation on a Hilbert space $\hilb{H}$. If $\restr{T}{\sgp{R}}$ has an isometric dilation $W$ on a Hilbert space $\hilb{K}\supseteq\hilb{H}$, then there is an isometric dilation $V$ of $T$, on a Hilbert space $L\supseteq\hilb{K}$, such that $\restr{V}{\sgp{R}}$ is a dilation of $W$.
		\end{subsecLemma}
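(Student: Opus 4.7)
The plan is to invoke the IDP of $\sgp{S}$ twice. First, apply it directly to $T$ to obtain some isometric dilation $V'$ of $T$ on a Hilbert space $L' \supseteq \hilb{H}$. Its restriction $\restr{V'}{\sgp{R}}$ is an isometric dilation of $\restr{T}{\sgp{R}}$, but need not coincide with the prescribed $W$. The idea is to splice $V'$ and $W$ together along their common minimal isometric dilation of $\restr{T}{\sgp{R}}$, obtain a (generally non-isometric) representation of $\sps{X}$ that extends both, and then apply the IDP once more to repair the loss of isometry.

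Since $\sgp{R} \cong \mathbb{N}$, Proposition \ref{minimalProp} together with the subsequent remark provides a minimal isometric dilation $R$ of $\restr{T}{\sgp{R}}$ on some $M \supseteq \hilb{H}$, along with orthogonal decompositions $\restr{V'}{\sgp{R}} \cong R \oplus \alpha$ on $L' = M \oplus L_\alpha$ and $W \cong R \oplus \beta$ on $\hilb{K} = M \oplus K_\beta$, where the identification on the $R$-summand is the identity on $\hilb{H}$. Set $L_1 := M \oplus L_\alpha \oplus K_\beta$, which contains both $L'$ and $\hilb{K}$ in the obvious way. Define a representation $\tilde{V}$ of $\sps{X}$ on $L_1$ by
\[
\tilde{V}_r := R_r \oplus \alpha_r \oplus \beta_r,
\]
and, for each letter $v \neq r$, by $\restr{\tilde{V}_v}{L'} := V'_v$ and $\restr{\tilde{V}_v}{K_\beta} := 0$, extending to all $s \in \sgp{S}$ via the product maps of $\sps{X}$.

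The key point is that $\tilde{V}$ is a well-defined representation: the commutation relations encoded by $\gr{G}$ reduce, for a pair of letters neither of which is $r$, to those already satisfied by $V'$ on $L'$ (since $\tilde{V}_v$ annihilates $K_\beta$ for $v \neq r$), and for a pair $v \neq r, r$ with $\{v,r\} \in E(\gr{G})$, they hold on $L'$ by the corresponding property of $V'$ and on $K_\beta$ because both $\tilde{V}_v \tilde{V}_r$ and $\tilde{V}_r \tilde{V}_v$ vanish there ($\tilde{V}_r$ preserves $K_\beta$ only to have its image immediately annihilated by $\tilde{V}_v$). Lemma \ref{coexLemma} then yields at once that $\tilde{V}$ dilates $T$ (because $\restr{\tilde{V}}{L'} = V'$ does and $K_\beta \perp \hilb{H}$) and that $\restr{\tilde{V}}{\sgp{R}}$ dilates $W$ (because $\restr{\tilde{V}_s}{\hilb{K}} = W_s$ for all $s \in \sgp{R}$ while $L_\alpha \subseteq \hilb{K}^\perp$).

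Finally, $\tilde{V}$ is generally not isometric, since the identity $\tilde{V}_v(x)^* \tilde{V}_v(y) = \tilde{V}_e(\langle x, y\rangle)$ fails on $K_\beta$ for $v \neq r$. Applying the IDP of $\sgp{S}$ to $\tilde{V}$ produces an isometric dilation $V$ of $\tilde{V}$ on some Hilbert space $L \supseteq L_1$; by transitivity of the dilation relation, $V$ is then an isometric dilation of $T$ with $\hilb{H} \subseteq L$, and $\restr{V}{\sgp{R}}$ is a dilation of $W$ with $\hilb{K} \subseteq L$, exactly as required. I expect the main technical obstacle to be the well-definedness verification for the merged $\tilde{V}$, which hinges on the observation that the adjoined complement $K_\beta$ interacts with $\sps{X}$ only through the subsemigroup $\sgp{R}$.
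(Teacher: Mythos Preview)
Your approach is essentially the paper's, and it is correct. With your notation, $V'=Q$, $R=W^0$, $\alpha=W^2$, $\beta=W^1$, and your merged representation $\tilde{V}$ is precisely the paper's $Q\oplus W^1$, where $W^1$ has been extended to all of $\sps{X}$ by declaring $W^1_s=0$ for $s\notin\sgp{R}$.

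The one place where the paper's presentation is cleaner is exactly the step you flag as the main technical obstacle. Rather than defining $\tilde{V}$ letter by letter and then checking that the extension ``via the product maps'' is consistent with the relations of $\gr{G}$, the paper observes that $\tilde{V}$ is a \emph{direct sum} of two representations of the full product system $\sps{X}$: the isometric dilation $V'$ on $L'$, and the trivial extension-by-zero of $\beta$ on $K_\beta$. The only thing to verify is then that this extension-by-zero of $\beta$ is itself a representation of $\sps{X}$, i.e.\ that $\beta_{st}(U_{s,t}(x\otimes y))=\beta_s(x)\beta_t(y)$ for all $s,t\in\sgp{S}$; this holds because if either $s$ or $t$ lies outside $\sgp{R}$ then so does $st$, making both sides zero. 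Once that is done, $\tilde{V}=V'\oplus\beta$ is automatically a representation, and your worry about ``commutation relations'' evaporates. You should also state $\tilde{V}_e = V'_e\oplus\beta_e$ explicitly, which you omitted.
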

		\begin{proof}
		According to Proposition \ref{minimalProp}, $W = W^0\oplus W^1$, where $W^0$ is the minimal isometric dilation of $\restr{T}{\sgp{R}}$, acting on a Hilbert space $\hilb{H_0}\supseteq\hilb{H}$ (and $W^1$ is isometric, acting on some Hilbert space $\hilb{H_1}$). Since $\sgp{S}$ has the IDP, $T$ has an isometric dilation $Q$. $\restr{Q}{\sgp{R}}$ also decomposes as a direct sum, and without loss of generality, it decomposes to $\restr{Q}{\sgp{R}} = W^2\oplus W^0$, where $W^2$ is isometric, acting on $\hilb{H_2}$. 
		We would like to define a representation of $\sps{X}$ by $Q\oplus W^1$, but to make sense of this we must first extend $W^1$ to all of $\sps{X}$. We do this in the most trivial way possible, by defining $W^1_s:=0$ for all $s\notin\sgp{R}$. We need to show that this retains the properties of a representation: the only property that might not hold is
		\[
		W^1_{st}(U_{s,t}(x\otimes y)) = W^1_s(x)W^1_t(y).
		\]
		If both $s$ and $t$ are in $\sgp{R}$, this obviously holds, as $W^1$ started out as a representation. If one of them is not in $\sgp{R}$, then the RHS is $0$. We must show that in this case, so is the LHS. But this is obvious, as $st$ cannot be in $\sgp{R}$ - any presentation of it will have letters other than $r$.
		
		Having extended $W^1$, we consider $Q\oplus W^1$. This is a representation of $\sps{X}$, and furthermore a dilation of $Q$, thus a dilation of $T$. Since $\sgp{S}$ has the IDP, it has an isometric dilation $V$, acting on a Hilbert space $\hilb{L}$. $\hilb{L}$ contains $\hilb{H_0}\oplus \hilb{H_1} = \hilb{K}$. $V$ is also an isometric dilation of $T$, by virtue of being an isometric dilation of $Q\oplus W^1$. When restricted to $\sgp{R}$, it is an isometric dilation of $\restr{(Q\oplus W^1)}{\sgp{R}}$, which is precisely $W^2\oplus W^0\oplus W^1$, and is thus an isometric dilation of $W^0\oplus W^1 = \restr{T}{\sgp{R}}$. Thus we are done.
		\end{proof}
	\subsection{IDP for free semigroups with relations in acyclic graphs}
		Some preparation is still in order.
		\begin{subsecLemma}\label{damnLemma}
		Let $\corr{E},\corr{F}$ be $W^*$-correspondences over a von Neumann algebra $\vn{M}$, with c.c. representations $(\sigma,T),(\sigma,S)$ on $\bdd{H}$, respectively. Then there exists a c.c. representation $(\sigma,V)$ of $\corr{E}\otimes\corr{F}$ which satisfies $V(e\otimes f) = T(e)S(f)$ for all $e\in\corr{E},f\in\corr{F}$.
		\end{subsecLemma}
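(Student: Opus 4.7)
The plan is to use the bijection from Lemma \ref{bijection lemma}, which translates constructing a c.c. representation $(\sigma,V)$ into constructing a contraction $\tilde V\colon (\corr{E}\otimes\corr{F})\otimes_\sigma\hilb{H}\to\hilb{H}$ satisfying the appropriate intertwining condition. The representations $(\sigma,T)$ and $(\sigma,S)$ are given, so via the same bijection we already have contractions $\tilde T\colon \corr{E}\otimes_\sigma\hilb{H}\to\hilb{H}$ and $\tilde S\colon \corr{F}\otimes_\sigma\hilb{H}\to\hilb{H}$ intertwining the left $\vn{M}$-actions with $\sigma$. The natural candidate is the composition
\[
\tilde V \;=\; \tilde T\circ(I_{\corr{E}}\otimes\tilde S),
\]
viewed through the standard associativity identification $(\corr{E}\otimes\corr{F})\otimes_\sigma\hilb{H}\cong \corr{E}\otimes_\sigma(\corr{F}\otimes_\sigma\hilb{H})$ of interior tensor products.

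First I would spell out that $\corr{F}\otimes_\sigma\hilb{H}$ carries a natural left $\vn{M}$-action $\pi(a)(f\otimes h)=af\otimes h$, and that the intertwining property $\tilde S(af\otimes h)=\sigma(a)\tilde S(f\otimes h)$ is exactly the statement that $\tilde S$ is an $\vn{M}$-$\mathbb{C}$ correspondence map from $\corr{F}\otimes_\sigma\hilb{H}$ (with the action $\pi$) to $\hilb{H}$ (with the action $\sigma$). This is precisely what is needed so that Proposition \ref{tensorProp} applies, yielding a contractive, adjointable bimodule map
\[
I_{\corr{E}}\otimes\tilde S\colon \corr{E}\otimes(\corr{F}\otimes_\sigma\hilb{H})\longrightarrow \corr{E}\otimes_\sigma\hilb{H},
\]
characterized on pure tensors by $(I_{\corr{E}}\otimes\tilde S)(e\otimes(f\otimes h))=e\otimes\tilde S(f\otimes h)$.

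Next I would invoke the associativity of the interior tensor product (standard, e.g.\ Chapter 4 in \cite{Lance1995}; the $W^*$ version follows from Remark \ref{pure tensor remark} and the fact that the isomorphism is isometric on pure tensors) to identify $(\corr{E}\otimes\corr{F})\otimes_\sigma\hilb{H}$ with $\corr{E}\otimes(\corr{F}\otimes_\sigma\hilb{H})$ via $(e\otimes f)\otimes h\mapsto e\otimes(f\otimes h)$. Under this identification, $\tilde V:=\tilde T\circ(I_{\corr{E}}\otimes\tilde S)$ is a composition of two contractions and thus a contraction on $(\corr{E}\otimes\corr{F})\otimes_\sigma\hilb{H}$. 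A direct check on pure tensors gives
\[
\tilde V((e\otimes f)\otimes h)=\tilde T(e\otimes \tilde S(f\otimes h))=\tilde T(e\otimes S(f)h)=T(e)S(f)h,
\]
and a similar verification shows the required intertwining $\tilde V(a(e\otimes f)\otimes h)=\sigma(a)\tilde V((e\otimes f)\otimes h)$, since $I_{\corr{E}}\otimes\tilde S$ is a bimodule map and $\tilde T$ intertwines left actions with $\sigma$.

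Finally, applying the bijection of Lemma \ref{bijection lemma} in the reverse direction yields the desired c.c.\ representation $(\sigma,V)$ of $\corr{E}\otimes\corr{F}$; the formula $V(e\otimes f)=T(e)S(f)$ follows from the computation above together with $\tilde V(x\otimes h)=V(x)h$. I expect no real obstacle here: the only subtle point is making sure that $\tilde S$ qualifies as an input to Proposition \ref{tensorProp} so that $I_{\corr{E}}\otimes\tilde S$ makes sense, and that the associativity identification is used consistently, but both are standard once the left $\vn{M}$-action on $\corr{F}\otimes_\sigma\hilb{H}$ is made explicit.
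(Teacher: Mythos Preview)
Your proposal is correct and follows exactly the same approach as the paper: define $\tilde V=\tilde T\circ(I_{\corr{E}}\otimes\tilde S)$ and invoke Lemma \ref{bijection lemma} to obtain $(\sigma,V)$, then check on pure tensors that $V(e\otimes f)=T(e)S(f)$. You have simply spelled out in more detail the points the paper leaves implicit (the associativity identification, the justification of $I_{\corr{E}}\otimes\tilde S$ via Proposition \ref{tensorProp}, and the intertwining condition needed to apply Lemma \ref{bijection lemma}).
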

		\begin{proof}
		We define:
		\[U:=\tild{T}\circ (I_\corr{E}\otimes \tild{S}).
		\]
		Using the bijection in Lemma \ref{bijection lemma}, we see that there is a representation $(\sigma,V)$ such that $\tild{V} = U$. It is now easy to check that this is the map that we wanted: for all $e\in\corr{E}, f\in\corr{F},h\in \hilb{H}$
		\[
		V(e\otimes f)h = \tild{V}(e\otimes f\otimes h) = \tild{T}\circ (I_\corr{E}\otimes \tild{S})(e\otimes f\otimes h) = T(e)S(f)h
		\]
		\end{proof}
		
		The next lemma is about bounded chains of co-extensions in $\bdd{\hilb{H}}$, which we will use in a limiting procedure in the proof of the main theorem of this section.
		
		\begin{subsecLemma}
		\label{chainLemma}
		Let $T_0,T_1,\dots$ be operators on Hilbert spaces $\hilb{H_0}\subseteq\hilb{H_1}\subseteq\dots$, respectively, which satisfy $\norm{T_n}\leq c$ for some constant $c$, for all $n\in\mathbb{N}$. Assume further that $T_{n+1}$ is a co-extension of $T_n$ for all $n\in\mathbb{N}$. Let $\hilb{H}:=\bigvee_{n=0}^{\infty}\hilb{H_n}$. Then there is a unique operator $T\in\bdd{H}$ which is a co-extension of $T_n$ for all $n\in\mathbb{N}$. Furthermore, $T$ is the SOT limit of the sequence $\setind{T_nP}{n}{\mathbb{N}}$, where $P_n$ is the orthogonal projection on $\hilb{H_n}$, and satisfies $\norm{T}\leq c$. If we have also that $T_n$ is an isometry for all $n$, then $T$ is an isometry as well.
		\end{subsecLemma}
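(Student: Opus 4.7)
The plan is to build $T$ as the strong limit of the sequence $T_n P_n$. The starting observation is that for any $h\in\hilb{H_m}$ and $n\geq m$, the co-extension hypothesis (applied iteratively) gives an orthogonal decomposition $T_{n+1}h = T_n h + b_n$ with $b_n \perp \hilb{H_n}$, so $\|T_{n+1}h\|^2 = \|T_n h\|^2 + \|b_n\|^2$. Combined with the uniform bound $\|T_n h\|\leq c\|h\|$, the sequence $\|T_n h\|^2$ is monotone nondecreasing and bounded, hence converges; this forces $\|T_{n'}h - T_n h\|\to 0$, so $\{T_n h\}$ is Cauchy. I would set $Th$ to be this norm limit and extend by density and the uniform bound $\|T_n P_n\|\leq c$, producing $T\in\bdd{H}$ with $\|T\|\leq c$. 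A standard $3\epsilon$-argument (using density of $\bigcup_n\hilb{H_n}$ and the uniform norm bound) then upgrades pointwise convergence on this dense set to strong convergence $T_n P_n \to T$.

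Next I would verify that $T$ is a co-extension of each $T_m$. For the compression property, iterating the co-extension condition gives $P_m T_n h = T_m h$ for all $h\in\hilb{H_m}$ and $n\geq m$; passing to the limit yields $P_m T|_{\hilb{H_m}} = T_m$. For $T^*$-invariance of $\hilb{H_m}$, it suffices to show $P_m T k = 0$ whenever $k\perp\hilb{H_m}$. By density it is enough to take $k \in \hilb{H_N}\ominus\hilb{H_m}$ with $N\geq m$; for $n\geq N$ one has $P_m T_n k = P_m P_N T_n k = P_m T_N k = T_m P_m k = 0$, using the co-extension property of $T_n$ over $T_N$ and of $T_N$ over $T_m$. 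Taking $n\to\infty$ gives $P_m T k = 0$. Uniqueness of $T$ is immediate: if $T'$ is another such co-extension then $T'^* = T_n^* = T^*$ on each $\hilb{H_n}$, so $T'^*=T^*$ on the dense set $\bigcup_n\hilb{H_n}$, whence $T'=T$.

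Finally, for the isometric case, assuming each $T_n$ is an isometry, I would compute for $h,k\in\hilb{H_m}$:
\[
\langle Th, Tk\rangle = \lim_{n\to\infty}\langle T_n h, T_n k\rangle = \lim_{n\to\infty}\langle h,k\rangle = \langle h,k\rangle,
\]
using that $T_n^*T_n = I_{\hilb{H_n}}$. By density of $\bigcup_n\hilb{H_n}$ in $\hilb{H}$ and continuity of the inner product, $T^*T = I_{\hilb{H}}$, so $T$ is an isometry. The main obstacle is really only bookkeeping: one has to juggle several co-extension relations at different indices to extract both the monotone-bounded-therefore-convergent argument and the $T^*$-invariance of $\hilb{H_m}$, but no single step is deep once the orthogonal decomposition $T_{n+1}h = T_n h + b_n$ with $b_n\perp\hilb{H_n}$ is exploited.
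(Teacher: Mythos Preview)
Your proof is correct, but the paper takes a slicker route. Rather than building $T$ as a strong limit, the paper observes that the co-extension hypothesis says precisely that $T_{n+1}^*|_{\hilb{H_n}} = T_n^*$, so one can define an operator $S$ on $\bigcup_n \hilb{H_n}$ by $Sh := T_n^* h$ for $h\in\hilb{H_n}$; this is well-defined, bounded by $c$, and extends to $\hilb{H}$. Setting $T := S^*$ immediately gives a co-extension of every $T_n$, and the SOT convergence $T_nP_n\to T$ then falls out of the identity $T_nP_n = P_nTP_n$. This adjoint-first viewpoint bypasses both your Cauchy/monotonicity argument and the separate verification of compression and $T^*$-invariance. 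Your approach, by contrast, is essentially the argument the paper uses for the companion Lemma~\ref{chainLemma2} (where $T_{n+1}$ is merely a \emph{dilation} of $T_n$ and the adjoint trick is unavailable), so it is a perfectly natural and valid alternative---just a bit more laborious here than necessary.
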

		\begin{proof}
		We want to first define an operator $S$, extending $T_n^*$ for all $n$, and then take $T = S^*$. Since we want $T^*$ to extend $T_n^*$ for all $n$, $S$ must satisfy $Sh = T_n^*h$ for all $h\in\hilb{H_n}$. Because $\restr{T_{n+1}^*}{\hilb{H}_n} = T_n^*$, this definition yields a well defined operator on $\bigcup_{n=0}^{\infty}\hilb{H_n}$. This operator satisfies $\norm{Sh}\leq c\norm{h}$ for all $h\in \bigcup_{n=0}^{\infty}\hilb{H_n}$, because $\norm{T_n^*}\leq c$ for all $n$. Thus, $S$ extends uniquely to a bounded operator on $\hilb{H}$. We take $T := S^*$, and note that it is indeed a co-extension of $T_n$ for all $n$. It is also clear that it unique, and that $\norm{T} = \norm{S} \leq c$.
		
		To show that it is the SOT limit of the sequence $T_nP_n$ in $\bdd{H}$, we observe that the fact that $T$ is a co-extension of $T_n$ implies, in particular, that
		\[
		T_nP_n = P_nTP_n
		\]
		for all $n$. But the RHS converges in SOT to $T$, thus so does the LHS, as required.
		
		Lastly, if $T_n$ was an isometry for all $n$, then for any $h\in\bigcup_{n=0}^{\infty}\hilb{H_n}$, for $n$ large enough that $h\in\hilb{H}_n$, we will have $\norm{T_nP_nh} = \norm{h}$. Thus, $\norm{Th} = \norm{h}$, and it is easy to see that if an operator is isometric on a dense set, it is isometric.
		\end{proof}
		\begin{subsecCor}\label{chainCor}
		Let $\set{{T}^{n}}_{n\in\mathbb{N}}$ be a sequence of representations of a product system $\setind{\sps{X}}{s}{\sgp{S}}$, on Hilbert spaces $\hilb{H_1}\subseteq\hilb{H_2}\subseteq\dots$, such that $T^{n+1}$ is a dilation of $T^n$ for all $n$. Then there exists a limit representation $T$ of $\sps{X}$ on $\hilb{H}:=\bigvee_n H_n$, which satisfies:
		\begin{enumerate}
		\item 
		For any $x\in\sps{X}$, $T^n(x)P_n\overset{SOT}{\longrightarrow}T(x)$, where $P_n$ is the orthogonal projection on $\hilb{H_n}$.
		\item
		$T$ is a dilation of $T^n$ for all $n$.
		\item
		$\tild{T^n_s}Q_n\overset{SOT}{\longrightarrow}\tild{T}_s$ for all $s\in\sgp{S}$, where $Q_n$ is the orthogonal projection $\sps{X}_s\otimes_{\sigma}\hilb{H}\rightarrow\sps{X}_s\otimes_{\sigma}\hilb{H_n}$ 
		\end{enumerate}
		Furthermore, if $T^n$ are isometric for all $n$, then $T$ is isometric.
		\end{subsecCor}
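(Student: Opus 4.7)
The plan is to apply Lemma \ref{chainLemma} pointwise on the product system. Fix $s\in\sgp{S}$ and $x\in\sps{X}_s$. Since $T^{n+1}$ is a dilation of $T^n$, Lemma \ref{coexLemma} says $T^{n+1}_s(x)$ is a co-extension of $T^n_s(x)$; moreover $\|T^n_s(x)\|\leq \|x\|$ by complete contractivity of $(T^n_e,T^n_s)$ (since $\|\tild{T^n_s}\|\leq 1$). Lemma \ref{chainLemma} then produces a unique $T_s(x)\in\bdd{H}$ which is a co-extension of every $T^n_s(x)$ and is the SOT limit of $T^n_s(x)P_n$, with $\|T_s(x)\|\leq \|x\|$. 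This immediately gives (1), and once we know $T:=\setind{T}{s}{S}$ is a representation, Lemma \ref{coexLemma} yields (2).

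Next I would check the representation axioms by passing to SOT limits in the corresponding identities for $T^n$. Linearity in $x$ and the bimodule identities $T_s(ax)=T_e(a)T_s(x)$, $T_s(xa)=T_s(x)T_e(a)$ pass through SOT limits directly. For $T_e$ to be a normal $*$-homomorphism, observe that since $T^{n+1}_e(a)$ is self-adjoint when $a$ is and satisfies $P_n T^{n+1}_e(a)|_{\hilb{H}_n^\perp}=0$, each $\hilb{H}_n$ is reducing for $T^m_e(a)$ for $m\geq n$, so $T_e(a)$ restricts to $T^n_e(a)$ on $\hilb{H}_n$; multiplicativity, $*$-preservation, and normality then transfer to $T_e$ using the density of $\bigcup_n\hilb{H}_n$ together with the uniform bound $\|T_e(a)\|\leq \|a\|$. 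For the product identity $T_{st}(U(x\otimes y))=T_s(x)T_t(y)$, I would compute, for any $h\in\hilb{H}$,
\[
T_{st}(U(x\otimes y))h=\lim_m T^m_{st}(U(x\otimes y))P_m h=\lim_m T^m_s(x)T^m_t(y)P_m h,
\]
and show the right-hand side converges to $T_s(x)T_t(y)h$ by splitting $T^m_s(x)v_m - T_s(x)v = T^m_s(x)(v_m-v) + (T^m_s(x)-T_s(x))v$ with $v_m=T^m_t(y)P_m h\to T_t(y)h=v$, using the uniform norm bound $\|T^m_s(x)\|\leq \|x\|$ and the SOT convergence $T^m_s(x)P_m\to T_s(x)$ (which upgrades to $T^m_s(x)\to T_s(x)$ in SOT since $P_m\to I$ strongly and norms are uniformly bounded).

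For (3), I would first verify the convergence on pure tensors: for $h\in \hilb{H}_m$ and $n\geq m$ one has $\tild{T^n_s}Q_n(x\otimes h)=T^n_s(x)h$, which tends to $T_s(x)h=\tild{T}_s(x\otimes h)$ by (1). Since $\bigcup_n(\sps{X}_s\otimes_\sigma\hilb{H}_n)$ is dense in $\sps{X}_s\otimes_\sigma\hilb{H}$ and $\|\tild{T^n_s}Q_n\|\leq 1$ uniformly, a standard $\varepsilon/3$-argument extends the convergence to SOT on the full space, simultaneously showing $\|\tild{T}_s\|\leq 1$; thus $(T_e,T_s)$ is a c.c. representation, closing the loop. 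The isometric case is analogous: if each $\tild{T^n_s}$ is an isometry, then on the dense subspace above one has $\|\tild{T}_s(x\otimes h)\|=\lim\|\tild{T^n_s}Q_n(x\otimes h)\|=\|x\otimes h\|$, so $\tild{T}_s$ is isometric on a dense set and hence everywhere.

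The main technical point I expect to require care is the systematic use of uniform operator-norm bounds to upgrade pointwise convergence on the dense subspaces $\bigcup_n\hilb{H}_n$ and $\bigcup_n(\sps{X}_s\otimes_\sigma\hilb{H}_n)$ to SOT convergence on the full Hilbert spaces; this is what makes the product identity and the contractivity of $\tild{T}_s$ go through. Once that is handled, everything else is a routine application of Lemmas \ref{chainLemma} and \ref{coexLemma}.
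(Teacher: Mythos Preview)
Your proposal is correct and follows essentially the same strategy as the paper: apply Lemma~\ref{chainLemma} pointwise to define $T_s(x)$, pass the representation identities through SOT limits, and invoke Lemma~\ref{coexLemma} for the dilation claim. The only noteworthy difference is in verifying that $(T_e,T_s)$ is completely contractive: the paper does this via the matrix inequality $(T_s(x_i)^*T_s(x_j))\leq (T_e(\langle x_i,x_j\rangle))$ from \cite[Lemma~3.5]{Muhly1998}, whereas you argue directly that $\tild{T}_s$ is contractive as an SOT limit of contractions on a dense set; both are valid and equivalent characterizations. Likewise, for (3) the paper simply reapplies Lemma~\ref{chainLemma} to the co-extension chain $\tild{T}^n_s$ and uses uniqueness, while you run the $\varepsilon/3$ argument by hand---same content, different packaging. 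One small imprecision: normality of $T_e$ does not follow from density in $\hilb{H}$ and uniform bounds (normality is a condition on the domain $\vn{M}$); the correct reason, which your reducing-subspace observation already sets up, is that $T_e$ decomposes as a direct sum of normal representations on $\hilb{H}_n\ominus\hilb{H}_{n-1}$, exactly as the paper states.
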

		\begin{proof}
		If we fix $x\in\sps{X}$, $\set{T_n(x)}_{n\in\mathbb{N}}$ will satisfy the conditions of Lemma \ref{chainLemma}, and thus we will be able to define $T(x)$ to be the SOT limit of $T_n(x)P_n$, which will be a co-extension of $T_n(x)$, for all $n$. Therefore, according to Lemma \ref{coexLemma}, if we verify that this $T$ is indeed a representation of $\sps{X}$, we will be done. We start by verifying that for any $s,t\in\sgp{S}$ and $x\in\sps{X_s},y\in\sps{X_t}$,
		\begin{equation}\label{eq3}
		T_s(x)T_t(y) = T_{st}(U^\sps{X}(x\otimes y))
		\end{equation}
		holds.
		For any $n$, we have the similar equality $T^n_s(x)P_nT^n_t(y)P_n = T^n_{st}(U^\sps{X}(x\otimes y))P_n$, because $T^n$ is a representation. Both $T^n_s(x)$ and $T^n_t(y)$ are bounded sequences, so taking SOT limit on both sides of the equation yields (\ref{eq3}).
		
		It is easy to see that $T_e$ is in fact a direct sum of $*-$representations of $\vn{M}$, which are all normal, therefore $T_e$ itself is a normal $*-$representation. This is because for all $a\in\vn{M}$, $T^{n+1}_e(a)$ decomposes to a direct sum with $T^n_e(a)$ as a summand. Furthermore, $T_e$ is non-degenerate - for unital $C^*$-algebras (and in particular von Neumann algebras), representations are non-degenerate if and only if they are unital, and we see $T_e$ is unital because $T^n_e$ were all unital.
		It remains to show that for all $s\in\sgp{S}$, $(T_e,T_s)$ is a c.c. representation of $\sps{X}_s$. 
		
		We want to use Lemma 3.5 in \cite{Muhly1998}, which states that if $T$ is bounded and covariant (that is, respects the bimodule structure with respect to $T_e$), then it is completely contractive if and only if satisfies the matrix inequality
		\begin{equation}\label{eq4}
		(T_s(x_i)^*T_s(x_j))\leq (T_e(\langle x_i,x_j\rangle))
		\end{equation}
		We first note that $T_s$ is bounded. This is because for any $x\in\sps{X_s}$, from Lemma \ref{chainLemma}, since $\norm{T^n_s(x)}\leq \norm{x}$ for all $n$, we have also that $\norm{T_s(x)}\leq\norm{x}$, which means that $\norm{T_s}\leq 1$. Now, to see that $T$ respects the correspondence structure, we note that for all $a,b\in\vn{M}$ and $x\in\sps{X_s}$, we have that $T^n_s(axb) = T^n_e(a)T^n_s(x)T^n_e(b)$, and again we may take SOT limits and get $T_s(axb) = T_e(a)T_s(x)T_e(b)$. Lastly, we want to show (\ref{eq4}) for any choice $x_1,\dots,x_n\in\sps{X_s}$. We again see that we have this inequality when replacing $T$ with $T^n$ for all $n$. We denote by $A_n$ the matrix $(T^n_s(x_i)^*T^n_s(x_j)P_n)_{i,j}\in{M_n(\bdd{H})}$. Then it is easy to see $A_n$ converges in the SOT to the LHS of (\ref{eq4}). We have the inequality 
		\[
		A_n\leq (T^n_e(\langle x_i,x_j\rangle)P_n),
		\]
		but $T^n_e$ is a direct summand in $T_e$, so obviously 
		\[
		(T^n_e(\langle x_i,xj\rangle)P_n)\leq (T_e(\langle x_i,x_j\rangle)).
		\]
		So really we have $A_n\leq (T_e(\langle x_i,xj\rangle))$, and this kind of inequality survives SOT limits. 
		
		To prove (3), we note that it is clear that for any $s\in\sgp{S}$, $\tild{T}^{n+1}_s$ is a co-extension of $\tild{T}^n_s$ for all $n$, thus applying Lemma \ref{chainLemma} again, we get that they have a limit co-extension, and by uniqueness we conclude that it is $\tild{T}$.
		
		Lastly, we note that if $T^n$ is isometric for all $n$, then this simply means that $\tild{T}^n_s$ is isometric for all $s\in\sgp{S}$. But invoking the last part of Lemma \ref{chainLemma}, we see that this implies that $\tild{T}_s$ is isometric, thus $T$ is isometric.
		\end{proof}
		
		Next is a lemma to help us verify when representations are isometric.
		\begin{subsecLemma}\label{isometricLemma}
		Let $\sps{X}$ be a product system indexed by a unital semigroup $\sgp{S}$, $T$ a representation of $\sps{X}$ on $\hilb{H}$, and denote $\sigma:=T_e$. Let $\alp{A}$ be a set, such that $\alp{A}\cup\set{e}$ generates $\sgp{S}$. Then $T$ is isometric if and only if $(\sigma,T_a)$ is isometric for all $a\in\alp{A}$.
		\end{subsecLemma}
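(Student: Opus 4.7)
The plan is to handle one direction trivially and prove the other by induction on word length, using the fact that the product maps $U^\sps{X}_{s,t}$ are unitaries (not merely co-isometries) since $\sps{X}$ is a product system.

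The forward direction is immediate from the definition: if $T$ is isometric then in particular $(\sigma,T_a)$ is isometric for every $a\in\alp{A}$. For the converse, I will use the characterization from \cite[Lemma 3.5]{Muhly1998} that $(\sigma,T_s)$ is isometric if and only if $\tild{T}_s\colon \sps{X}_s\otimes_\sigma\hilb{H}\to\hilb{H}$ is an isometry. So it suffices to show that the set
\[
\sgp{I} := \{\, s\in\sgp{S} : \tild{T}_s \text{ is an isometry}\,\}
\]
coincides with $\sgp{S}$. Since $\alp{A}\cup\{e\}$ generates $\sgp{S}$, and since $\tild{T}_e=\sigma$ is automatically isometric, the set $\sgp{I}$ will equal $\sgp{S}$ once I show it is closed under the semigroup product, because by hypothesis it already contains $\alp{A}\cup\{e\}$.

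The core step is therefore: if $s,t\in\sgp{I}$, then $st\in\sgp{I}$. From the representation identity
\[
T_{st}\bigl(U^\sps{X}_{s,t}(x\otimes y)\bigr) = T_s(x)T_t(y),\qquad x\in\sps{X}_s,\ y\in\sps{X}_t,
\]
together with the computation in Lemma \ref{damnLemma} (whose factorization is $\tild{V}=\tild{T}\circ(I\otimes\tild{S})$), one obtains
\[
\tild{T}_{st}\circ\bigl(U^\sps{X}_{s,t}\otimes I_\hilb{H}\bigr) \;=\; \tild{T}_s\circ\bigl(I_{\sps{X}_s}\otimes\tild{T}_t\bigr).
\]
Because $\sps{X}$ is a product system, $U^\sps{X}_{s,t}$ is unitary, hence so is $U^\sps{X}_{s,t}\otimes I_\hilb{H}$. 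The right hand side is a composition of an isometry $\tild{T}_s$ with $I_{\sps{X}_s}\otimes\tild{T}_t$, which is itself an isometry (amplification of the isometry $\tild{T}_t$ using Proposition \ref{tensorProp}). Hence the right hand side is an isometry, which forces $\tild{T}_{st}$ to be an isometry.

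The only subtle point to be careful about is that the argument really does require $U^\sps{X}_{s,t}$ to be unitary and not merely co-isometric; this is precisely where the hypothesis that $\sps{X}$ is a product system (as opposed to a subproduct system) enters in an essential way. Modulo that observation, the proof is a clean induction on the length of a presentation of $s\in\sgp{S}$ by letters of $\alp{A}$.
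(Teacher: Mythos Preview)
Your proof is correct and follows essentially the same approach as the paper: both reduce to the identity $\tild{T}_{st}\circ(U_{s,t}\otimes I_\hilb{H}) = \tild{T}_s\circ(I_{\sps{X}_s}\otimes\tild{T}_t)$ and observe that the right hand side is a composition of isometries, forcing $\tild{T}_{st}$ to be isometric since $U_{s,t}\otimes I_\hilb{H}$ is unitary. Your framing via the subsemigroup $\sgp{I}$ is a clean packaging of the same induction the paper carries out on pairs of generators.
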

		\begin{proof}
		One implication is trivial. For the other, assume $(\sigma,T_a)$ is isometric for all $a\in\alp{A}$. It suffices to show that for all $a,b\in\alp{A}$, $(\sigma,T_{ab})$ is isometric, and proceed by induction. Denote by $U_{a,b}$ the product map $\sps{X}_a\otimes \sps{X}_b\rightarrow \sps{X}_{ab}$, then we notice that
		\begin{equation}\label{tabeq}
		\tild{T}_{ab} \circ (U_{a,b}\otimes I_{\hilb{H}}) = \tild{T}_a\circ (I_{\sps{X}_a}\otimes \tild{T}_b).
		\end{equation}
		As this is a composition of isometries, $\tild{T}_{ab}$ is an isometry, and consequently so is $T_{ab}$.
		\end{proof}
		
		We are now ready to show that the free unital semigroup on two generators has the IDP. We denote this semigroup by $\sgp{F}^+_2$, and its generators by $a,b$.
		\begin{subsecProp}\label{freeTwoProp}
		$\sgp{F}^+_2$ has the IDP.
		\end{subsecProp}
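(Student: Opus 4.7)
The plan is an interlacing construction: alternately dilate the $a$-component and the $b$-component of the representation to be isometric, and then pass to an SOT limit using Corollary \ref{chainCor}.

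A structural observation comes first. Since $\sgp{F}^+_2$ has no relations, every word has a unique presentation as a string of letters, and consequently $\sps{X}_w$ is canonically identified with an iterated tensor product $\sps{X}_{s_1}\otimes\cdots\otimes\sps{X}_{s_n}$ via the (unitary) product maps. It follows that giving a representation of $\sps{X}$ on $\hilb{H}$ is equivalent to giving a normal unital $*$-representation $\sigma$ of $\vn{M}$ together with two c.c.\ representations $(\sigma,T_a)$, $(\sigma,T_b)$ of the fibers $\sps{X}_a$, $\sps{X}_b$ sharing this $\sigma$; the value on a longer word is then forced (as a c.c.\ representation) by iterating Lemma \ref{damnLemma}.

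Now build a chain inductively. Set $T^0:=T$ on $\hilb{H}_0:=\hilb{H}$. Given $T^{n-1}$ on $\hilb{H}_{n-1}$ with normal $*$-representation $\sigma^{n-1}$, suppose $n$ is odd (the even case is symmetric with $a$ and $b$ swapped). Using the known IDP of $\mathbb{N}$, dilate the restriction $(\sigma^{n-1},T^{n-1}_a)$ to an isometric representation $(\sigma^n,V)$ of $\restr{\sps{X}}{\langle a\rangle}$ on some $\hilb{H}_n\supseteq\hilb{H}_{n-1}$. Condition (a) of the dilation definition forces $\hilb{H}_{n-1}$ to be reducing for $\sigma^n$, so $\sigma^n=\sigma^{n-1}\oplus\tau$ with respect to the decomposition $\hilb{H}_n=\hilb{H}_{n-1}\oplus\hilb{H}_{n-1}^\perp$. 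Define $T^n_a:=V$ and $T^n_b:=T^{n-1}_b\oplus 0$ with respect to this decomposition. A direct bimodule check (together with Lemma \ref{bijection lemma}) shows $(\sigma^n,T^n_b)$ is a c.c.\ representation of $\sps{X}_b$ that co-extends $T^{n-1}_b$, so by the structural observation we obtain a representation $T^n$ of the whole $\sps{X}$ on $\hilb{H}_n$, which by Lemma \ref{coexLemma} dilates $T^{n-1}$, and in which $T^n_a$ is isometric.

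Finally, pass to the limit. By Corollary \ref{chainCor}, the chain $\{T^n\}$ has a limit representation $T^\infty$ on $\hilb{H}_\infty:=\bigvee_n\hilb{H}_n$ which dilates every $T^n$, and in particular dilates $T$. By Lemma \ref{isometricLemma} applied to $\alp{A}=\{a,b\}$, it suffices to show that $(\sigma^\infty,T^\infty_a)$ and $(\sigma^\infty,T^\infty_b)$ are each isometric; consider $a$ (the $b$ case is identical). By Corollary \ref{chainCor}(3), $\tild{T}^n_a Q_n\to\tild{T}^\infty_a$ in SOT. Along the odd subsequence, $\tild{T}^n_a$ is an isometry, so for each $\xi\in\sps{X}_a\otimes_{\sigma^\infty}\hilb{H}_\infty$ we have $\norm{\tild{T}^n_a Q_n\xi}=\norm{Q_n\xi}\to\norm{\xi}$, while SOT convergence gives $\norm{\tild{T}^n_a Q_n\xi}\to\norm{\tild{T}^\infty_a\xi}$; hence $\norm{\tild{T}^\infty_a\xi}=\norm{\xi}$, so $\tild{T}^\infty_a$ is an isometry. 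The main point of delicacy is that at each finite stage exactly one of the two components is isometric while the other has just been padded by zeros, so one cannot conclude isometry stage-by-stage; the argument works because each component is isometric along an \emph{infinite} subsequence, and that is enough to force isometry of the limit via the norm/SOT comparison above.
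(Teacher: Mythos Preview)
Your proof is correct and follows essentially the same approach as the paper's: build an alternating chain of dilations that are isometric on $\langle a\rangle$ at odd stages and on $\langle b\rangle$ at even stages (padding the other generator by zero), extend to all of $\sps{X}$ via Lemma~\ref{damnLemma}, pass to the limit with Corollary~\ref{chainCor}, and conclude isometry using Lemma~\ref{isometricLemma} together with the subsequence argument. The only difference is presentational: the paper verifies the dilation property at each stage by working directly with the $\tild{T}_s$ and the explicit formula~(\ref{tilde equation}), whereas you package this into the structural observation plus Lemma~\ref{coexLemma}; both amount to the same computation.
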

		\begin{proof}
		Let $\sps{X}$ be a product system indexed by $\sgp{F}^+_2$, with representation $T:\sps{X}\rightarrow\bdd{H_0}$. Our goal is to define a sequence of dilations that will approximate an isometric dilation, and take its limit, in a manner that will be made precise. It will be convenient to identify the fibers $\sps{X}_t$ with the tensor products of the fibers associated with the letters, and to take the product maps to be the identity maps. Namely, to assume given a word $t=t_1t_2\dots t_n$, that $\sps{X}_t = \sps{X}_{t_1}\otimes \sps{X}_{t_2}\otimes\dots\otimes \sps{X}_{t_n}$. This can be justified by noting that $X$ is indeed isomorphic to the product system with these fibers, with the morphism simply being the product maps $U^{\sps{X}}_{t_1,\dots,t_n}$.
		
		We first define $T^a:=\restr{T}{\langle a\rangle}$, which is a representation of $\restr{X}{\langle a\rangle}$ (where $\langle a\rangle$ is the unital semigroup generated by $a$, which is isomorphic to $\mathbb{N}$). Then $T^a$ has an isometric dilation $S^a$, acting on $\hilb{H_1}$, where $\hilb{H_0}\subseteq\hilb{H_1}$. We now define a representation of $\restr{\sps{X}}{\langle b\rangle}$ by $S^b:= \restr{T}{\langle b\rangle}\oplus 0$, that is $S^b(x) = T(x)\oplus 0\in\bdd{\hilb{H_0}\oplus (\hilb{H_1}\ominus \hilb{H_0})}$. We are now able to define a representation $S$ of $\sps{X}$ on $\hilb{H_1}$ in the obvious way: we define $S_e := S^a_e$, $S_a := S^a_a$ and $S_b := S^b_b$, and for a presentation $t=t_1\dots t_n$, we have, according to Lemma \ref{damnLemma} (using induction on the number of letters, and noting that $U^\sps{X}:\sps{X}_{t_1}\otimes\dots\otimes\sps{X}_{t_n}\rightarrow \sps{X}_t$ is a unitary bimodule map), a map $S_t: \sps{X}_t\rightarrow \bdd{H_1}$, such that $(S_e,S_t)$ is a c.c. representation and 
		\begin{equation}\label{eq5}
		S_t(x_1\otimes\dots\otimes x_n) = S_{t_1}(x_1)\dots S_{t_n}(x_n)
		\end{equation}
		for any choice of $x_i\in\sps{X}_{t_i}$. 
		Writing down explicitly what we got from Lemma \ref{damnLemma}, we see that 
		\begin{equation} \label{tilde equation}
		\tild{S}_t = \tild{S}_{t_1}\circ (I\otimes \tild{S}_{t_2}) \circ (I\otimes \tild{S}_{t_3})\circ\dots \circ (I\otimes \tild{S}_{t_n})
		\end{equation}
		
		We claim that $\set{S_t}_{t\in\sgp{F}^+_2}$ is a representation of $\sps{X}$, and a dilation of $T$. We need verify two things: To complete showing that it is a representation, we need to show that for all $t,r\in\sgp{F}^+_2$, $x\in\sps{X}_t$, $y\in\sps{X}_r$ we have $S_{tr}(x\otimes y) = S_t(x)S_r(y)$. To show that it is a dilation of $T$, we need to show that $\tild{S}_t$ is a co-extension of $\tild{T}_t$ for all $x\in\sps{X},t\in\sgp{F}^+_2$.
		
		We start with the first. We want to define a representation $R$ of $\sps{X}_t\otimes\sps{X}_r$, satisfying $R(x\otimes y) = S_t(x)S_r(y)$. We do this in a seemingly roundabout way: we take the map $\tild{S}_t\circ (I\otimes \tild{S}_r)$, and using the bijection in Lemma \ref{bijection lemma} yields $R$. Now, because of (\ref{eq5}), $R$ and $S_{tr}$ coincide on pure tensors (i.e, elements of the form $x_{t_1}\dotimes x_{t_n}\otimes x_{r_1}\dotimes x_{r_k}$). These are both bounded module maps, thus by Remark \ref{pure tensor remark}, we get that $S_{tr}=R$, which in particular implies $S_{tr}(x\otimes y) = S_t(x)S_r(y)$ for all $x,y$.
		
		For the latter, we recall that for any letter $r\in \{a,b\}$, we have that $\restr{\tild{S}_r^*}{\hilb{H_0}} = \tild{T}^*_r$. Thus it is easy to see that Equation \ref{tilde equation} implies that for a general word $t=t_1\dots t_n$, $\restr{\tild{S}^*_t}{\hilb{H_0}} = \tild{T}^*_t$.
		
		Finally, we may proceed in building our sequence of dilations. We define $V^0=T$, $V^1:=S$, and continue this construction inductively: We build a new $S$, reversing the roles of $a$ and $b$: This time we take $S^b$ to be an isometric dilation of $\restr{V^1}{\langle b \rangle}$ and $S^a$ to be $V^1\oplus 0$. We then define $S$ as before, and define $V^2:=S$, acting on $\hilb{H_2}$. We get that $V^2$ is a dilation of $V^1$. We do this inductively, at each step reversing the roles of $a$ and $b$, and get a sequence $V^1,V^2,\dots$ of representations, each one a dilation of its predecessors. At the odd steps, $\restr{V^n}{\langle a \rangle}$ is isometric, and at the even steps $\restr{V^n}{\langle b \rangle}$ is isometric. We get, according to Corollary \ref{chainCor}, a limit representation, which we will call $V$, which is a dilation of $V^n$ for all $n$, and in particular of $T$. It remains only to show that $V$ is isometric. We denote for simplicity $\sigma_n = V^n_e$ and $\sigma = V_e$.
		
		We see that according to Lemma \ref{isometricLemma}, it suffices to show that $(\sigma,V_a)$ and $(\sigma,V_b)$ are isometric. As we have noted, $(\sigma_{2n}, V^{2n}_b)$ is isometric for all $n$. By Corollary \ref{chainCor}, we have that $\tild{V_b}$ is the SOT limit of $\tild{V}^{2n}_bQ_n$, thus it is easy to conclude that $\tild{V_b}$ is isometric (this is in fact, the last part of Lemma \ref{chainLemma}).
		
		For $a$, we do the same thing, but with the subsequence $V^{2n+1}_a$.
		\end{proof}
		
		We now arrive at the main theorem of this section. We will show that if $\gr{G}$ is an acyclic graph, and $\sgp{S}$ the free semigroup with relations in it, then $\sgp{S}$ has the IDP. We will use inductively the facts that $\mathbb{N}^2$ and the free unital semigroup on two generators have the IDP. The methods we will use in building the dilation are very similar to the ones used in the previous proposition, so we will omit some of the details.
		\begin{subsecTheorem}\label{mainthm4}
		Let $\gr{G}$ be an acyclic graph, and $\sgp{S}$ the free semigroup with relations in $\gr{G}$. Then $\sgp{S}$ has the IDP.
		\end{subsecTheorem}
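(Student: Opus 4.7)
The plan is to induct on $n := |V(\gr{G})|$. The base case $n=1$ reduces to the IDP of $\mathbb{N}$. For the inductive step, assume the conclusion for every acyclic graph on strictly fewer vertices. Since $\gr{G}$ is a forest, it admits a vertex $v$ that is either isolated or a leaf with a unique neighbor which I denote $w$. Let $\gr{G}^-$ be the induced subgraph on $V(\gr{G})\setminus\{v\}$; it has $n-1$ vertices and is again acyclic, so by the inductive hypothesis $\sgp{S}(\gr{G}^-)$ has the IDP. In addition, $\langle v\rangle \cong \mathbb{N}$ always has the IDP, and in the leaf case $\langle v,w\rangle \cong \mathbb{N}^2$ has the IDP.

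Given a representation $T$ of a product system $\sps{X}$ indexed by $\sgp{S}(\gr{G})$ on a Hilbert space $\hilb{H}_0$, I would mimic the alternating scheme of Proposition \ref{freeTwoProp}. Build inductively a sequence $V^0=T,V^1,V^2,\dots$ of representations of $\sps{X}$ on nested Hilbert spaces $\hilb{H}_0\subseteq\hilb{H}_1\subseteq\hilb{H}_2\subseteq\dots$, each $V^{n+1}$ a dilation of $V^n$, arranged so that $\restr{V^{2k+1}}{\sgp{S}(\gr{G}^-)}$ is isometric (by applying the inductive hypothesis at odd steps) and $\restr{V^{2k}}{\langle v\rangle}$ is isometric for $k\geq 1$ (by applying IDP of $\mathbb{N}$ at even steps). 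Then Corollary \ref{chainCor} produces a limit representation $V$ of $\sps{X}$ on $\hilb{H}:=\bigvee_n\hilb{H}_n$ which is a dilation of $T$. Applying the last part of Lemma \ref{chainLemma} to the odd subsequence shows that $(V_e,V_s)$ is isometric for each $s\in V(\gr{G}^-)$, and applied to the even subsequence it shows that $(V_e,V_v)$ is isometric. Since $V(\gr{G})\cup\{e\}$ generates $\sgp{S}(\gr{G})$, Lemma \ref{isometricLemma} then promotes this to isometry of $V$ as a whole.

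The main obstacle is executing each inductive step $V^n\to V^{n+1}$: extending the `dormant' generator's action to the enlarged Hilbert space while preserving the relation $vw=wv$ in the leaf case. Unlike in Proposition \ref{freeTwoProp}, where one trivially extends by zero, here naïve extension by zero typically fails to commute with the freshly dilated $w$-action and hence does not assemble into a genuine representation of $\sps{X}$. I propose to handle the odd step as follows (the even step is symmetric): first use the inductive hypothesis to obtain an isometric dilation $R^-$ of $\restr{V^{2k}}{\sgp{S}(\gr{G}^-)}$ on $\hilb{H}_{2k+1}$, and then invoke the IDP of $\mathbb{N}^2=\langle v,w\rangle$ in a commutant-lifting mode to co-extend $V^{2k}_v$ to an operator on $\hilb{H}_{2k+1}$ (enlarging further if necessary) that commutes with $R^-_w$ in the product system sense; this is possible because both the given $R^-_w$ and the $w$-coordinate of any $\mathbb{N}^2$-isometric-dilation of $(V^{2k}_v,V^{2k}_w)$ contain the unique minimal isometric dilation of $V^{2k}_w$ (Proposition \ref{minimalProp}) as a direct summand, and can be aligned accordingly. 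The action on longer words is then assembled by iterated application of Lemma \ref{damnLemma}, exactly as in equation (\ref{eq5}) of Proposition \ref{freeTwoProp}; one verifies by comparing on pure tensors and invoking Remark \ref{pure tensor remark} that the resulting maps form a c.c.\ representation of $\sps{X}$ and a dilation of $V^{2k}$. In the isolated-vertex case there is no commutation to preserve, so extension by zero suffices and the construction collapses to Proposition \ref{freeTwoProp} verbatim.
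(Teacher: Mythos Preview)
Your overall architecture --- induct on $|V(\gr{G})|$, remove a leaf $v$ with neighbor $w$, alternate dilations, and pass to a limit via Corollary \ref{chainCor} --- is exactly right, and matches the paper's. The gap is in the step you yourself flag as the main obstacle.

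You propose, at the odd step, to first isometrically dilate $\restr{V^{2k}}{\sgp{S}(\gr{G}^-)}$ to $R^-$ on $\hilb{H}_{2k+1}$, and then ``invoke the IDP of $\mathbb{N}^2$ in a commutant-lifting mode'' to produce a co-extension of $V^{2k}_v$ that commutes with $R^-_w$, arguing that both $R^-_w$ and the $w$-coordinate of any $\mathbb{N}^2$-dilation contain the minimal isometric dilation of $V^{2k}_w$ as a summand and can therefore be aligned. But alignment of the $w$-coordinate alone does not finish the job: once you enlarge the space to accommodate the $\mathbb{N}^2$-dilation, you must extend \emph{all} of $R^-$ (every generator of $\sgp{S}(\gr{G}^-)$, not just $w$) to that larger space while preserving their relations, and there is no reason the required extension is by zero or otherwise available. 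Attempting to fix this by applying the inductive hypothesis again to extend $R^-$ puts you back where you started, now needing to re-extend the $v$-action --- an infinite regress hidden inside what you present as a single step $V^{2k}\to V^{2k+1}$.

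The paper resolves this by abandoning the attempt to maintain a representation of the \emph{full} product system $\sps{X}$ at each finite stage. Instead it runs two interleaved sequences of representations of the \emph{restricted} systems $\restr{\sps{X}}{\sgp{S}_0}$ and $\restr{\sps{X}}{\sgp{S}_1}$ (where $\sgp{S}_0=\langle v,w\rangle$ and $\sgp{S}_1=\sgp{S}(\gr{G}^-)$), using the lifting lemma (Lemma \ref{liftingLemma}) at each step to force the $\langle w\rangle$-restrictions to dilate one another. Only \emph{after} taking the limit does one observe that the two limit representations agree on $\langle w\rangle$, merge them, and assemble a representation of all of $\sps{X}$ via Lemma \ref{damnLemma}; presentation-independence is then checked once, using that any two commuting letters lie together in $\sgp{S}_0$ or in $\sgp{S}_1$. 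This reorganization is precisely what makes the regress disappear: the ``enlarging further if necessary'' that you gesture at \emph{is} the alternation, not something to be absorbed into one step of it.
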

		\begin{proof}
		The proof is by induction on $N$, the number of vertices in $\gr{G}$. For $N=1$, $\sgp{S}=\mathbb{N}$, and this will be the basis of the induction. We note that for $N=2$, $\sgp{S}$ is either $\mathbb{N}^2$ or $\sgp{F}^+_2$. As stated in the beginning of the section, the cases of $\mathbb{N},\mathbb{N}^2$ are known, and the case of $\sgp{F}^+_2$ is exactly the content of Proposition \ref{freeTwoProp}.
		
		So, let $\gr{G}$ be an acyclic graph with $N$ vertices, and let $\sgp{S}$ be the free semigroup with relations in $\gr{G}$. Then we can always find in $\gr{G}$ a vertex which has degree less or equal to one: $\gr{G}$ is a forest, so we can choose a leaf of a tree, or, if $\gr{G}$ has no edges at all, choose a vertex with degree zero. Let us distinguish this vertex and call it $a$, and if it was a leaf, let $b$ be its only neighbor. Otherwise, let $b$ be some arbitrary different vertex. In this way, if we consider $\gr{G}_0$ to be the induced subgraph on the vertices $a,b$, the free semigroup with relations in $\gr{G}_0$ is either $\mathbb{N}^2$ or $\sgp{F}^+_2$, and at any rate has the IDP. Let us call it $\sgp{S}_0$, and we note that we can view $\sgp{S}_0$ as a subsemigroup of $\sgp{S}$. Now, let us denote by $\gr{G}_1$ the subgraph induced on $V(\gr{G})\setminus \set{a}$, and $\sgp{S}_1$ the free semigroup with relations in $\gr{G}_1$. Then by the induction hypothesis, $\sgp{S}_1$ has the IDP, and it too can be viewed as a subsemigroup of $\sgp{S}$.
		
		Let $\sps{X}$ be a product system of $\vn{M}$ correspondences, indexed by $\sgp{S}$, and $T$ a representation of $\sps{X}$ on a Hilbert space $\hilb{H}$. We wish to find an isometric dilation of $T$. We will build inductively two sequences of representations, $V^n$ of $\restr{T}{\sgp{S}_0}$ and $W^n$ of $\restr{T}{\sgp{S}_1}$. We begin by simply taking $V^0=\restr{T}{\sgp{S}_0}$, and taking $W^0$ to be some isometric dilation of $\restr{T}{\sgp{S}_1}$. By Lemma \ref{liftingLemma}, we can take $V^1$ to be an isometric dilation of $V^0$, such that $\restr{V^1}{\langle b\rangle}$ is a dilation of $\restr{W^0}{\langle b\rangle}$. In the same way, we can take $W^1$ to be an isometric dilation of $W^0$, such that $\restr{W^1}{\langle b\rangle}$ is a dilation of $\restr{V^1}{\langle b\rangle}$. We continue this process inductively, that is, given $V^n,W^n$, we take $V^{n+1}$ to be an isometric dilation of $V^n$, with $\restr{V^{n+1}}{\langle b\rangle}$ being a dilation of $\restr{W^n}{\langle b\rangle}$, and take $W^{n+1}$ to be an isometric dilation of $W^n$, with $\restr{W^{n+1}}{\langle b\rangle}$ being a dilation of $\restr{V^{n+1}}{\langle b\rangle}$. We get, by Corollary \ref{chainCor}, limit representations $V,W$ on a Hilbert space $\hilb{K}\supseteq \hilb{H}$. 
		
		We first note that $V,W$ are isometric: This is because they are limits of isometric representations, and thus follows from Corollary \ref{chainCor}. 
		
		Next, we note that $V$ and $W$ coincide on $\restr{\sps{X}}{\langle b \rangle}$: for $s\in\langle b \rangle$ and $x\in\sps{X}_s$, $V_s(x)$ is the SOT limit of $V^n_s(x)$, and $W_s(x)$ the SOT limit of $W^n_s(x)$. But for any $n$, $W^n_s(x)$ was, from our construction, a dilation of $V^n_s(x)$, and $V^{n+1}_s$ a dilation of $W^n_s(x)$. Thus, one can form the sequence $V^0_s(x),W^0_s(x),V^1_s(x),W^1_s(x),\dots$, which according to Lemma \ref{chainLemma} will converge in the SOT. But then both $V_s(x)$ and $W_s(x)$ are partial limits of this sequence, and in particular are equal.
		
		Seeing that $V$ and $W$ coincide on the intersection of their domains (namely, $\langle b \rangle$), it makes sense to drop the distinction between them, and just refer to both of them as $V$. So we want to extend $V$ to a representation of all of $\sps{X}$. Since $V$ is already defined on $\sps{X}_s$ for any $s$ in the alphabet of $\sgp{S}$ (as it is defined on all $s\in\sgp{S}_0\cup\sgp{S}_1$), there is really only one way to do this. Like we did in the proof of Proposition \ref{freeTwoProp}, for each $s\in\sgp{S}$, we pick a presentation $s=s_1\dots s_n$, and let $V_s$ be the map satisfying
		\[
		V_s(U^\sps{X}(x_1\otimes\dots\otimes x_n)) = V(x_1)\dots V(x_n),
		\]
		such that $(V_e,V_s)$ is a c.c. representation of $\sps{X}_s$. This maps exists due to inductive application of Lemma \ref{damnLemma}. Specifically we choose 
		\begin{equation} \label{tilde equation 2}
		\tild{V}_s = \tild{V}_{s_1}\circ (I\otimes \tild{V}_{s_2}) \circ\dots \circ (I\otimes \tild{V}_{s_n})\circ ((U^X_{s_1,\dots ,s_n})^*\otimes I_{\hilb{K}})
		\end{equation}
		and take the $V_s$ corresponding to $\tild{V}_s$.
		We note that this agrees with the previous definition of $V$ wherever it was already defined. Checking that this is a representation is again similar to what we did in Proposition \ref{freeTwoProp}, with one complication: to see that
		\begin{equation}\label{eq7}
		V_{st}(U^\sps{X}(x\otimes y)) = V_s(x)V_t(y)
		\end{equation}
		holds for all appropriate $x,y\in\sps{X}$, we need to work a little bit harder. However, if we do manage to show this, we are done, as $V$ is isometric. This is because it is isometric on $\sgp{S}_0\cup\sgp{S}_1$, which with $e$ generate all of $\sgp{S}$, thus we may apply Lemma \ref{isometricLemma}.
		
		Returning to (\ref{eq7}), we see that it will follow if we show that, when we defined $V_s$ by (\ref{tilde equation 2}) according to a presentation $s=s_1\dots s_n$, we could have chosen any other presentation and gotten the same map. That is, that the definition of $V_s$ was independent of the choice of presentation of $s$. Let us first see that this is indeed the case.
		
		We will start by showing this for two lettered words. Let $s$ be a word with presentations $s=s_1s_2 = s_2s_1$. Then either $s_1,s_2\in\sgp{S}_0$ or $s_1,s_2\in\sgp{S}_1$. That is because as vertices in $\gr{G}$, $s_1,s_2$ must be connected by and edge (or be the same vertex). If at least one of them is $a$, the other must be either $b$ or $a$, because we took $a$ to be either adjacent only to $b$, or with degree zero. So if one of them is $a$, they are both in $\gr{G}_0$, and if none of them is $a$, they are both in $\gr{G}_1$, and in any case, they are in the same sub semigroup ($\sgp{S}_0$ or $\sgp{S}_1$). But in that case, it is obvious that the presentations did not matter in the definition of $V_s$, as $\restr{V}{\sgp{S}_0}$ and $\restr{V}{\sgp{S}_1}$ were already representations, and our new definition of $V_s$ coincided with the old one. Explicitly, we can write $\tild{V}_{s_1}\circ(I_{\sps{X}_{s_1}}\otimes \tild{V}_{s_2})\circ((U^{\sps{X}}_{s_1,s_2})^*\otimes I_{\hilb{K}}) = \tild{V}_{s_2}\circ(I_{\sps{X}_{s_2}}\otimes \tild{V}_{s_1})\circ((U^{\sps{X}}_{s_2,s_1})^*\otimes I_{\hilb{K}})$, thus denoting $f=(U^{\sps{X}}_{s_2,s_1})^*U^{\sps{X}}_{s_1,s_2}$ the flip from $\sps{X}_{s_1}\otimes \sps{X}_{s_2}$ to $\sps{X}_{s_2}\otimes \sps{X}_{s_1}$, we have
		\begin{equation} \label{twolettereq}
		\tild{V}_{s_1}\circ(I_{\sps{X}_{s_1}}\otimes \tild{V}_{s_2}) = \tild{V}_{s_2}\circ(I_{\sps{X}_{s_2}}\otimes \tild{V}_{s_1})\circ(f\otimes I_{\hilb{K}})
		\end{equation}
		
		Now let $s$ be a general word. If we show that that given two presentations of $s$, which differ by just one application of a commutation relation, the definition of $V_s$ for them coincides, then the rest follows easily by induction. So this is what we will show. Without loss of generality, we assume that they differ on the first two letters, that is, the two presentations are $s=s_1s_2\dots s_n = s_2s_1\dots s_n$ (so $s_1s_2 = s_2s_1$ is the commutation relation applied). To avoid confusion, we denote by $V_s$ the maps defined by the presentation $s=s_1s_2\dots s_n$, and by $V'_s$ the map define by the presentation $s=s_2s_1\dots s_n$. So
		\[
		\tild{V'}_s = \tild{V}_{s_2}\circ(I\otimes\tild{V}_{s_1})\circ(I\otimes\tild{V}_{s_3})\circ\dots\circ (I\otimes\tild{V}_{s_n})\circ ((U^{\sps{X}}_{s_2,s_1,s_3\dots ,s_n})^*\otimes I_{\hilb{K}})
		\]
		and
		\begin{align*}
		\tild{V}_s&=\tild{V}_{s_1}\circ(I\otimes\tild{V}_{s_2})\circ(I\otimes\tild{V}_{s_3})\circ\dots\circ (I\otimes\tild{V}_{s_n})\circ ((U^{\sps{X}}_{s_1,\dots ,s_n})^*\otimes I_{\hilb{K}}) = \\
		&=\tild{V}_{s_2}\circ(I\otimes\tild{V}_{s_1})\circ(f\otimes I_{\hilb{K}})\circ(I\otimes\tild{V}_{s_3})\circ\dots\circ (I\otimes\tild{V}_{s_n})\circ ((U^{\sps{X}}_{s_1,\dots ,s_n})^*\otimes I_{\hilb{K}}).
		\end{align*}
		But it is easy to see that in the last equation, the expression $f\otimes I_{\hilb{K}}$ can move to the right, and since $f\circ (U^{\sps{X}}_{s_1,\dots ,s_n})^* = (U^{\sps{X}}_{s_2,s_1,s_3\dots ,s_n})^*$ we will get that $\tild{V}_s=\tild{V'}_s$. More rigorously, one can see on pure tensors, $(f\otimes I_{\hilb{K}})\circ(I\otimes\tild{V}_{s_3})\circ\dots\circ (I\otimes\tild{V}_{s_n})\circ ((U^{\sps{X}}_{s_1,\dots ,s_n})^*\otimes I_{\hilb{K}})$ acts the same as $(I\otimes\tild{V}_{s_3})\circ\dots\circ (I\otimes\tild{V}_{s_n})\circ (f\circ(U^{\sps{X}}_{s_1,\dots ,s_n})^*\otimes I_{\hilb{K}})$, and from there proceed as in the proof of Proposition \ref{freeTwoProp}.
		
		We have thus seen that $V_s$ is defined the same way for all presentations of $s$. We still need to explain why this is enough to show that (\ref{eq7}) holds. Let $s=s_1\dots s_n$, $t=t_1\dots t_m$ be presentations of $s,t$. Then $st = s_1\dots s_nt=t_1\dots t_m$ is a presentation of $st$. Thus for pure tensors we would have
		
		\begin{align*}
		&V_{st}(U^\sps{X}(U^\sps{X}(x_1\otimes\dots\otimes x_n)\otimes U^\sps{X}(y_1\otimes\dots\otimes y_m))) \\
		&= V_{st}(U^\sps{X}(x_1\otimes\dots\otimes x_n\otimes y_1\otimes\dots\otimes y_m))\\
		&= V(x_1)\dots V(x_n) V(y_1)\dots V(y_m).
		\end{align*}
		On the other hand, it is also clear that
		\begin{align*}
		&V_s(U^\sps{X}(x_1\otimes\dots\otimes x_n))V_t(U^\sps{X}(y_1\otimes\dots\otimes y_m))\\ 
		&=V(x_1)\dots V(x_n) V(y_1)\dots V(y_m),
		\end{align*}
		so we would have (\ref{eq7}) for pure tensors $x,y$ (and thus for $x,y$ in the linear span of the pure tensors). It now follows as in the proof of Proposition \ref{freeTwoProp} that\comm{
			We now proceed in a manner similar to what we did in the last proposition. We consider the maps \[\tild{V}_{st}\circ U^{\sps{X}}\] and \[\tild{V}_{s}\circ(I\otimes \tild{V}_{t})\circ ((U^{\sps{X}}_{s_1,\dots,s_n})^*\otimes (U^{\sps{X}}_{t_1,\dots,t_m})^*\otimes I_{\hilb{K}}).\] By the bijection in Lemma \ref{bijection lemma}, they are given by $\tild{A}$ and $\tild{B}$ for some representation $A,B$ of $X_{s_1}\otimes\dots\otimes X_{s_n}\otimes X_{t_1}\otimes\dots\otimes X_{t_m}$. The calculation above implies that $A$ and $B$ coincide on pure tensors, therefore by Remark \ref{pure tensor remark}, $A=B$. So we can conclude that 
		}
		(\ref{eq7}) holds for all $x,y$.

		To conclude, we see that $V$ is a well defined isometric representation of $\sps{X}$, and the only question that may remain is whether this is a dilation of $T$ or not. The answer is, of course, in the affirmative: we have that $\tild{V}_s$ is a co-extension of $\tild{T}_s$, whenever $s$ is in the alphabet (or is $e$), by virtue of $\restr{V}{\sgp{S}_0}$ and $\restr{V}{\sgp{S}_1}$ being dilations of $\restr{T}{\sgp{S}_0}$ and $\restr{T}{\sgp{S}_1}$. Thus we can use the same argument as in Proposition \ref{freeTwoProp} to see that for any $s\in\sgp{S}$, $\tild{V}_s$ is a co-extension of $\tild{T}_s$ (albeit writing down explicitly what $\tild{V}_s$ is in terms of the presentation of $s$ might be a little more cumbersome here, which is the main reason we omit it). This completes the proof.
		\end{proof}
		~
	
\section{Dilations of CP-maps commuting according a graph}~	\label{cp maps section}

	We wish to combine our results in the previous chapters in the form of a theorem on simultaneous dilations of CP-maps, commuting according to an acyclic graph.
	\begin{secTheorem}
	Let $\gr{G}$ be an acyclic graph, and $\sgp{S}$ the free semigroup with relations in $\gr{G}$. Let $\varphi = \setind{\varphi}{s}{S}$ be a CP-semigroup acting on $\vn{M}\subseteq \bdd{H}$. That is, for every element of the alphabet $a\in\alp{A}$, $\varphi_a$ is a CP-map, and $\setind{\varphi}{a}{\alp{A}}$ commute according to $\gr{G}$. Then $\varphi$ has an E-dilation.
	\end{secTheorem}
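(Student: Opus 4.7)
The plan is to assemble the machinery developed in Sections \ref{embedding section} and \ref{dilation section} exactly as outlined in the overview. First I would pass from the CP-semigroup $\varphi$ to its Arveson--Stinespring subproduct system and identity representation via the functor $\Xi$, writing $\Xi(\varphi) = (\sps{X}, T)$, where $\sps{X}$ is a subproduct system of $W^*$-correspondences over $\vn{M}'$ indexed by $\sgp{S}(\gr{G})$, and $T$ is a representation on $\hilb{H}$ satisfying $\varphi_s(a) = \tilde{T}_s(I_{\sps{X}_s}\otimes a)\tilde{T}_s^*$ for all $a\in\vn{M}$.

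Next, since $\gr{G}$ is acyclic it is in particular without $3$-cycles, so Theorem \ref{mainthm3} provides a product system $\sps{Y}$ over $\vn{M}'$, indexed by $\sgp{S}(\gr{G})$, together with a morphism $V:\sps{Y}\to\sps{X}$. The proposition preceding Theorem \ref{mainthm3} then gives that $T\circ V := \{T_s\circ V_s\}_{s\in\sgp{S}}$ is a representation of $\sps{Y}$ on $\hilb{H}$ which is a dilation of $(\sps{X},T,\hilb{H})$. Now I apply Theorem \ref{mainthm4}: since $\gr{G}$ is acyclic, $\sgp{S}(\gr{G})$ has the IDP, so the representation $T\circ V$ of the product system $\sps{Y}$ admits an isometric dilation $R$ on some Hilbert space $\hilb{K}\supseteq\hilb{H}$. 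Transitivity of the dilation relation (a direct check from the definition) shows that $(\sps{Y},R,\hilb{K})$ is an isometric dilation of $(\sps{X},T,\hilb{H})$.

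Finally I invoke Proposition \ref{repDilCpDil}: the CP-semigroup $\theta_s(a) := \tilde{R}_s(I_{\sps{Y}_s}\otimes a)\tilde{R}_s^*$ is a dilation of $\varphi$. Because $R$ is isometric and $R_e$ is injective (as $T_e$ was the identity representation and this survives dilation, per the discussion at the end of Section 2.5), the Remark following $\Xi$ guarantees that $\theta$ consists of $*$-endomorphisms, i.e.\ it is an E-semigroup. Thus $\theta$ is an E-dilation of $\varphi$ in the sense of the definition in Section 2.3. Concretely, one lets $\vn{R} := R_e(\vn{M}')'$ acting on $\hilb{K}$, with the projection $p = P_{\hilb{H}}$; the dilation relation $\varphi_s(pap) = p\theta_s(a)p$ is exactly what Proposition \ref{repDilCpDil} delivers.

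The main conceptual obstacle has already been absorbed into Theorems \ref{mainthm3} and \ref{mainthm4}: embedding into a product system (which required the combinatorial lemma on flips) and establishing the IDP inductively from the $\mathbb{N}$, $\mathbb{N}^2$, and $\sgp{F}_2^+$ cases. The only point here that needs a bit of care is verifying that ``dilation of a dilation is a dilation'' in the subproduct system sense and that the minor bookkeeping about von Neumann algebras (passing to $\vn{M}'$ in the Arveson--Stinespring construction and then recovering $\vn{M}$ when reading off the E-dilation) goes through smoothly; both are routine given the setup already recorded in Section 2.
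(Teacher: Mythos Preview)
Your proposal is correct and follows essentially the same route as the paper: pass to the Arveson--Stinespring subproduct system $(\sps{X},T)$, embed $\sps{X}$ in a product system $\sps{Y}$ via Theorem \ref{mainthm3}, note that $T\circ V$ is a dilation of $T$, apply the IDP for $\sgp{S}(\gr{G})$ from Theorem \ref{mainthm4} to obtain an isometric dilation $R$, and then read off the E-dilation via Proposition \ref{repDilCpDil}. The only minor remark is that the injectivity of $R_e$ is not needed for concluding that $\theta$ is an E-semigroup---Remark \ref{remark1} gives this directly from $R$ being isometric---so that step can be shortened.
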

	\begin{proof}
	We have actually already outlined the proof near the end of the preliminaries. We define $(\sps{X},T)$ to be the Arveson-Stinespring subproduct system (and $T$ the identity representation representation) associated to $\varphi$. So $\sps{X}$ is a subproduct system of $\vn{M}'$-correspondences, and $T$ acts on $\bdd{\hilb{H}}$. Then $\varphi$ is given by 
	\[
	\varphi_s(a) = \tild{T_s}\circ (I_{\sps{X}_s}\otimes a)\circ \tild{T_s}^*.
	\]
	In \cite{Shalit2009} (Proposition 5.8), Shalit and Solel show that given a dilation $(Y,R,K)$ of $(X,T,H)$, the CP-semigroup $\alpha$ given by 
	\[
	\alpha_s(a)= \tild{R}_s\circ (I_{\sps{Y}_s}\otimes a)\circ \tild{R_s}^*,
	\]
	acting on $R_e(\vn{M}')'\subseteq \bdd{\hilb{K}}$, will be a dilation of $\varphi$. If $R$ is isometric, then $\alpha_s$ will be a $*$-endomorphism for all $s$, thus an $\alpha$ will be an E-dilation of $\varphi$. So we see that it suffices to show that $(X,T,H)$ has an isometric dilation.
	
	We have shown in Theorem \ref{mainthm3} that $X$ embeds in a product system, so we may assume $\sps{X}$ is a subsystem of a product system $\sps{Y}$, where $V:\sps{Y}\rightarrow\sps{X}$ is the morphism showing this, given simply by the projections of $\sps{Y}_s$ onto $\sps{X}_s$ for all $s\in\sgp{S}$. We note that $S:=T\circ V$ is a representation of $\sps{Y}$, and furthermore, $(\sps{Y},S,\hilb{H})$ is a dilation of $(X,T,\hilb{H})$ - this is an easy consequence of the definitions of morphisms and dilations. We now note that by Theorem \ref{mainthm4}, $\sgp{S}$ has the isometric dilation property, thus there is an isometric dilation $(\sps{Y},R,\hilb{K})$ of $(\sps{Y},S,\hilb{H})$, therefore it is an isometric dilation of $(\sps{X},T,\hilb{H})$, and we are done.
	\end{proof}
	
	In \cite[Theorem 5.14]{Shalit2009} Shalit and Solel show an example of three pairwise commuting CP-maps, with no simultaneous pairwise commuting E-dilations. That is, a CP-semigroup $\varphi = \set{\varphi_n}_{n\in\mathbb{N}^3}$ which has no E-dilation. Its existence essentially follows from Parrott's example of three commuting contractions, which have no isometric (commuting) dilation. 
	
	We wish to show that such an example also exists when we consider tuples of CP-maps commuting according to a graph $\gr{G}$ with cycles of any length. We will follow the proof in \cite{Shalit2009}, but replace the role of Parrott's example with a tuple of contractions commuting according to $\gr{G}$, which does not have an isometric co-extension, commuting according to $\gr{G}$. 
	
	In \cite{Opvela2006} (Theorem 2.3), Opela gives an example which almost works for our purposes - his example is of a tuple of contractions which does not have a unitary dilation, instead of not having an isometric co-extension, as we need. To overcome this technicality, we prove the next lemma and get the example we need as a corollary. The lemma is very similar to Lemma \ref{chainLemma}.
	\begin{secLemma}\label{chainLemma2}
	Let $\setind{T}{n}{\mathbb{N}}$ be operators on Hilbert spaces $\hilb{H_0}\subseteq\hilb{H_1}\subseteq\dots$, respectively, which satisfy $\norm{T_n}\leq c$ for some constant $c$, for all $n\in\mathbb{N}$. Assume further that $T_{n+1}$ is a dilation of $T_n$ for all $n\in\mathbb{N}$. Let $\hilb{H}:=\bigvee_{n=0}^{\infty}\hilb{H_n}$. Then the sequence $\setind{T_nP}{n}{\mathbb{N}}$ converges in the SOT to an operator $T\in\bdd{H}$, where $P_n$ is the orthogonal projection on $\hilb{H_n}$. Furthermore, $T$ is a dilation of $T_n$, for all $n\in\mathbb{N}$, and $\norm{T}\leq c$. If $T_n$ is an isometry for all $n$, then so is $T$.
	\end{secLemma}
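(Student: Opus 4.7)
The plan is to follow the pattern of Lemma \ref{chainLemma}, building $T$ pointwise on the dense subspace $\bigcup_m \hilb{H_m}$ and extending by continuity, but this time one cannot pass through the adjoints as before (a mere dilation of $T_n$ need not give an extension of $T_n^*$). Instead, one extracts what is needed from the first-power piece of the hypothesis: for $N'\geq N\geq m$ and $h\in\hilb{H_m}$ one has $P_N\restr{T_{N'}}{\hilb{H_N}}=T_N$, hence $P_N T_{N'}h = T_N h$ and so $T_{N'}h - T_N h \perp \hilb{H_N}$. By Pythagoras,
\[
\|T_{N'}h\|^2 = \|T_N h\|^2 + \|T_{N'}h - T_N h\|^2.
\]
Thus $\{\|T_N h\|\}_{N\geq m}$ is nondecreasing and bounded by $c\|h\|$, so it converges, which forces $T_N h$ to be Cauchy. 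Define $Th:=\lim_N T_N P_N h$ for $h\in\bigcup_m\hilb{H_m}$.

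This definition is independent of the choice of $m$, is linear, and satisfies $\|Th\|\leq c\|h\|$. Combined with the uniform bound $\|T_N P_N\|\leq c$, a standard $\varepsilon/3$ argument extends $T$ to an operator in $\bdd{H}$ with $\|T\|\leq c$ and yields $T_N P_N\to T$ in the SOT. For the dilation property, note that since $T_N$ sends $\hilb{H_N}$ into itself, $(T_N P_N)^k = T_N^k P_N$ for every $k$; because composition is jointly SOT-continuous on uniformly bounded sets, $T_N^k P_N\to T^k$ in the SOT. Now the full dilation hypothesis gives $P_n T_N^k h = T_n^k h$ for $h\in\hilb{H_n}$ and $N\geq n$, and passing to the SOT limit gives $P_n\restr{T^k}{\hilb{H_n}} = T_n^k$, so $T$ dilates each $T_n$. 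Finally, if each $T_n$ is an isometry, then for $h\in\hilb{H_m}$ and $N\geq m$ we have $\|T_N P_N h\|=\|T_N h\|=\|h\|$, so $\|Th\|=\|h\|$ on the dense subspace and hence everywhere.

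The main obstacle, by contrast with Lemma \ref{chainLemma}, is that the dilation relation $P_N\restr{T_{N'}}{\hilb{H_N}}=T_N$ controls only the compression and does not pin down the off-diagonal entries of $T_{N'}$, so one cannot simply extend a common adjoint. The key observation unlocking the argument is that the orthogonality $T_{N'}h - T_N h\perp\hilb{H_N}$, together with the Pythagorean identity, converts the first-power dilation hypothesis into monotone bounded growth of $\|T_N h\|^2$, which delivers Cauchyness; once $T$ is constructed, multiplicativity of SOT limits on bounded sets propagates the higher-power dilation conditions automatically.
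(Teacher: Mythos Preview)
Your proof is correct and follows essentially the same approach as the paper. The only cosmetic difference is in the Cauchyness step: the paper telescopes $T_m x - T_n x = \sum_{k=n}^{m-1} Q_k T_{k+1}x$ into orthogonal increments and argues that the series $\sum_k \|Q_k T_{k+1}x\|^2$ is bounded, whereas you use the equivalent observation that $\|T_N h\|^2$ is nondecreasing and bounded (which is the same thing, since $\|T_{N'}h\|^2-\|T_N h\|^2=\|T_{N'}h-T_N h\|^2$); the remaining steps (extension by $\varepsilon/3$, $(T_N P_N)^k=T_N^k P_N$ and SOT continuity on bounded sets, isometry on a dense set) are identical to the paper's.
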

	\begin{proof}
	We begin by showing that for any $x\in\bigcup_{n=0}^{\infty}\hilb{H_n}$, the sequence $T_nP_nx$ converges. We restrict ourselves to values of $n$ large enough such that $x\in\hilb{H_n}$. Thus $T_nP_nx = T_nx$. We notice that
	\[
	T_{n+1}x = P_nT_{n+1}x + Q_nT_{n+1}x,
	\]
	where $Q_n$ is the orthogonal projection on $\hilb{H_{n+1}}\ominus\hilb{H_n}$. But $T_{n+1}$ is a dilation of $T_n$, so we have $P_nT_{n+1}x=T_nx$. We conclude that for large enough values of $n\leq m$, we have $(T_m-T_n)x = \sum_{k=n}^{m-1}Q_kT_{k+1}x$, thus:
	\[
	\norm{(T_m-T_n)x}^2 = \sum_{k=n}^{m-1}\norm{Q_kT_{k+1}x}^2.
	\]
	But the LHS is bounded by $2c^2\norm{x}^2$, so we can conclude that $\sum_{k=n}^{\infty}\norm{Q_kT_{k+1}x}^2$ converges, and in particular $\sum_{k=1}^{\infty}\norm{Q_kT_{k+1}x}^2$ converges. 
	This implies that $\norm{(T_m-T_n)x}^2$ are partial sums of a convergent positive series, therefore $T_nx$ is Cauchy, and has a limit in $\hilb{H}$. We define $Tx$ to be this limit.
	$T$ is obviously bounded, as $\norm{Tx} = \lim_n\norm{T_nP_nx}\leq c\norm{x}$, therefore it extends uniquely to an element of $\bdd{H}$, with norm less or equal to $c$. We want to show that $T$ is the SOT limit of $T_nP_n$, that is, that $Tx$ is the limit of $T_nP_nx$ for any $x\in\hilb{H}$. This fairly straightforward: we start by picking $x_0\in\bigcup_{n=0}^{\infty}$ such that $\norm{x-x_0}<\frac{\epsilon}{3c}$, and take $N$ large enough so for any $n>N$, $\norm{T_nP_nx_0-Tx_0}<\frac{\epsilon}{3}$. Then:
	\begin{align*}
	\norm{T_nP_nx_0-Tx}&\leq \norm{T_nP_nx-T_nP_nx_0}+\norm{T_nP_nx_0-Tx_0}+\norm{Tx_0-Tx}\\
	&\leq c\norm{x-x_0}+\frac{\epsilon}{3}+c\norm{x-x_0}<\epsilon.
	\end{align*}
	
	We must show that $T$ is a dilation of $T_n$, for all $n$. Specifically:
	\begin{equation}
	\begin{split}
	&P_n\restr{T^k}{\hilb{H_n}} = T^k_n
	\end{split}
	\end{equation}
	for all $k$.
	This is immediate: For any $k$, $T^k$ is the SOT limit of $(T_nP_n)^k$ (the sequences here are bounded). It is also clear that $(T_nP_n)^k = T_n^kP_n$. Let $h\in\hilb{H_n}$.
	For any $m>n$ we will have $P_mh = h$, thus 
	\[
	P_nT^nh = P_n\lim_{m\rightarrow\infty}T^k_mP_mh =  \lim_{m\rightarrow\infty}P_nT^k_mh.
	\]
	But again, if $m>n$ then the $P_nT^k_mh = T^k_nh$, since $T_m$ is a dilation of $T_n$, and thus we get $P_nT^nh = T^k_nh$, and we are done.
	
	Lastly, if $T_n$ is an isometry for all $n$, it will follow that $T$ is an isometry. Pick $h\in \hilb{H}_n$ for some $n$, then $T_mP_mh\rightarrow Th$, and thus $\norm{T_mP_mh}\rightarrow \norm{Th}$. But for $m>n$, we have that $\norm{T_mP_mh} = \norm{T_mh} = \norm{h}$, so we see that $T$ is isometric on a dense subspace of $\hilb{H}$, and is thus an isometry.
	\end{proof}
	\begin{secCor}\label{example}
	Let $\gr{G}$ be a graph containing a cycle, with $\sgp{S}$ the free semigroup with relations in $\gr{G}$. Then there is a semigroup of contractions $\setind{T}{s}{\sgp{S}}$ on a Hilbert space $\hilb{H}$, such that there is no semigroup $\setind{V}{s}{\sgp{S}}$ of isometries on a Hilbert space $\hilb{K}$, where $V_s$ is a co-extension of $T_s$ for all $s$.
	\end{secCor}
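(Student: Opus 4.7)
The plan is to derive the example from Opela's theorem using the chain-limit Lemma \ref{chainLemma2}. Since $\gr{G}$ has a cycle, Opela's theorem gives a tuple $\mathbf{T} = (T_v)_{v \in V(\gr{G})}$ of contractions on some $\hilb{H}$, commuting according to $\gr{G}$, with no unitary dilation commuting according to $\gr{G}$. Setting $T_s := T_{v_1} \dotsb T_{v_k}$ for any presentation $s = v_1 \dotsb v_k \in \sgp{S}$ produces a well-defined semigroup $(T_s)_{s\in\sgp{S}}$ of contractions on $\hilb{H}$. Suppose toward contradiction that $(V_s)_{s \in \sgp{S}}$ is an isometric semigroup on some $\hilb{K} \supseteq \hilb{H}$ with each $V_s$ a co-extension of $T_s$. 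Then $(V_v)_{v \in V(\gr{G})}$ is a tuple of isometries commuting according to $\gr{G}$ (from the semigroup relations $V_u V_v = V_{uv} = V_{vu} = V_v V_u$ when $\{u,v\} \in E(\gr{G})$), each $V_v$ a co-extension of $T_v$; the strategy is to extend this tuple to commuting-according-to-$\gr{G}$ unitaries on a larger space, which would then be a unitary dilation of $\mathbf{T}$ commuting according to $\gr{G}$, contradicting Opela.

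The unitarization is built iteratively, along the lines of the classical minimal unitary extension of a single isometry, with Lemma \ref{chainLemma2} supplying the limit. Fix an enumeration $\{(v_n, x_n)\}_{n \in \mathbb{N}}$ with every vertex $v \in V(\gr{G})$ appearing infinitely often and with the $x_n$'s running through a countable dense subset of the eventual space. Starting from $\hilb{K}_0 := \hilb{K}$ and $V^{(0)}_v := V_v$, at the $n$-th step adjoin finitely or countably many new vectors to $\hilb{K}_n$ to obtain $\hilb{K}_{n+1} \supseteq \hilb{K}_n$ and isometries $V^{(n+1)}_v$ on $\hilb{K}_{n+1}$, commuting according to $\gr{G}$, co-extending $V^{(n)}_v$, and such that $P_{\hilb{K}_n} x_n$ lies in the range of $V^{(n+1)}_{v_n}$. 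Applying Lemma \ref{chainLemma2} componentwise yields isometries $U_v$ on $\hilb{K}_\infty := \bigvee_n \hilb{K}_n$, each $U_v$ a dilation of $V_v$ and hence of $T_v$; the commutation relations $V^{(n)}_u V^{(n)}_v = V^{(n)}_v V^{(n)}_u$ for $\{u,v\} \in E(\gr{G})$ pass to the SOT limit because the sequences are uniformly bounded, so $(U_v)$ commutes according to $\gr{G}$; and the density of the $x_n$'s ensures each $U_v$ has dense (hence full) range, so each $U_v$ is unitary. This gives the promised unitary dilation and completes the contradiction.

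The main obstacle is the inductive step: extending each $V^{(n)}_w$ to the newly adjoined subspace so that \emph{every} edge relation of $\gr{G}$—not only those involving the distinguished vertex $v_n$—continues to hold. Imposing $V_w V_{v_n} = V_{v_n} V_w$ for neighbors $w$ of $v_n$ already forces the $V^{(n+1)}_w$-image of each new vector via the decomposition $\hilb{K}_n = V^{(n)}_{v_n}\hilb{K}_n \oplus (\hilb{K}_n \ominus V^{(n)}_{v_n}\hilb{K}_n)$, reinterpreting the deficiency component inside the new copy; further constraints (e.g.\ two neighbors of $v_n$ being adjacent, or an edge $\{u,w\}$ not involving $v_n$) may cascade and require yet more auxiliary vectors. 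Carrying out this bookkeeping so that only countably many vectors are adjoined at each stage while the extended $V^{(n+1)}$ remains an isometric tuple commuting according to $\gr{G}$ is the technical core of the argument, parallel in spirit to (though somewhat more delicate than) the inductive constructions used in Section \ref{dilation section}.
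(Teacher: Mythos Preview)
Your overall strategy begins correctly---start from Opela's tuple, and argue that an isometric co-extension would yield a unitary dilation commuting according to $G$---but the central step you identify as ``the technical core of the argument'' is genuinely missing, and it is not clear it can be carried out as you describe. You need to show that a tuple of isometries commuting according to $G$ can always be extended to a tuple of unitaries commuting according to $G$. Your sketch (adjoin vectors to enlarge the range of one $V_{v_n}$, then extend all other $V_w$ to the new vectors so that \emph{every} edge relation of $G$ is preserved) runs into exactly the difficulty you flag: edges not involving $v_n$, and cycles through $v_n$, impose cascading constraints on the extensions of the $V_w$'s, and you give no mechanism to satisfy them simultaneously. For fully commuting tuples this is classical (an inductive-limit argument over $\mathbb{N}^k$), but for partial commutation governed by a graph with cycles there is no such direct construction available, and nothing in the paper provides one. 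As written, this is a gap, not a routine verification.

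The paper avoids this problem entirely by a different, cleaner bootstrap. Instead of assuming only that \emph{this one} semigroup has an isometric co-extension, it assumes for contradiction that the corollary fails, i.e.\ that \emph{every} $\sgp{S}$-semigroup of contractions admits an isometric co-extension. Applying this hypothesis alternately to the semigroup and to the semigroup generated by the adjoints of the generators produces a chain
\[
R_0 \;\prec\; R_1 \;\prec\; R_2 \;\prec\; \cdots
\]
where the odd steps are isometric co-extensions and the even steps are co-isometric extensions. Lemma~\ref{chainLemma2} gives a limit semigroup that is simultaneously isometric and co-isometric, hence unitary, dilating $R_0$ and commuting according to $G$; this contradicts Opela. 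The point is that the paper never needs to unitarize a given isometric tuple by hand---it gets surjectivity for free from the co-isometric steps, which come from a second application of the (assumed false) corollary to the adjoints. Your argument, by contrast, uses the contradiction hypothesis only once and must then supply the unitarization directly, which is precisely the hard part you left open.
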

	\begin{proof}
	The example Opela gives in \cite{Opvela2006} yields a semigroup $R_0 =\set{T_{0,s}}_{s\in\sgp{S}}$, such that there is no semigroup of unitaries dilating it. Suppose for contradiction that the corollary is false. Then we may construct a semigroup of isometries $R_1 =\set{T_{1,s}}_{s\in\sgp{S}}$, co-extending $R_0$. But if the corollary is false, it is also true we may construct a semigroup $R_2 =\set{T_{2,s}}_{s\in\sgp{S}}$ of co-isometries, extending $R_1$. That is, $T_{2,s}$ is a co-isometry for all $s$, and is an extension of $T_{1,s}$. Truly, if this was not the case, then $\set{T_{1,a}^*}_{a\in\alp{A}}$ would generate a semigroup which would be the example we are looking for. We may thus continue this process inductively, taking isometric co-extensions in the odd steps, and co-isometric extensions in the even steps.
	
	This process yields, for all $s$, a sequence $T_{n,s}$ of dilations, which by Lemma \ref{chainLemma2} admits a limit operator $T_s$. It is easy to see that $R =\set{T_s}_{s\in\sgp{S}}$ will be a semigroup. It is also clear, that since there is a subsequence of isometries in $T_{n,s}$, $T_s$ must be an isometry. But similarly, $T^*_{n,s}$ is a sequence of dilations (with a subsequence of isometries), which will admit an isometric limit $K_s$. Verifying that $K_s = T^*_s$ is easy: for any $n$ we have that $P_nT_sP_n = P_nT_{n,s}P_n$ (since $T_s$ dilates $T_{n,s}$), and thus $P_nT^*_sP_n = P_nT^*_{n,s}P_n$. But we also have $P_nK_sP_n = P_nT^*_{n,s}P_n$, and thus 
	\[
	P_nK_sP_n = P_nT^*_sP_n.
	\]
	Taking SOT limits on both sides as $n$ tends to infinity, we get the equality we wanted.
	
	To conclude, we see that both $T_s$ and $T^*_s$ are isometries, and thus the semigroup $R$ is a semigroup of unitaries dilating $R_0$, which is a contradiction, and we are done.
	\end{proof}
	
	With this technicality taken care of, we may proceed to show the result we were seeking:.
	
	\begin{secTheorem}
	Let $\gr{G}$ be a graph containing a cycle, and $\sgp{S}$ the free semigroup with relations in it. Then there is a CP-semigroup $\varphi = \setind{\varphi}{s}{S}$ which does not have an E-dilation.
	\end{secTheorem}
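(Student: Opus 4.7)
The plan is to follow the strategy of \cite[Theorem 5.14]{Shalit2009}, replacing Parrott's example with the semigroup of contractions supplied by Corollary \ref{example}. The overall idea is to encode the obstruction to isometric co-extension into a product-system representation, so that an E-dilation of the associated CP-semigroup would force the existence of a forbidden isometric co-extending semigroup.

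First, by Corollary \ref{example}, fix a semigroup of contractions $\{T_s\}_{s\in\sgp{S}}$ on a Hilbert space $\hilb{H}_0$, commuting according to $\gr{G}$, for which no isometric co-extending semigroup commuting according to $\gr{G}$ exists. Let $\sps{X}$ be the trivial product system indexed by $\sgp{S}$ over $\mathbb{C}$ with $\sps{X}_s = \mathbb{C}$ for every $s$, all product maps being the canonical identifications $\mathbb{C}\otimes\mathbb{C}\cong\mathbb{C}$; this is manifestly a product system. Define a representation $T$ of $\sps{X}$ on $\hilb{H}_0$ by $T_e(\lambda) = \lambda I_{\hilb{H}_0}$ and $T_s(\lambda) := \lambda T_s$ for each $s\in\sgp{S}$; the semigroup law among the $T_s$ (which encodes the commutation relations of $\gr{G}$) ensures this is a genuine representation. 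To place ourselves in the setting of Remark \ref{remark1}, I would if necessary replace $T$ by its direct sum with the canonical injective Fock representation of $\sps{X}$, an enlargement which preserves the property that any isometric dilation, when restricted to the component containing $\hilb{H}_0$, must produce isometric co-extensions of the original $T_s$.

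Now set $\varphi := \Sigma(\sps{X}, T)$, given by $\varphi_s(a) = \tild{T}_s(I_{\sps{X}_s}\otimes a)\tild{T}_s^*$. Suppose for contradiction that $\varphi$ admits an E-dilation $(p, \vn{R}, \alpha)$. Applying $\Xi$ to the E-semigroup $\alpha$ produces a product system $\sps{Z}$ together with an isometric representation $S$, and by Remark \ref{remark1} the injectivity of $T_s$ lets us identify $\sps{X}$ as a subsystem of $\sps{Z}$ in such a way that $S$ is an isometric dilation of $T$ in the paper's sense. Setting $V_s := S_s(1)$ for every $s\in\sgp{S}$ then yields a semigroup of isometries: each $V_s$ is an isometry because $S$ is isometric and $\langle 1,1\rangle = 1$; the family $\{V_s\}_{s\in\sgp{S}}$ commutes according to $\gr{G}$ because $S$ is a representation of $\sps{Z}$ indexed by $\sgp{S}$; and Lemma \ref{coexLemma} guarantees each $V_s$ is a co-extension of $T_s$. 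This contradicts the defining property of $\{T_s\}$ from Corollary \ref{example}.

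The main obstacle is the precise invocation of Remark \ref{remark1}, which demands injectivity of $T_s$ for every $s$ (hence the Fock-space amendment) and the honest identification of the abstract product system $\sps{Z}$ arising from the E-semigroup $\alpha$ with one genuinely containing $\sps{X}$ as a subsystem in the sense of the paper's definition of dilation. Once this identification is in place, the contradictory semigroup of isometric co-extensions pops out by evaluating $S$ at $1 \in \sps{X}_s = \mathbb{C}$, essentially tautologically.
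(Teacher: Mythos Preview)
Your proposal is correct and follows essentially the same approach as the paper's proof. Two minor points of comparison: (i) for injectivity of $T_s$, the paper uses the simpler fix of replacing each $t_s$ by $t_s\oplus I_{\hilb{K}}$ rather than adjoining a Fock summand; (ii) the step you flag as ``the main obstacle''---passing from an E-dilation $\alpha$ of $\varphi$ to an isometric dilation of $(\sps{X},T,\hilb{H})$---is not contained in Remark~\ref{remark1} alone but is exactly \cite[Theorem~5.12]{Shalit2009}, which the paper invokes explicitly (Remark~\ref{remark1} gives the identification $\Xi(\varphi)\cong(\sps{X},T)$ via injectivity, and Theorem~5.12 supplies the dilation compatibility). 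Once that citation is in place your argument is complete and matches the paper's.
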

	\begin{proof}
	We construct a product system of Hilbert spaces by taking $\sps{X}_s = \mathbb{C}$ for all $s\in \sgp{S}$, with the connecting maps given simply multiplication. We wish to define a representation for $\sps{X}$. To do this, we take the the semigroup of contractions $\setind{t}{s}{\sgp{S}}\subseteq\bdd{H}$ from Corollary \ref{example}. We also assume that $t_s\neq 0$ for all $s\in\sgp{S}$ (otherwise, we could take $t_s\oplus I_{\hilb{K}}$ for some Hilbert space $\hilb{K}$, and the resulting semigroup would still not have an isometric co-extension). A representation $T$ of $\sps{X}$ will be uniquely defined by saying who is $T_s(1)$, for all $s$. For $e$, we obviously pick $T_e(1) = I_{\hilb{H}}$, and for any other $s$ we pick $T_s(1) = t_s$.
	
	We notice that $\sps{X}_s\otimes\hilb{H} = \mathbb{C}\otimes\hilb{H}\cong\hilb{H}$, and under this identification, $\tilde{T}_s = t_s$ for all $s\neq e$. Thus $(T_e,T_s)$ is completely contractive for all $s$. The other properties we would demand from a representation are also easy to verify. We define $\varphi$ to be the CP-semigroup given by $\varphi_s(a) = \tild{T}_s\circ(I_{\mathbb{C}}\otimes a)\circ \tild{T}^*_s$. It is easy to check that $\varphi$ is in fact given by $\varphi_s(a) = t_sat^*_s$. Our aim now is to show that $\varphi$ cannot have an E-dilation.
	
	$T_s$ is injective for all $s$ (since we ensured $t_s\neq 0$), thus from Theorem 2.6 in \cite{Shalit2009}, we get that $\sps{X}$ is in fact isomorphic to the Arveson-Stinespring subproduct system associated with $\varphi$ (with $T$ corresponding to the identity representation via this isomorphism). If $\varphi$ had an E-dilation $\psi$, then by Theorem 5.12 in \cite{Shalit2009}, the Arveson-Stinespring subproduct system and identity representation $(\sps{Y'},R',\hilb{K'})$ associated with $\psi$ would be (isomorphic to) an isometric dilation of $(\sps{X},T,\hilb{H})$. But $(\sps{X},T,\hilb{H})$ cannot have an isometric dilation: if indeed $(\sps{Y},R,\hilb{K})$ is such a dilation, we may pick for any $s$, $v_s = R_s(1)$ (where by $1$, we mean here the element $1\in\mathbb{C} = \sps{X}_s\subseteq\sps{Y}_s$). It is easy to see that $\setind{v}{s}{\sgp{S}}$ would form a semigroup of isometries, co-extending $t_s$ for all $s$. Thus we reach a contradiction.
	
	\end{proof}
	
	We note that in \cite{Shalit2009}, Shalit and Solel manage to give another counter example for a 3-cycle, which also remains a counter example under rescaling. That is, a CP-semigroup $\theta = \set{\theta_n}_{n\in \mathbb{N}^3}$, such that $\lambda\theta$ has no E-dilation, for any $\lambda>0$. They do this, roughly, by exhibiting a subproduct system indexed by $\mathbb{N}^3$, which cannot be embedded in a product system, and taking the CP-semigroup induced by its shift representation. We do not know if an example which is invariant to rescaling exists for a graph with cycles strictly larger then 3, but we can see that the same approach cannot work in this case, in light of Theorem \ref{mainthm3}. That is, those subproduct systems which arise in the case of a graph with cycles strictly larger than 3, will always be embeddable in a product system.
	
\section*{Appendix} \label{appendix}
	It is our goal to give a proof for Lemma \ref{combilemma}. Let us first recall the setup. We had a graph $\gr{G}$ with no 3-cycles, and $\sgp{S}=\sgp{S}(\gr{G})$ the free unital semigroup with relations in $\gr{G}$, with alphabet $\alp{A} = V(\gr{G})$. Let $\setind{E}{a}{\alp{A}}$ be a collection of $W^*$ correspondences over some von Neumann algebra, and $\set{f_{s,t}:\set{s,t}\in E(\gr{G})}$ a unitary flip system for it. We are able, for any two presentations of a word $s=s_1\dots s_n = t_1\dots t_n$, to define a map $f:E_{s_1}\dotimes E_{s_n}\rightarrow E_{t_1}\dotimes E_{t_n}$ by composing maps of the form $I\otimes f_{a,b}\otimes I$ in some order, corresponding to some way of rewriting one presentation into the other. We will make this more precise as we attempt to prove the aforementioned lemma:
	\begin{Lemma*}
	The map $f$ defined above is unique, regardless of the order of compositions.
	\end{Lemma*}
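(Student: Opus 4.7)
My plan is to treat this as a coherence theorem: to each sequence of commutation rewrites from $(s_1,\dots,s_n)$ to $(t_1,\dots,t_n)$ I associate the composition of the corresponding tensored flip maps $I^{\otimes(i-1)}\otimes f_{a,b}\otimes I^{\otimes(n-i-1)}$, and then show that this composition is invariant under the local modifications of rewriting sequences that generate the equivalence ``same source, same target.'' The uniqueness of $f$ then follows immediately.

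The two relevant local modifications are: (i) canceling two consecutive swaps at the same position, and (ii) reordering two consecutive swaps at positions differing by at least $2$. Both preserve the composed map. For (i), the identity $f_{a,b}^*=f_{b,a}$ together with unitarity gives $f_{b,a}\circ f_{a,b}=I$. For (ii), the flips act on disjoint tensor factors and hence commute. In an arbitrary trace monoid one also needs a third ``braid'' modification, exchanging a triple of swaps at positions $(i,i+1,i)$ with a triple at $(i+1,i,i+1)$; but for both triples to be valid sequences of rewrites, the three letters at positions $i,i+1,i+2$ must pairwise commute, and so they would span a $3$-cycle in $\gr{G}$. Since $\gr{G}$ has no $3$-cycles, this modification never applies and no braid-type relation on the flip maps is required. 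This is the sole point at which the hypothesis on $\gr{G}$ is used.

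The main obstacle, and the reason the appendix is long, is establishing that modifications (i) and (ii) really do generate all equivalences of rewriting sequences with identical endpoints, under the no-$3$-cycle hypothesis. I would argue by induction on the total length of two given sequences $\sigma,\tau$ with common endpoints: compare the first move of each, and use (i) and (ii) to align them so that the pair of first moves cancels, then apply the induction hypothesis to the shorter pair. The subtle case is when the first moves of $\sigma$ and $\tau$ act at overlapping positions, say $i$ and $i+1$; the absence of a $3$-cycle forces the three letters at positions $i,i+1,i+2$ to fail to pairwise commute. Combined with the fact that in a trace monoid the relative order of any two non-commuting letter occurrences is preserved across all presentations, this rigidly constrains how both $\sigma$ and $\tau$ can continue, allowing the matching to proceed using only (i) and (ii). Carrying out this case-analysis in full generality, and verifying that the induction closes, is the combinatorial heart of the argument.
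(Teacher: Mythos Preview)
Your plan is sound and takes a genuinely different route from the paper. You cast the lemma as a coherence statement---two rewriting sequences with the same endpoints are related by (i) cancellation of inverse swaps and (ii) interchange of far-apart swaps---and you correctly observe that the only other Tits-type move, the hexagon $(i,i+1,i)\leftrightarrow(i+1,i,i+1)$, would require a $3$-clique in $G$ and hence never arises; this makes the role of the hypothesis very transparent. The paper instead reformulates the claim as ``every cycle in the presentation graph $H(s)$ induces the identity'' and runs a double induction on the length of $s$ and on the length of the cycle. Its inductive step tracks the \emph{last letter} of the base vertex along the cycle: if that letter never moves, strip it off and recurse on word length; if it does move, locate the first edge where it moves left ($uba\to uab$) and the last edge where it moves back ($u'ab'\to u'b'a$), use the no-$3$-cycle hypothesis to force $b=b'$, and then insert minimal connecting paths to cut the cycle into two strictly shorter cycles plus one explicit commuting square $(f_q^{-1}\otimes I)\circ(I\otimes f_{b,a})\circ(f_q\otimes I)\circ(I\otimes f_{a,b})=I$. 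Your approach is closer in spirit to standard word-problem arguments for Artin-type monoids and isolates the hypothesis cleanly; the paper's last-letter decomposition is more self-contained and sidesteps the ``overlapping first moves'' case analysis that you (rightly) flag as the combinatorial heart of your induction but do not carry out in detail.
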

	Before proving the lemma, we will introduce some notions to aid us, both in precisely stating what it is we wish to prove, and in proving it.
	\begin{secDef}
	For a word $s\in\sgp{S}$, we will define the graph $\gr{H}(s)$ (or just $\gr{H}$ when the identity of $s$ is clear from the context) to be the following graph: $V(\gr{H}(s))$ will be the set of presentation of $s$ (that is, just $s$, when thought of as an equivalence class of words in the free unital semigroup generated by $V(\gr{G})$). Any two presentations $v,u$ will be connected by an edge if and only if they differ by a single application of a commutation relation of $\gr{G}$. Concretely, if they are of the form $v = wabw'$, $u = wbaw'$, where $\set{a,b}\in E(\gr{G})$.
	\end{secDef}
	To any vertex $v = t_1\dots t_n\in \gr{H}$, we will couple the space $E_v:= E_{s_1}\dotimes E_{s_n}$. For every (not necessarily simple) path $p=(v_1,\dots v_n)$ in $\gr{H}$ we will define a map $f_p$, in the following way: for a 2-path $(v,u)$, supposing $v = wabw'$, $u = wbaw'$, we will define $f_{(v,u)}:= I_{E_w}\otimes f_{ab}\otimes I_{E_w'}$ (where if $w$ or $w'$ are the empty word, we remove either $I_{E_w}$ or $I_{E_w'}$ from the definition). For a general path $p=(v_1,\dots v_n)$ we define $f_p:= f_{(v_{n-1},v_n)}\circ\dots\circ f_{(v_1,v_2)}:E_{v_1}\rightarrow E_{v_n}$. For the empty path we will define $f = I$.
	
	Now the content of the lemma can be stated in a clearer fashion: for any two paths $p_1,p_2$ which share a starting and ending vertex, we have $f_{p_1} = f_{p_2}$. Taking into account that for any path $p$ and its reverse $p^r$, we have $f_{p^r} = f_{p}^* = f_{p}^{-1}$, it is easy to see that an equivalent reformulation is the following:
	\begin{secLemma}\label{apenLemma}
	For any cycle $c$ in $\gr{H}$, $f_c$ is the identity map.
	\end{secLemma}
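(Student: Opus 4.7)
My strategy is to show that every cycle in $\gr{H}(s)$ decomposes — as a composition of flip-maps — into a product of very simple ``elementary'' cycles on which $f$ is manifestly the identity. The 3-cycle-free hypothesis on $\gr{G}$ is the decisive combinatorial ingredient: it eliminates the only other potentially problematic class of elementary cycles (the braid hexagons), on which the equality $f_c = I$ would require a Yang--Baxter type relation that we do not have.

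\textbf{Step 1 (classification of elementary cycles).} I would argue that the minimal non-trivial cycles that can arise in $\gr{H}(s)$ come in two flavors. The first is a \emph{backtrack} $(v,u,v)$, which gives $f_c = f_{(u,v)} \circ f_{(v,u)} = I$ because $f_{a,b}$ is unitary with inverse $f_{b,a}$. The second is a \emph{disjoint-swap square}: starting from $v = w_0 a b w_1 c d w_2$ with $\{a,b\}, \{c,d\} \in E(\gr{G})$, one performs the commutations at the two non-overlapping positions in both orders, closing up a 4-cycle. For such a square, the two elementary flips act on disjoint tensor factors, so they commute on the nose, and $f_c = I$ is immediate. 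In principle, a third class could exist — a \emph{braid hexagon} running through the six rearrangements of a block $abc$ of three consecutive, pairwise commuting letters — but the existence of such a block requires $\{a,b\}, \{b,c\}, \{a,c\}$ all to be edges of $\gr{G}$, i.e., a 3-cycle in $\gr{G}$. By hypothesis this does not occur.

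\textbf{Step 2 (reduction of arbitrary cycles to elementary ones).} The heart of the argument is to show that every cycle in $\gr{H}(s)$ can be obtained, via finitely many insertions/deletions of backtracks and finitely many replacements of one side of a disjoint-swap square by the other, from the trivial cycle. I would prove this by induction on the length $n$ of the word $s$; the base cases $n \le 2$ are vacuous since $\gr{H}(s)$ has at most one vertex. For the inductive step, fix a reduced cycle $c = (v_1, \ldots, v_m = v_1)$ and track the position in $v_i$ occupied by a distinguished occurrence of a letter (for instance, the letter that sits at position $1$ of $v_1$). Each edge of the cycle either leaves this position fixed, or shifts it by $\pm 1$; since the cycle closes up, the net displacement is zero. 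Using disjoint-swap squares, one can ``homotope'' $c$ so that this tracked letter stays at position $1$ throughout; the remaining cycle then lives in $\gr{H}(s')$ for the shorter word $s'$ obtained by deleting the first letter, and the induction hypothesis finishes the job. The crucial point is that the 3-cycle-free assumption on $\gr{G}$ prevents the tracked letter from ever needing to ``hop over'' two adjacent letters that also commute with it — which is exactly the configuration that would produce a braid hexagon.

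\textbf{Main obstacle.} The conceptual content is in Step 2: justifying the homotopy-through-squares reduction rigorously. The technical challenge is the case analysis for how a single commutation move affects the tracked position — one must check that whenever the tracked letter participates in a swap (so the move is $\ldots a x \ldots \leftrightarrow \ldots x a \ldots$ at the tracked position), the subsequent swaps required to return the tracked letter to position $1$ can always be carried out; and here the absence of 3-cycles in $\gr{G}$ is what rules out the obstructing configurations. Once this combinatorial reduction is in place, the lemma follows immediately because each elementary cycle evaluates to the identity by Step 1.
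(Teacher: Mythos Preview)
Your overall strategy is sound and would work, but it is organized differently from the paper's proof. You first classify the elementary cycles (backtracks, disjoint-swap squares, and the absent braid hexagons) and then argue that an arbitrary cycle can be homotoped into a product of these by tracking a fixed letter and inducting on word length. The paper instead runs a \emph{double} induction, on the word length $n$ and on the cycle length $m$, and never explicitly classifies elementary cycles: given a cycle, it locates the first place the last letter $a$ is swapped away ($uba\to uab$) and the last place it is swapped back ($u'ab'\to u'b'a$), argues $b=b'$ (since otherwise $a,b,b'$ form a 3-cycle in $\gr{G}$), inserts shortest paths $q$ that keep the last two letters frozen, and thereby cuts the cycle into two strictly shorter cycles plus one explicit ``square'' $(f_q^{-1}\otimes I)\circ(I\otimes f_{b,a})\circ(f_q\otimes I)\circ(I\otimes f_{a,b})=I$. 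So the paper's decomposition \emph{is} the concrete mechanism you flag as your ``main obstacle'': it shows exactly how the tracked letter's excursion can be contracted, and the extra induction on $m$ is what makes the contraction terminate. Your framing has the advantage of making the connection to standard coherence arguments for partially commutative monoids transparent; the paper's has the advantage of being self-contained and avoiding any appeal to a general homotopy-reduction principle.

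Two small points. Your claim that the base cases $n\le 2$ are vacuous because $\gr{H}(s)$ has at most one vertex is not quite right: for $n=2$ with commuting letters, $\gr{H}(s)$ has two vertices and one edge, though indeed no nontrivial cycles. And your induction is only on $n$, whereas the homotopy step (replacing one side of a square by the other, or cancelling a backtrack) does not decrease $n$; you would need a secondary well-ordering (effectively the paper's cycle-length induction, or a count of how far the tracked letter strays) to ensure the reduction terminates.
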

	Before proving this, we need to note the following observation:
	\begin{secObs}\label{apendixObs}
	For two vertices $v_1,v_2$ in $\gr{H}$, if $v_1 = uw$, $v_2 = u'w$, then $u$ and $u'$ are equivalent modulo the commutation relations in $\gr{G}$. That is, their equivalence classes are equal in $\sgp{S}$ (This is really just saying that $\sgp{S}$ is a cancellative semigroup). Furthermore, the shortest path between $v_1$ and $v_2$ does not change $w$, that is, it consists only of vertices of the form $u''w$, and can be thought of as being induced by a path in $\gr{H}(u)$, connecting $u$ and $u'$.
	\end{secObs}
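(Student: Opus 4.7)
My plan is to handle both parts simultaneously via a labeled-occurrence analysis. The starting point is that $\gr{G}$ has no self-loops, so no commutation rule allows two occurrences of the same letter to be swapped. Consequently, within any presentation of $s$ the $i$-th left-to-right occurrence of a letter $c$ (call it $c^{(i)}$) is canonical: the labeling $c^{(1)}, c^{(2)}, \dots$ is invariant under applying commutation relations. Every presentation of $s$ is thus a linear ordering of this fixed set of labeled occurrences respecting the constraint that $c^{(i)}$ precedes $c^{(i+1)}$ for each letter $c$.

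With this framework, the first claim (cancellativity, $u \sim u'$) reduces to showing that the last $|w|$ positions of any presentation of the form $u''w$ contain a canonical set of labeled occurrences. Indeed, for each letter $c$, the suffix $w$ contributes some fixed number $m_c$ of copies of $c$; since same-letter occurrences must appear in the canonical left-to-right order, these $m_c$ copies are forced to be the top $m_c$ copies of $c$ overall. This set, which I will call the set of $w$-copies, is therefore the same whether we consider $v_1 = uw$ or $v_2 = u'w$. The complementary set of $u$-copies is also the same for $v_1$ and $v_2$, and reading the first $|u|$ positions of $v_1$ and of $v_2$ gives two presentations of the sub-trace determined by the $u$-copies together with the inherited commutation constraints. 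This sub-trace is precisely $u$, so $u \sim u'$ in $\sgp{S}(\gr{G})$.

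For the second claim, I classify each commutation move along a path in $\gr{H}(s)$ as a $u$-swap (exchanging two $u$-copies), a $w$-swap (two $w$-copies), or a boundary swap (one of each). Only $u$-swaps alter the induced relative ordering of the $u$-copies, and each such swap corresponds to a valid edge of $\gr{H}(u)$: two adjacent $u$-copies with commuting labels are necessarily adjacent in the extracted $u$-subsequence as well. Hence the number of $u$-swaps along any $v_1$-to-$v_2$ path is at least the graph distance from $u$ to $u'$ in $\gr{H}(u)$, giving a lower bound on the total path length. The matching upper bound comes from executing any shortest $u$-to-$u'$ path in $\gr{H}(u)$ inside the $u$-part of $uw$, producing a $v_1$-to-$v_2$ path in $\gr{H}(s)$ of the same length that preserves the suffix $w$. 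Consequently any shortest path from $v_1$ to $v_2$ must consist entirely of $u$-swaps, never moves a $w$-copy, and so remains in the subgraph of vertices of the form $u''w$, which is canonically identified with $\gr{H}(u)$.

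The main delicate point is establishing the canonicity of the $w$-copies, since this is what legitimizes the identification with $\gr{H}(u)$ and makes both the cancellativity and the distance lower bound come out cleanly. If one tried to bypass the labeled-occurrence bookkeeping, one would be led to invoke trace-poset theory or Foata normal forms; the advantage of the present approach is that it uses only the elementary observation that same-letter occurrences can never be commuted, so their left-to-right order is an invariant of $\sim$.
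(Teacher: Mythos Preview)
Your proof is correct and considerably more explicit than the paper's, which dispatches the observation in two sentences: induct on $|w|$, and for $|w|=1$ note that any path moving the final letter can be shortened by dropping those edges. You instead handle arbitrary $w$ in one shot via the labeled-occurrence framework, classifying each move along a path as a $u$-swap, $w$-swap, or boundary swap and projecting onto the $u$-swaps to obtain a path in $\gr{H}(u)$.

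What your route buys: it makes precise the invariant that the paper leaves implicit --- same-letter occurrences never swap, so their left-to-right numbering is canonical --- and it delivers the exact equality $d_{\gr{H}(s)}(v_1,v_2)=d_{\gr{H}(u)}(u,u')$ together with the stronger statement that \emph{every} shortest path stays inside the $u''w$ subgraph. The paper's inductive reduction is terser but quietly relies on the same point you articulate: when the final letter $w$ appears several times in $s$, one must know which occurrence is ``the $w$'' in order to speak of edges that move it, and your labeling is exactly what makes that unambiguous.

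One small remark on exposition: in your second paragraph you appeal to ``the sub-trace determined by the $u$-copies'' as though it were already a well-defined element of $\sgp{S}(\gr{G})$, but that well-definedness is really what your third paragraph establishes (projecting any $v_1$-to-$v_2$ path onto its $u$-swaps yields a path from $u$ to $u'$ in $\gr{H}(u)$). You might either reorder the two paragraphs or add a sentence flagging that the cancellativity claim is completed by the path-projection argument that follows.
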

	This is clear. In the case of $w$ being a letter, it is easy to see that in any path between $v_1$ and $v_2$ one can just ignore the edges involving any commutation relation that has to do with $w$, and build a new, shorter path, which does not move $w$. If $w$ is not a letter we may proceed by induction.
	
	We are now ready to begin proving Lemma \ref{apenLemma}.
	\begin{proof}
	The proof will be by induction, first on the length of the word $s$, which we denote by $n$, and further on the length of cycles in $\gr{H}(s)$, which we will denote by $m$. We see that for $n=0,1$ the statement is vacuous: the graph $\gr{H}$ in those cases is one with a single vertex. That will serve as the basis for our induction on $n$. Let $s\in\sgp{S}$ be an element of length $n$, and assume that for any word of length less than $n$ the statement is true. 
	
	The basis for the induction on lengths of cycles in $\gr{H}=\gr{H}(s)$ is clear, as the lemma holds trivially for the empty path. If this is not convincing, one can see also that there are no cycles of length $1$, and that any cycle of length $2$ is an application of the same commutation relation twice, back and forth. What we mean is that it is of the form $ab\rightarrow ba\rightarrow ab$, which obviously induces the identity map. So we assume the lemma is true for cycles of length less than $m$. 
	
	Let $c = (v_1,\dots v_m,v_1)$ be a cycle in $\gr{H} = \gr{H}(s)$ of length $m$. We wish to somehow "factor" it into smaller cycles, on which we can apply the induction hypothesis. We note that if the last letter did not change throughout $c$, that is, $v_i$ is always of the form $u_ia$ for a fixed letter $a$, then it is clear that $f_c = f_{c'}\otimes I_{E_a}$, where $c'$ is a cycle in $\gr{H}(u_1)$, and thus this follows from induction on $n$. 
	
	Thus we are left with the case where the last letter of $v_1$ does change. Let us denote it by $a$. Then we may pick vertices $w_1,w_2$ to be the first instance in $c$ where $a$ is moved backwards. That is, $w_1 = uba$ and $w_2 = uab$ (where $b$ is a letter, and $u$ is a word). In the same manner, we may pick $w_3,w_4$ to be the last instance the reverse happens, so that $w_3 = u'ab'$, $w_4 = u'b'a$, and for any vertex after $w_4$, $a$ is the last letter. we give names $p_1,p_2,p_3,e_1,e_2$ to the paths connecting these vertices in $c$, in a manner best described by a diagram.
	\begin{center}
	\begin{tikzpicture}[very thick,scale=2]
	\node (v1) at (0, 0) {$v_1$};
	\node (w1) at (2, 1.5) {${w_1}$};
	\node[below] (uw1) at (w1) {$uba$};
	\node (w2) at (4, 1.5) {$w_2$};
	\node[below] (uw2) at (w2) {$uab$};
	\node (w3) at (4, -1.5) {$w_3$};
	\node[below] (uw3) at (w3) {$u'ab'$};
	\node (w4) at (2, -1.5) {$w_4$};
	\node[below] (uw4) at (w4) {$u'b'a$};
	\node (aux1) at (6,0) {};
	\node[right] (aux2) at (aux1) {$p_2$};
	\draw[->] (v1)  to [out=90, in=180] node[above]{$p_1$} (w1);
	\draw[->] (w1) to  node[above]{$e_1$} (w2);
	\draw[->] (w2) to [out=0, in=90]  (aux1) to [out=-90, in=0] (w3);
	\draw[->] (w3) to node[below]{$e_2$} (w4);
	\draw[->] (w4) to [out=-180, in=-90] node[below]{$p_3$} (v1);
	\end{tikzpicture}
	\end{center}
	
	We first note that it might be the case that $w_2 = w_3$. The treatment of this case turns out to be easier, but we postpone it, and assume for now that they are different. We first claim that it must be the case that $b=b'$. Assuming otherwise, we clearly have that $a$ commutes with both $b$ and $b'$. It is also clear that $b$ and $b'$ must commute, as they must "pass" each other with a commutation relation somewhere along the path $p_2$. But then we get a 3-cycle in $\gr{G}$, which is a contradiction. 
	
	We may also note, by Observation \ref{apendixObs}, that $u$ and $u'$ are connected by a path in $\gr{H}(u)$. We take $q$ to be the minimal path from $u$ $u'$, so $q$ and $q^r$ will induce paths in $\gr{H}$: $w_4\overset{q_{4,1}}{\longrightarrow} w_1$, $w_2\overset{q_{2,3}}{\longrightarrow} w_3$. These paths have equal length (equal to the length of $q$), which is minimal, again by Observation \ref{apendixObs} and the fact that we took $q$ to be minimal. This is a good place to update our diagram with this new information.
	
	\begin{center}
	\begin{tikzpicture}[very thick,scale=2]
	\node (v1) at (0, 0) {$v_1$};
	\node (w1) at (2, 1.5) {${w_1}$};
	\node[below] (uw1) at (w1) {$uba$};
	\node (w2) at (4, 1.5) {$w_2$};
	\node[below] (uw2) at (w2) {$uab$};
	\node (w3) at (4, -1.5) {$w_3$};
	\node[below] (uw3) at (w3) {$u'ab$};
	\node (w4) at (2, -1.5) {$w_4$};
	\node[below] (uw4) at (w4) {$u'ba$};
	\node (aux1) at (6,0) {};
	\node[right] (aux2) at (aux1) {$p_2$};
	\draw[->] (v1)  to [out=90, in=180] node[above]{$p_1$} (w1);
	\draw[->] (w1) to  node[above]{$e_1$} (w2);
	\draw[->] (w2) to [out=0, in=90]  (aux1) to [out=-90, in=0] (w3);
	\draw[->] (w3) to node[below]{$e_2$} (w4);
	\draw[->] (w4) to [out=-180, in=-90] node[below]{$p_3$} (v1);
	\draw[<-] (uw1) to node[right]{$q_{4,1}$}(w4);
	\draw[<-] (w3) to node[right]{$q_{2,3}$}(uw2);
	\end{tikzpicture}
	\end{center}
	
	We get that in particular, the paths $q_{4,1}$ and $q_{2,3}$ are shorter or equal in length to $p_1\circ p_3$ and $p_2$. Thus, the cycles $(p_1,p_3,q^r_{4,1})$ and $(p_2,q^r_{2,3})$ are strictly shorter than $c$ (they have length at most $m-2$, on account of the edges $e_1,e_2$ missing), and thus induce the identity map. It is evident that if we manage to also show that the cycle $d=(e_1,q_{3,2},e_2,q_{1,4})$ induces the identity map, we will be done. We recall that we denoted by $q$ the path from $u$ to $u'$ which induced $q_{1,4}$ and $q_{3,2}$, and see that 
	\begin{align*}
	f_d &= 
	f_{q_{1,4}}\circ f_{e_2}\circ f_{q_{3,2}}\circ f_{e_1}\\
	&=
	(f_q^{-1}\otimes I_{E_{ba}})\circ 
	(I_{E_{u}}\otimes f_{b,a})\circ 
	(f_q\otimes I_{E_{ab}})\circ 
	(I_{E_{u}}\otimes f_{a,b}) \\
	&= (f_q^{-1}\circ f_q)\otimes(f_{b,a}\circ f_{ab})\\
	&=I_{E_{w_1}}.
	\end{align*}
	So we are done, and it remains only to check the case where $w_2 = w_3$, which we postponed earlier. But in this case, it is clear that we also have $w_1 = w_4$, so the appropriate diagram is:
	
	\begin{center}
	\begin{tikzpicture}[very thick,scale=2]
	\node (v1) at (0, 0) {$v_1$};
	\node (w1) at (2, 0) {${w_1}$};
	\node (w2) at (4, 0) {$w_2$};
	\draw[->] (v1)  to [out=45, in=135] node[above] {$p_1$} (w1);
	\draw[<->] (w1) to node[above] {e}(w2);
	\draw[->] (w1) to [out=-135, in=-45] node[below] {$p_2$} (v1);
	\end{tikzpicture}
	\end{center}
	where $e$ is an edge, and thus the induction hypothesis is true for the cycle $(p_1,p_2)$, and the lemma follows immediately. This concludes the proof, and the appendix.
	\end{proof}

\bibliographystyle{plain}
\bibliography{myref}
\end{document}